\newtheorem{lemma}{Lemma}[section]
\newtheorem{theorem}[lemma]{Theorem}
\newtheorem{proposition}[lemma]{Proposition}
\newtheorem{corollary}[lemma]{Corollary}
\theoremstyle{definition}
\newtheorem{definition}[lemma]{Definition}
\newtheorem{example}[lemma]{Example}
\newtheorem{remark}[lemma]{Remark}
\numberwithin{equation}{section}
\newcommand{\comment}[1]{}
\newcommand{\cR}{{\mathcal R}}
\newcommand{\cE}{{\mathcal E}}
\newcommand{\cF}{{\mathcal F}}
\newcommand{\R}{{\mathbb R}}
\newcommand{\N}{{\mathbb N}}
\newcommand{\vr}{{\varrho}}
\newcommand{\av}[1]{\left\Vert #1\right\Vert}
\newcommand{\Hm}[1]{\leavevmode{\marginpar{\tiny%
$\hbox to 0mm{\hspace*{-0.5mm}$\leftarrow$\hss}%
\vcenter{\vrule depth 0.1mm height 0.1mm width \the\marginparwidth}%
\hbox to 0mm{\hss$\rightarrow$\hspace*{-0.5mm}}$\\\relax\raggedright
#1}}}
\begin{document}

\title[Nonlinear Resistance forms]{Nonlinear resistance forms}

%

\author{Simon Puchert}
\address{S. Puchert, Mathematisches Institut \\Friedrich-Schiller-Universität Jena \\07743 Jena, Germany } \email{simon.puchert@uni-jena.de}

\author{Marcel Schmidt}
\address{M. Schmidt, Mathematisches Institut, Universität Leipzig, 04109 Leipzig, Germany.} \email{marcel.schmidt@math.uni-leipzig.de}

\maketitle

\begin{abstract}
In this paper we introduce the notion of nonlinear resistance forms. We define a $1$-parameter family of nonlinear resistance metrics and show their additivity over serial circuits. Moreover, we prove that resistance forms and $p$-resistance forms fall into our framework. 
\end{abstract}




\section{Introduction}

Resistance forms were introduced by Kigami \cite{Kig01,Kig03} as an abstract framework to study Laplacians and Brownian motion on fractals. For an excellent account on how they were conceived from previous attempts on defining Laplacians and Brownian motion on special classes of fractals, we refer to the introduction of \cite{KS25}.

Resistance forms  can be seen as a natural generalization (or a limit) of quadratic  energies on (infinite) discrete networks, whose connection to Markov chains and electrical networks is well-known, see e.g. \cite{DS84,Soa94} and references therein. Nonlinear versions of these discrete network energies were studied \cite{MS90,Soa93,Soa93a,Soa94,Kas16,Kas20,HKO21}, but it seems that the momentum of this direction of research has been somewhat limited  thus far. In contrast, recent years have seen a surge of publications on $p$-energies on fractals \cite{HPS04,CGQ22,BL23,Kig23,GYZ23,Bau24,CGYZ24,Shi24,AB25,MS25} (this list is not comprehensive), which are a $p$-homogeneous version of Kigami's resistance forms that lead to  $p$-Laplacians on fractals. Based on these works, in \cite{KS25} Kajino and Shimizu introduce the notion of $p$-resistance forms, which is aimed at providing an abstract framework for studying the aforementioned $p$-energies akin to the theory of Kigami's resistance forms. 

In this paper we introduce an even more general class of functionals, which we call {\em nonlinear resistance forms} (Definition~\ref{definition:nonlinear resistance form}), that encompasses all the functionals mentioned previously. Our aim is to single out  minimal assumptions on convex functionals (living on the space of all real-valued functions on a given set) such that they possess the following  three key features: 
\begin{enumerate}[(I)]
 \item Approximability by continuous convex functionals on function spaces over finite sets. 
 \item Compatibility with normal contractions. 
 \item The notion of a resistance metric that is additive over serial circuits. 
\end{enumerate}
  The tools that we use are mostly taken from abstract convex analysis. Once acquainted with them, they simplify several proofs and make them and some definitions more transparent, when compared to the literature on $p$-resistance forms. For $p$-resistance forms our approach also allows to treat the case $p = 1$, which is excluded in the literature. More generally, we can leave behind many remnants of the linear theory, such as e.g. homogeneity of the functionals. 
  
  Our nonlinear resistance forms come with two types of resistances, the {\em elementary resistance}, which is defined naively as in the linear case (Definition~\ref{definition:elementary resistance}),  and  the {\em $t$-resistance} (Definition~\ref{definition:resistance}), where $t > 0$ is a real parameter, which enjoys (III). The main insights that we prove in this text are the following: 
\begin{enumerate}[(A)]
 \item Property (I) is intimately linked to the lower semicontinuity of the functional, see Theorem~\ref{theorem:approximating forms} and Corollary~\ref{corollary:approximation via finite subsets}. Under some assumption on reflexivity and finiteness of the elementary resistance, lower semicontinuity (and hence approximability) can be characterized in terms of completeness of the form domain (or the modular space) with respect to the so-called {\em Luxemburg seminorm}, see Theorem~\ref{theorem:charcterization lower semicontinuity constant kernel} and Theorem~\ref{theorem:charcterization lower semicontinuity trivial kernel}.
 
 \item The triangle inequality for the family of $t$-resistances  is intimately linked to the compatibility with normal contractions (II), whose definition we took from \cite{Puc25}. We characterize this compatibility in Theorem~\ref{proposition:reduction contraction} and discuss  why it can be seen as the minimal compatibility with normal contractions that is required for proving  the triangle inequality for the $t$-resistance, see  Remark~\ref{remark:triangle inequality and contractions}.
 
 \item  In the $p$-homogeneous case, the $t$-resistance is just a power of the elementary resistance, see Theorem~\ref{theorem:homogeneous resistance}.   In particular, in the case of Kigami's resistance forms, our $t$-resistance coincides with his notion of resistance. 
 
 \item  The $t$-resistance is additive over serial circuits, where two ways of defining serial circuits in terms of the nonlinear resistance forms are considered, see Theorem~\ref{theorem:additivity serial circuits} and Theorem~\ref{theorem:additivity serial circuitsII}. Together with (B), this means that our $t$-resistance  satisfies (III) and besides (C), it is another justification for our definition. 
 
 \item We show that the previously considered nonlinear functionals on discrete spaces and fractals fall into our framework, see Section~\ref{section:examples}. In the case of $p$-resistance forms, this is actually quite nontrivial and relies on the results discussed in (A). 
\end{enumerate}

Our paper is organized as follows: In  Section~\ref{section:convex analysis} we review basics on convex analysis and characterize lower semicontinuity of convex functionals on locally convex topological vector spaces. In Section~\ref{section:elementary resistance} these abstract results are applied to convex functionals on $\cF(X)$, the space of all real-valued functions on $X$ equipped with the topology of pointwise convergence. In this context the elementary resistance is introduced. Section~\ref{section:nonlinear resistance forms} is the main part of our paper. It introduces nonlinear resistance forms and corresponding resistance metrics and discusses their properties. In Section~\ref{section:examples} we give several examples and show that Kigami's resistance forms and the $p$-resistance forms of Kajino and Shimizu fall into our framework.

%

\section{Some results in convex analysis}  \label{section:convex analysis}

\subsection{Convex functionals on locally convex spaces} \label{subsection:convex functionals}
In this subsection we recall some basic notions from convex analysis. For more background, see e.g. \cite{ET99,Zua02}. We are certain that  the results presented here are known but could not find references for all of them in the needed generality. For this reason and for the convenience of the reader we provide (most) proofs.

Let $V$ be an $\R$-vector space and let $f \colon V \to (-\infty,\infty]$. Its {\em effective domain} is $D(f) = \{x \in V \mid f(x) < \infty\}$ and we say that $f$ is {\em proper} if $D(f) \neq \emptyset$. 

In this subsection we assume that $\vr \colon V \to [0,\infty]$ is convex and satisfies $\vr(0) = 0$. In particular, it is proper. 

We say that $\vr$ is {\em symmetric} if $\vr(x) = \vr(-x)$ for all $x \in V$. For arbitrary  $\vr$ we define its {\em symmetrization} $\vr_s \colon V \to (-\infty,\infty]$ by
$$\vr_s(x) = \frac{1}{2} \left(\vr(x) + \vr(-x)\right).$$
 Since $\vr(0) = 0$, the functional $\vr_s$ is also proper. Moreover,  $D(\vr_s) = \{x \in D(\vr) \mid  -x \in D(\vr)\}$ and  $\vr_s = \vr$ if and only if $\vr$ is symmetric.  


The {\em modular cone} of $\vr$ is defined by
$$M(\vr) = \{x \in V \mid \lim_{\alpha \to 0+} \vr(\alpha x) =  0\}. $$
 The convexity of $\vr$ and $\vr(0) = 0$ imply $M(\vr) = \{\lambda x \mid \lambda \geq 0, x\in D(\vr)\}$.   On $M(\vr)$ the set $\{x \in V \mid \vr(x) \leq 1\}$ is absorbing. We denote the induced Minkowski functional by
$$\av{\cdot}_L = \av{\cdot}_{L,\vr} \colon M(\vr) \to [0,\infty),\quad \av{x}_L = \inf \{ \lambda > 0 \mid \vr(\lambda^{-1}x) \leq 1 \}$$
and call it the {\em Luxemburg functional} of $\vr$. By general theory it is sublinear (i.e. $\av{\lambda x}_L = \lambda \av{x}_L$ and $\av{x+y}_L \leq \av{x}_L + \av{y}_L$ for all $\lambda \geq 0$ and $x,y \in M(\vr)$) and the convexity of $\vr$ implies $\av{x}_L \leq 1 + \vr(x)$, $x \in M(\vr)$. If $\vr$ is symmetric, then $M(\vr)$ is a vector space and $\av{\cdot}_L$ is a seminorm, the so-called {\em Luxemburg seminorm} of $\vr$. For later purposes it is convenient to let $\av{x}_L = \infty$ for $x \in V \setminus M(\vr)$ such that $M(\vr)$ can be seen as the effective domain of $\av{\cdot}_L$. 

The functional $\vr$ is called {\em left-continuous} if 
$$\lim_{\lambda \to 1-} \vr(\lambda x) = \vr(x)$$
for all $x \in V$. In this case, it follows directly from the definition that
$$\{x \in V \mid \vr(x) \leq 1\} = \{x \in V \mid \av{x}_L \leq 1\}.$$
\begin{remark}
For $x \in V$ the inequality  $\limsup_{\lambda \to 1-} \vr(\lambda x) \leq \vr(x)$ is a consequence of the convexity of $\vr$ and $\vr(0) = 0$. Hence, for the functionals considered here, left-continuity is equivalent to {\em left-lower semicontinuity}, i.e.,  $\vr(x) \leq \liminf_{\lambda \to 1-} \vr(\lambda x)$ for all $x \in V$. Below we will consider lower semicontinuous $\vr$ with respect to vector space topologies on $V$, where the latter property is automatically satisfied. In this case, we will quite frequently use the identity of the unit balls of $\vr$ and $\av{\cdot}_L$.  
\end{remark}

Next we assume that we are given a Hausdorff locally convex topology $\mathfrak{T}$ on $V$. We say that $f \colon V \to (-\infty,\infty]$ is {\em lower semicontinuous} if its epigraph 
$${\rm epi}\,(f) =  \{(x,t) \in V \times \R \mid f(x) \leq t\}$$
is closed in $V \times \R$ (which is equipped with the product topology).  The standard characterization of closed sets via nets yields that the lower semicontinuity of $f$ is equivalent to the following: For each net $(x_i)$ in $V$ and $x \in V$ with $\lim x_i = x$ with respect to $\mathfrak{T}$ we have 
$$f(x) \leq \liminf f(x_i),$$
where $\liminf f(x_i) = \sup_i \inf_{i \prec j} f(x_j)$ and $\prec$ denotes the preorder of the index set of the net. 

If for every sequence $(x_n)$ with $\lim_{n \to \infty} x_n = x$ with respect to $\mathfrak{T}$ we have 
$$f(x) \leq \liminf_{n \to \infty} f(x_n),$$
then we call $f$ {\em sequentially lower semicontinuous}. Clearly, lower semicontinuity implies sequential lower semicontinuity. The converse holds e.g. if $\mathfrak{T}$ is metrizable but also in more general settings, see the discussion below.  

We denote the dual space of $(V,\mathfrak{T})$ by $V' = (V,\mathfrak{T})'$ and write 
$$(\cdot,\cdot) \colon V' \times V \to \R,\quad (\varphi,x) = \varphi(x)$$
for the dual pairing between $V'$ and $V$. Since $(V,\mathfrak{T})$ is Hausdorff and locally convex,  the Hahn-Banach theorem implies that $V'$ separates the points of $V$. Hence, we have  $(V',\sigma(V',V))' = V$, where $V$ is identified with $\{(\cdot,x) \mid x \in V\}$ and $\sigma(V',V)$ denotes the weak-*-topology on $V'$.

We define the {\em convex conjugate functional} $\vr^*$ of $\vr$  with respect to $\mathfrak{T}$ by
$$\vr^* \colon V' \to [0,\infty], \quad \vr^*(\varphi) = \sup \{(\varphi,x) - \vr(x) \mid x \in V\}.$$
Since $\vr$ is proper, the supremum in the definition can actually be taken over $x \in D(\vr)$. Moreover, $\vr \geq 0$ and $\vr(0) = 0$ imply $\vr^* \geq 0$ and $\vr^*(0) = 0$. As a supremum of $\sigma(V',V)$-continuous convex functionals, $\vr^*$ is a lower semicontinuous convex functional on $(V',\sigma(V',V))$.  If  $\vr$ is lower semicontinuous with respect to $\mathfrak{T}$, then $\vr^{**} = \vr$, where $V$ is identified with $(V',\sigma(V',V))'$. More precisely, for all $x \in  V$ we have
$$ \vr(x) = \sup \{(\varphi,x) - \vr^*(\varphi) \mid \varphi \in V'\}.$$
This is known as Fenchel-Moreau theorem, see e.g. \cite[Theorem 2.3.3]{Zua02}.  Because of this observation, in this text $\vr^{**}$ is always considered to be defined with respect to $\sigma(V',V)$ on $V'$.

The following elementary example is the prototype for the duality - it will be used below.

\begin{example}\label{example:basic duality}
 Let $V = \R$ such that $V' = \R$  (via the dual pairing $(s,t) = st$, $s,t \in \R$). For $1 \leq p < \infty$ consider the function $f_p\colon \R \to \R$, $f_p(t) = p^{-1} |t|^p$.  If $1 < p < \infty$ and $q^{-1} + p^{-1} = 1$, then $(f_p)^* = f_q$ and for $p = 1$ we have
 $$(f_1)^* \colon \R \to \{0,\infty\}, \quad (f_1)^*(t) = \begin{cases}
                 0 &\text{if }|t| \leq 1\\
                 \infty &\text{if } |t| > 1
                \end{cases}.
$$
\end{example}

Next we study a functional that is in some form of duality with the Luxemburg functional. More precisely, we consider the {\em Orlicz functional}  of $\vr$ defined by 
$$\av{\cdot}_O = \av{\cdot}_{O,\vr} \colon V \to [0,\infty],\quad \av{x}_O = \sup\{(\varphi,x) \mid \varphi \in V',  \vr^*(\varphi) \leq 1\}.$$

\begin{remark}
 \begin{enumerate}[(a)]
  \item The Luxemburg functional only depends on $\vr$, whereas $\vr^*$ and hence the Orlicz functional also depend on the choice of the locally convex topology on $V$.
  \item The names Luxemburg functional and Orlicz functional (respectively seminorm) are borrowed from the theory of Orlicz spaces and their generalizations (Musielak–Orlicz space etc.), see e.g. \cite{DHHR11}.  
 \end{enumerate}

\end{remark}
If $\vr$ is lower semicontinuous, then $\vr^{**} = \vr$  implies that the Orlicz functional of $\vr^*$ is given by
$$\av{\varphi}_{O,\vr^*} = \sup\{(\varphi,x) \mid x \in V, \vr(x) \leq 1\}.$$

The next proposition shows that $\av{\cdot}_O$ is finite on $M(\vr)$. For lower semicontinuous $\vr$ also the converse holds, i.e., $\av{x}_O < \infty$   implies $x \in M(\vr)$.   As for the Luxemburg functional, the Orlicz functional is sublinear on $M(\vr)$ and, if $\vr$ is symmetric, it is even a seminorm on $M(\vr)$  -  the so-called {\em Orlicz seminorm}.

\begin{proposition}[Fundamental inequalities]\label{proposition:fundamental inequalities}

 \begin{enumerate}[(a)]
  
  \item For all $x \in M(\vr)$ we have $\av{x}_O \leq 2\av{x}_L$. In particular, $\av{\cdot}_O$ is finite on $M(\vr)$. 
  
  \item If $\varphi \in M(\vr^*)$ and $x \in V$ with $\av{x}_{O,\vr} < \infty$, then 
  $$(\varphi,x) \leq \av{\varphi}_{L,\vr^*} \av{x}_{O,\vr}.$$
  \end{enumerate}
  If, additionally, $\vr$ is lower semicontinuous, then also the following hold.
  \begin{enumerate}[(a)] \setcounter{enumi}{2}
   
   \item Let $x \in V$. Then $\av{x}_O < \infty$ if and only if $x \in M(\vr)$. Moreover, for all $x \in M(\vr)$ we have $$\av{x}_L \leq \av{x}_O \leq 2\av{x}_L.$$
  
  \item  If $\varphi \in V'$ with $\av{\varphi}_{O,\vr^*} < \infty$ and $x \in M(\vr)$, then 
  $$(\varphi,x) \leq \av{\varphi}_{O,\vr^*} \av{x}_{L,\vr}.$$
 \end{enumerate}

\end{proposition}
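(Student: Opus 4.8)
The four bounds are the nonlinear counterparts of the classical duality between the Luxemburg and Orlicz (semi)norms, and the single engine behind all of them is the Young--Fenchel inequality $(\varphi,x) \le \vr(x) + \vr^*(\varphi)$, which is immediate from the definition of $\vr^*$, together with the scaling property $\vr(\lambda^{-1}x) \le (\mu/\lambda)\,\vr(\mu^{-1}x)$ valid whenever $0 < \mu \le \lambda$, a consequence of convexity and $\vr(0)=0$. The plan is to dispatch (a), (b) and (d) as essentially formal consequences of the definitions, and to isolate the lower bound $\av{x}_L \le \av{x}_O$ in (c) as the point where lower semicontinuity, via the Fenchel--Moreau identity $\vr^{**}=\vr$, is genuinely used; the same identity also underlies the lower semicontinuous representation $\av{\varphi}_{O,\vr^*} = \sup\{(\varphi,y) \mid \vr(y)\le 1\}$ that is invoked in (d).

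For (a) I would first record that $\vr(\lambda^{-1}x) \le 1$ for every $\lambda > \av{x}_L$: choosing $\mu \in [\av{x}_L,\lambda)$ with $\vr(\mu^{-1}x)\le 1$, the scaling property gives $\vr(\lambda^{-1}x) \le \mu/\lambda < 1$. Then for any $\varphi$ with $\vr^*(\varphi)\le 1$ the Young--Fenchel inequality yields $(\varphi,\lambda^{-1}x) \le \vr(\lambda^{-1}x) + \vr^*(\varphi) \le 2$; taking the supremum over such $\varphi$ gives $\av{x}_O \le 2\lambda$, and letting $\lambda \downarrow \av{x}_L$ proves $\av{x}_O \le 2\av{x}_L$, which is in particular finite on $M(\vr)$. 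Parts (b) and (d) are then pure unravelings. For (b), given $\varphi \in M(\vr^*)$ and $\mu > \av{\varphi}_{L,\vr^*}$ one has $\vr^*(\mu^{-1}\varphi)\le 1$ by the same scaling argument applied to $\vr^*$, so $\mu^{-1}\varphi$ is admissible in the defining supremum of $\av{x}_{O,\vr}$, whence $(\varphi,x) \le \mu\,\av{x}_{O,\vr}$; letting $\mu \downarrow \av{\varphi}_{L,\vr^*}$ gives the claim. For (d) one uses the lower semicontinuous form of $\av{\varphi}_{O,\vr^*}$ together with $\vr(\lambda^{-1}x)\le 1$ for $\lambda > \av{x}_{L,\vr}$: then $\lambda^{-1}x$ lies in $\{y \mid \vr(y)\le 1\}$, so $(\varphi,\lambda^{-1}x)\le \av{\varphi}_{O,\vr^*}$, and letting $\lambda \downarrow \av{x}_{L,\vr}$ concludes.

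The heart of the matter is the lower bound $\av{x}_L \le \av{x}_O$ in (c). Here I would fix a finite $\lambda > \av{x}_O$ and show $\vr(\lambda^{-1}x)\le 1$, since this forces $\av{x}_L \le \lambda$ and then $\lambda \downarrow \av{x}_O$ concludes. Using $\vr = \vr^{**}$ I write $\vr(\lambda^{-1}x) = \sup_{\varphi \in V'}\big[(\varphi,\lambda^{-1}x) - \vr^*(\varphi)\big]$ and bound each summand by a case split on the size of $\vr^*(\varphi)$. If $\vr^*(\varphi) \le 1$, then $\varphi$ is admissible in the supremum defining $\av{x}_O$, so $(\varphi,\lambda^{-1}x) - \vr^*(\varphi) \le (\varphi,\lambda^{-1}x) \le \av{x}_O/\lambda$. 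If $\vr^*(\varphi) = s \in (1,\infty)$, then $s^{-1}\varphi$ lies in $\{\vr^*\le 1\}$ by convexity, giving $(\varphi,\lambda^{-1}x) \le s\,\av{x}_O/\lambda$, so the summand is $\le s(\av{x}_O/\lambda - 1) < 0$ because $\av{x}_O/\lambda < 1$; the value $\vr^*(\varphi)=\infty$ contributes $-\infty$. Taking the supremum yields $\vr(\lambda^{-1}x) \le \av{x}_O/\lambda < 1$, as desired. The same estimate also gives the finiteness dichotomy: if $\av{x}_O < \infty$, it produces a finite $\lambda$ with $\lambda^{-1}x \in D(\vr)$, hence $x \in M(\vr)$, while the converse is contained in (a).

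The only genuine obstacle is this lower bound, and within it the case $\vr^*(\varphi) > 1$, where one must rescale $\varphi$ into the unit sublevel set of $\vr^*$ and then exploit $\av{x}_O/\lambda < 1$ to make the correction term negative. The remaining care is purely bookkeeping of the values $\pm\infty$ and of the convention $\av{x}_L = \infty$ for $x \notin M(\vr)$.
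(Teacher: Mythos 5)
Your proposal is correct and follows essentially the same route as the paper: (a) and (b) use the identical scaling and Young--Fenchel arguments, and (c) and (d) hinge, as in the paper, on the Fenchel--Moreau identity $\vr^{**}=\vr$ together with rescaling test functionals into the unit sublevel set of $\vr^*$. The only cosmetic difference is in (c), where the paper bounds each supremand $(\varphi,\lambda^{-1}x)-\vr^*(\varphi)$ by invoking part (b) and the standing inequality $\av{\cdot}_{L,\vr^*}\le 1+\vr^*$, whereas you obtain the same bound by a direct case split on $\vr^*(\varphi)\le 1$ versus $\vr^*(\varphi)>1$; this is a repackaging of the same idea rather than a different method.
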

 \begin{proof}
 (a):  Let $x \in M(\vr)$ and let $\varphi \in V'$ with $\vr^*(\varphi) \leq 1$. The definition of $\vr^*$ implies $(\varphi,y) - \vr(y) \leq 1$ for all $y \in V$. For $\lambda > \av{x}_L$ and $y = \lambda^{-1}x$ this inequality implies
  $$\lambda^{-1} (\varphi,x) = (\varphi,y) \leq 1 + \vr(y) \leq 2,$$
  where we used $\vr(y) = \vr(\lambda^{-1}x) \leq 1$ for the last inequality. Taking the supremum over all $\vr^*(\varphi) \leq 1$ and letting $\lambda \searrow \av{x}_L$ yields the claim. In particular, this shows the finiteness of $\av{\cdot}_O$ on $M(\vr)$. 
 
 (b):  Let $\lambda > \av{\varphi}_{L,\vr^*}$ and, therefore, $\vr^*(\lambda^{-1} \varphi) \leq 1$. We infer 
 $$(\varphi,x) = \lambda (\lambda^{-1}\varphi,x) \leq \lambda \av{x}_{O,\vr}.$$
 Letting $\lambda \searrow \av{\varphi}_{L,\vr^*}$ yields the claim.

 (c): The 'if part' of the statement and the second inequality were already proven in (a). 
 
 Let $x\in V$ and let $ \av{x}_{O,\vr} < \lambda < \infty$. Using $\vr = \vr^{**}$, (b) and $\av{\cdot}_{L,\vr^*} \leq 1 + \vr^*$ on $D(\vr^*)$, we infer
  \begin{align*}
   \vr(\lambda^{-1} x) &= \sup \{(\varphi,\lambda^{-1} x) - \vr^*(\varphi) \mid \varphi \in D(\vr^*)\}\\
   &\leq \sup \{\lambda^{-1} \av{\varphi}_{L,\vr^*} \av{x}_{O,\vr} - \vr^*(\varphi) \mid \varphi \in D(\vr^*)\} \\
   &\leq \sup \{ \av{\varphi}_{L,\vr^*}   - \vr^*(\varphi) \mid \varphi \in D(\vr^*)\}  \leq 1.
  \end{align*}
  This shows $x \in M(\vr)$, $\lambda \geq \av{x}_{L,\vr}$ and yields the desired inequality after letting $\lambda \searrow \av{x}_{O,\vr}$.

  (d):  This follows directly from (b) applied to $\vr^*$ and using $\vr = \vr^{**}$.
 \end{proof}

  If $\vr$ is lower semicontinuous and symmetric, then (c) of the previous proposition shows that the Luxemburg seminorm and the Orlicz seminorm are equivalent seminorms on $M(\vr)$. Hence, as sets the dual spaces  of $(M(\vr),\av{\cdot}_L)'$ and $(M(\vr),\av{\cdot}_O)'$ coincide and we simply write $M(\vr)'$ for them. However, they are equipped with different equivalent operator norms. We denote by $\av{\cdot}_{L'}$ the operator norm on $(M(\vr),\av{\cdot}_L)'$ and by $\av{\cdot}_{O'}$ the operator norm on $(M(\vr),\av{\cdot}_O)'$.

  \begin{corollary}[Duality of Luxemburg and Orlicz seminorms] \label{coro:dual spaces}
   Assume that $\vr$ is lower semicontinuous and symmetric. Then $M(\vr^*) \subset M(\vr)'$ and 
   $$\av{\varphi}_{O,\vr*} = \av{\varphi}_{L'}  \text{ and }\av{\varphi}_{L,\vr*} = \av{\varphi}_{O'} $$
   for all $\varphi \in M(\vr^*)$.
  \end{corollary}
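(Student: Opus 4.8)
The plan is to treat the membership statement and the two norm identities in turn, using the fundamental inequalities of Proposition~\ref{proposition:fundamental inequalities} together with the coincidence of the unit balls of $\vr$ and $\av{\cdot}_L$ that is available for lower semicontinuous $\vr$. First I would record that for $\varphi \in M(\vr^*)$ and $x \in M(\vr)$ part~(b) of Proposition~\ref{proposition:fundamental inequalities} gives $(\varphi,x) \le \av{\varphi}_{L,\vr^*}\av{x}_{O,\vr}$; since $\av{\cdot}_{O,\vr}$ is finite on $M(\vr)$ by part~(a), this says precisely that $\varphi$ restricts to a bounded linear functional on $(M(\vr),\av{\cdot}_O)$, hence, by the equivalence of the two seminorms, an element of $M(\vr)'$. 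This proves $M(\vr^*) \subset M(\vr)'$ and, as a byproduct, the inequality $\av{\varphi}_{O'} \le \av{\varphi}_{L,\vr^*}$.

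For the identity $\av{\varphi}_{O,\vr^*} = \av{\varphi}_{L'}$ I would simply compare the two defining suprema. Since $\vr$ is lower semicontinuous we have $\av{\varphi}_{O,\vr^*} = \sup\{(\varphi,x) \mid x \in V,\ \vr(x) \le 1\}$, as noted after the definition of the Orlicz functional. On the other hand, using that the unit ball is symmetric, $\av{\varphi}_{L'} = \sup\{(\varphi,x) \mid x \in M(\vr),\ \av{x}_L \le 1\}$. Because $\vr$ is lower semicontinuous, hence left-continuous, the two unit balls coincide, $\{\vr \le 1\} = \{\av{\cdot}_L \le 1\}$ (both lying inside $M(\vr)$), so the two suprema agree and the identity follows.

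The identity $\av{\varphi}_{L,\vr^*} = \av{\varphi}_{O'}$ is where the real work is. One inequality was already obtained above. For the reverse inequality $\av{\varphi}_{L,\vr^*} \le \av{\varphi}_{O'}$ I would argue by a Fenchel--Young estimate. The key auxiliary bound is
\[
\av{x}_O \le \vr(x) + 1 \qquad (x \in V),
\]
which follows from the inequality $(\varphi,x) \le \vr(x) + \vr^*(\varphi)$ built into the definition of $\vr^*$: taking the supremum over $\varphi$ with $\vr^*(\varphi) \le 1$ gives the claim. Now fix $s > \av{\varphi}_{O'}$; then $(\varphi,x) \le \av{\varphi}_{O'}\av{x}_O \le s\av{x}_O$ for every $x \in M(\vr) \supseteq D(\vr)$, so that $s^{-1}(\varphi,x) - \vr(x) \le \av{x}_O - \vr(x) \le 1$ for all $x \in D(\vr)$, and hence $\vr^*(s^{-1}\varphi) \le 1$. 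This yields $\av{\varphi}_{L,\vr^*} \le s$, and letting $s \searrow \av{\varphi}_{O'}$ finishes the argument; writing the estimate for every $s > \av{\varphi}_{O'}$ rather than for $s = \av{\varphi}_{O'}$ lets the degenerate case $\av{\varphi}_{O'} = 0$ be handled uniformly, without dividing by zero.

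The main obstacle is this last inequality. The earlier propositions only supply one-sided Hölder-type bounds, and recovering $\vr^*$ (equivalently the Luxemburg functional of $\vr^*$) from the operator norm of the Orlicz seminorm is exactly the point where one must feed the sharp Fenchel--Young bound $\av{x}_O \le \vr(x)+1$ back into the variational definition of $\vr^*$. Everything else is bookkeeping with the fundamental inequalities and the coincidence of the unit balls.
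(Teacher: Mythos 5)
Your proof is correct and follows essentially the same route as the paper: the first identity via the coincidence of the unit balls $\{\vr \leq 1\} = \{\av{\cdot}_L \leq 1\}$, and the reverse inequality for the second identity via the scaling argument $\vr^*(s^{-1}\varphi) \leq 1$ for $s > \av{\varphi}_{O'}$. Your auxiliary bound $\av{x}_O \leq \vr(x) + 1$ is exactly the estimate the paper establishes inline with an $\varepsilon$-argument (near-optimal $\psi$ plus Fenchel--Young), so the two proofs differ only in organization.
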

 \begin{proof}
  Using $\vr^{**} = \vr$ and the symmetry of $\vr$ and $\vr^*$, we obtain 
  $$\av{\varphi}_{O,\vr*} = \sup \{|(\varphi,x)| \mid x \in V, \vr(x) \leq 1\} = \sup \{|(\varphi,x)| \mid x \in V, \av{x}_L \leq 1\},$$
  where we used $\vr(x) \leq 1$ if and only if $\av{\cdot}_L \leq 1$ for the second equality. This shows $M(\vr^*) \subset M(\vr)'$ and the first identity for the operator norms. 
  
 As for the second equality, the previous proposition yields $\av{\varphi}_{O'} \leq \av{\varphi}_{L,\vr^*}$ and  it remains to prove the opposite inequality.  For $\lambda  > \av{\varphi}_{O'}$ we obtain  
 \begin{align*}
\vr^*(\lambda^{-1} \varphi) &= \sup \{ (\lambda^{-1} \varphi,y) - \vr(y) \mid y \in D(\vr)\}   \\
&\leq \sup \{\av{y}_{O,\vr}  - \vr(y) \mid y \in D(\vr)\}.
 \end{align*}
 Hence, it suffices to show that the right side of this inequality is less or equal than $1$.  For $y \in D(\vr)$ and $\varepsilon > 0$ we choose $\psi \in V'$ with $\vr^*(\psi) \leq 1$ and $\av{y}_O \leq (\psi,y) + \varepsilon$. Since by definition $\vr(y) \geq (\psi,y) - \vr^*(\psi) \geq (\psi,y) - 1$, we conclude 
 $$\av{y}_{O,\vr} - \vr(y) \leq (\psi,y) + \varepsilon - (\psi,y) + 1 \leq 1 + \varepsilon. $$
 Since $\varepsilon > 0$ was arbitrary, this yields the claim.   
 \end{proof}

 Under some assumptions on $\vr$, its effective domain is a cone. In this case, we have the useful identity $D(\vr) = M(\vr)$.   One such condition is discussed next.  The functional $\vr$ is said to satisfy the {\em $\Delta_2$-condition} if there exists $C \geq 0$ such that $\vr(2f) \leq C \vr(f)$ for all $f \in D(\vr)$. Then, by convexity $D(\vr)$ is a positive cone and we obtain $D(\vr) = M(\vr)$. 
 
 The $\Delta_2$-condition can be characterized with the help of the convex conjugate functional. To this end, we introduce the {\em $\nabla_2$-condition}, which is satisfied by $\vr$ if there exists $K > 2$ such that $K \vr(f) \leq \vr(2f)$ for all $f \in D(\vr)$. 
 
\begin{remark}
 Due to the convexity of $\vr$ and $\vr(0) = 0$, the constant $C$ in the $\Delta_2$-condition must satisfy $C \geq 2$ unless $\vr(x) =  0$ for all $x \in D(\vr)$. Moreover, by convexity and $\vr(0) = 0$, we have $K\vr(x) \leq \vr(2x)$ for all $0 \leq K \leq 2$. Hence, in the $\nabla_2$-condition, the assumption $K  > 2$ is essential.    
\end{remark}

 \begin{lemma}[Duality of the $\Delta_2$-condition and the $\nabla_2$-condition]\label{lemma:duality delta and nabla}
 \begin{enumerate}[(a)]
  \item   If $\vr$ satisfies the $\Delta_2$-condition, then $\vr^*$ satisfies the $\nabla_2$-condition.  
  \item If $\vr$ satisfies the $\nabla_2$-condition, then $\vr^*$ satisfies the $\Delta_2$-condition. 
  \item Assume that $\vr$ is lower semicontinuous. Then  $\vr$ satisfies the $\Delta_2$-condition if and only if $\vr^*$ satisfies the $\nabla_2$-condition and $\vr$ satisfies the $\nabla_2$-condition if and only if $\vr^*$ satisfies the $\Delta_2$-condition. 
 \end{enumerate}

 \end{lemma}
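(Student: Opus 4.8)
The three statements split into two independent one-sided estimates, (a) and (b), followed by a short duality bootstrap for (c). The guiding principle I would exploit is an asymmetry of the conjugate: a \emph{lower} bound on $\vr^*$ can be produced from a single cleverly chosen test vector, whereas an \emph{upper} bound on $\vr^*$ requires controlling the entire supremum. Accordingly, I would prove the $\nabla_2$-estimate (a) by a near-optimizer argument and the $\Delta_2$-estimate (b) by rescaling inside the supremum. Throughout I only use that $\vr^*$ is convex with $\vr^*(0)=0$ and $\vr^*\ge 0$, which holds for any $\vr$ as in the standing assumptions; lower semicontinuity enters only in (c).

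For (a), I may assume $C>2$, enlarging $C$ if necessary (since $\vr\ge 0$, the $\Delta_2$-condition with constant $C$ persists for every larger constant, in particular for $\max(C,3)$). Fix $\psi\in D(\vr^*)$ and, given $\eps>0$, pick $x\in D(\vr)$ with $(\psi,x)-\vr(x)\ge \vr^*(\psi)-\eps$. Testing the supremum defining $\vr^*(\psi)$ against the vector $2x$ gives $\vr(2x)\ge 2(\psi,x)-\vr^*(\psi)$, while the $\Delta_2$-condition gives $\vr(2x)\le C\vr(x)$; combining these with $(\psi,x)\ge \vr^*(\psi)-\eps+\vr(x)$ yields the lower bound $\vr(x)\ge (\vr^*(\psi)-2\eps)/(C-2)$. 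The point of this squeeze is that it forces the near-optimizer $x$ to carry a definite fraction of $\vr^*(\psi)$. Using $x$ as a test vector in $\vr^*(2\psi)\ge 2(\psi,x)-\vr(x)$ and letting $\eps\to 0$ then gives $\vr^*(2\psi)\ge (2+(C-2)^{-1})\vr^*(\psi)$, so $\nabla_2$ holds with $K=2+(C-2)^{-1}>2$.

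For (b), iterating $\nabla_2$ (and propagating $\infty$-values using convexity together with $\vr(0)=0$) yields $\vr(2^n z)\ge K^n\vr(z)$ for all $z$ and all $n\in\N$. Substituting $x=2^n z$ in the supremum defining $\vr^*(2\psi)=\sup_x[2(\psi,x)-\vr(x)]$ and applying this bound gives $\vr^*(2\psi)\le K^n\vr^*(c_n\psi)$ with $c_n=2^{n+1}K^{-n}$. Since $K>2$ we have $c_n\to 0$, so I fix $n$ with $c_n\le 1$; then convexity of $\vr^*$ together with $\vr^*(0)=0$ gives $\vr^*(c_n\psi)\le c_n\vr^*(\psi)\le \vr^*(\psi)$, whence $\vr^*(2\psi)\le K^n\vr^*(\psi)$. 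This is the $\Delta_2$-condition for $\vr^*$ with constant $C=K^n$.

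Finally, (c) is a formal consequence. Both (a) and (b) apply verbatim with $\vr$ replaced by $\vr^*$, since $\vr^*$ is again a nonnegative convex functional vanishing at $0$, now on $(V',\sigma(V',V))$, whose dual is $V$. If $\vr$ is lower semicontinuous then $\vr^{**}=\vr$ by Fenchel--Moreau, so applying (b) to $\vr^*$ gives the converse of (a), namely $\vr^*\in\nabla_2\Rightarrow \vr=\vr^{**}\in\Delta_2$, and applying (a) to $\vr^*$ gives the converse of (b), namely $\vr^*\in\Delta_2\Rightarrow \vr=\vr^{**}\in\nabla_2$; together with (a) and (b) these give both equivalences. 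The only genuinely delicate step I anticipate is the squeeze in (a): without the interplay of the convexity/optimality lower bound for $\vr(2x)$ against the $\Delta_2$ upper bound for $\vr(2x)$, one recovers only the trivial constant $K=2$, so it is precisely the extraction of a \emph{strictly} larger constant — together with the $\eps$-handling of suprema that need not be attained — where the argument must be carried out with care.
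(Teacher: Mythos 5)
Your proof is correct, and parts (b) and (c) essentially coincide with the paper's argument: for (b) the paper iterates the inequality $\vr^*((K/2)\varphi)\le K\vr^*(\varphi)$ on the dual side and then uses monotonicity along rays ($\vr^*(2\varphi)\le\vr^*((K/2)^n\varphi)$ once $(K/2)^n\ge 2$), which is the same computation as your primal iteration $\vr(2^nz)\ge K^n\vr(z)$ followed by $\vr^*(c_n\psi)\le\vr^*(\psi)$; for (c) both proofs invoke Fenchel--Moreau exactly as you do. The genuine difference is in (a). The paper stays entirely on the dual side: from the $\Delta_2$-condition it first derives the scaling inequality $\vr^*((C/2)\varphi)\ge C\vr^*(\varphi)$ by substituting $x\mapsto 2x$ inside the supremum, and then extracts the $\nabla_2$-condition from the convex-combination identity $\varphi=\lambda(2/C)\varphi+(1-\lambda)2\varphi$ with $\lambda=C/(2C-2)$, rearranging to obtain $\vr^*(2\varphi)\ge\frac{2C-3}{C-2}\,\vr^*(\varphi)$. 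You instead work primally with an $\eps$-near-maximizer $x$ of $\vr^*(\psi)$, squeezing $\vr(2x)$ between the conjugate lower bound $2(\psi,x)-\vr^*(\psi)$ and the $\Delta_2$ upper bound $C\vr(x)$ to force $\vr(x)\ge(\vr^*(\psi)-2\eps)/(C-2)$, and then re-testing $\vr^*(2\psi)$ with the same $x$. Amusingly, both routes produce the identical constant $K=2+(C-2)^{-1}=(2C-3)/(C-2)$. The paper's argument avoids any discussion of attainment and is shorter once the scaling inequality is in hand, though the gain over the trivial constant $2$ emerges somewhat opaquely from the algebra; your variational argument costs the $\eps$-bookkeeping (which you handle correctly, as you do the propagation of $\infty$-values in (b)), but it makes the mechanism visible: a near-optimizer for $\vr^*(\psi)$ must itself carry energy of order $\vr^*(\psi)/(C-2)$, and that surplus is exactly what lifts $K$ strictly above $2$.
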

\begin{proof}
 (a): Assume that $\vr$ satisfies the $\Delta_2$-condition, i.e., $\vr(2x) \leq C \vr(x)$ for all $f \in D(\vr)$. For $\varphi \in V'$ we obtain 
 \begin{align*}
 \vr^*((C/2)\varphi) &= \sup \{ ((C/2)\varphi, x) - \vr(x) \mid x \in V\}\\
 &= \sup \{ C(\varphi, x) - \vr(2x) \mid x \in V\}  \\
  &\geq C \vr^*(\varphi).
 \end{align*}
We can assume $C  > 2$  (else replace $C$ by a larger constant).  Then $\varphi = \lambda (2/C) \varphi  + (1-\lambda)2\varphi$ with $\lambda = C /(2C -2)$. Using this, $0 \leq \lambda \leq 1$ and $C \vr^*((2/C)\varphi) \leq \vr^*(\varphi)$, we obtain 
 $$\vr^*(\varphi) \leq \lambda \vr^*((2/C)\varphi) + (1-\lambda) \vr^*(2\varphi) \leq \lambda/C \vr^*(\varphi) + (1-\lambda)\vr^*(2 \varphi).$$
 Now assume that $2\varphi \in D(\vr^*)$ (else there is nothing to show). Using the convexity of $\vr^*$ and $\vr^*(0) = 0$,  we infer $\varphi \in D(\vr^*)$ and obtain the inequality 
 $$\frac{(1 - \lambda/C)}{1-\lambda} \vr^*(\varphi) \leq \vr^*(2\varphi).$$
 Since $(1 - \lambda/C)/(1-\lambda)  = (2C - 3)/(C-2) > 2$, we obtain (a).

 (b): Assume that $\vr$ satisfies the $\nabla_2$-condition, i.e., there exists $K  > 2$ with $\vr(2x) \geq K \vr(x)$ for all $x \in V$.   The same computation as in (a) yields $\vr^*((K/2) \varphi) \leq K \vr^*(\varphi)$. Since $K > 2$, we find $n \in \N$ such that $K^n /2^n \geq 2$. Iterating the previous inequality and using the convexity of $\vr^*$, we obtain 
 $$\vr^*(2 \varphi) \leq \vr^*(K^n/2^n \varphi) \leq K^n \vr^*(\varphi). $$

 (c): This follows from (a) and (b) using $\vr^{**} = \vr$.
\end{proof}

  In general it is hard to compute the Luxemburg functional, the Orlicz functional and the convex conjugate of $\vr$. However, for homogeneous $\vr$ this is possible  and will be discussed next.   Let $1 \leq p <  \infty$. We say that $\vr$ is {\em positively $p$-homogeneous} if $\vr(\lambda x) = \lambda^p \vr(x)$ for all $x \in V$ and $\lambda \geq 0$.   Clearly, positively homogeneous functionals satisfy the $\Delta_2$-condition and, if $p > 1$, also the $\nabla_2$-condition.


\begin{proposition}[The homogeneous case] \label{prop:seminorms for homogenous functionals}
Assume that for some $1 \leq p < \infty$ the functional $\vr$ is positively $p$-homogeneous and let $1 < q \leq \infty$ with $q^{-1} + p^{-1} = 1$. Then, $M(\vr) = D(\vr)$ and the following hold: 
\begin{enumerate}[(a)]
 \item For all $x \in M(\vr)$ we have $\av{x}_L = \vr(x)^{1/p}$. 
 \item If $p > 1$, then $\vr^*$ is positively $q$-homogeneous. If $p = 1$, then 
 $$\vr^*(\varphi) = \begin{cases}
                    0 &\text{if } \varphi \leq \av{\cdot}_L\\
                    \infty& \text{else } 
                   \end{cases}.
$$
 \item If $\vr$ is lower semicontinuous, then for all $x \in D(\vr)$ we have
 $$\av{x}_L^p = \vr(x) = \begin{cases}
                          \frac{q-1}{q^p} \av{x}_{O,\vr}^p &\text{if } p > 1\\
                          \av{x}_{O,\vr} &\text{if } p = 1
                         \end{cases}.$$
\end{enumerate}
\end{proposition}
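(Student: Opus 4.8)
The plan is to prove the four claims in order, bootstrapping each from the previous ones and from the duality results already established. First, for $M(\vr) = D(\vr)$: since $\vr$ is positively $p$-homogeneous with $\vr(0) = 0$, the identity $\vr(\lambda x) = \lambda^p \vr(x)$ shows that $D(\vr)$ is already a positive cone, so the earlier description $M(\vr) = \{\lambda x \mid \lambda \geq 0,\ x \in D(\vr)\}$ collapses to $D(\vr)$. For (a), I would insert the homogeneity directly into the definition of the Luxemburg functional: for $x \in D(\vr)$ the condition $\vr(\lambda^{-1}x) \leq 1$ reads $\lambda^{-p}\vr(x) \leq 1$, i.e.\ $\lambda \geq \vr(x)^{1/p}$, whence $\av{x}_L = \vr(x)^{1/p}$ (the case $\vr(x) = 0$ being immediate). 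Raising this to the $p$-th power already gives the first equality $\av{x}_L^p = \vr(x)$ of (c).

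For (b) with $p > 1$, I would establish the $q$-homogeneity of $\vr^*$ by the standard change of variables in the Fenchel transform: substituting $x = \mu^{r} y$ with $r = 1/(p-1)$ in $\vr^*(\mu\varphi) = \sup_x\{\mu(\varphi,x) - \vr(x)\}$ and using the relations $rp = 1 + r = q$ factors out $\mu^q$, so $\vr^*(\mu\varphi) = \mu^q \vr^*(\varphi)$ for $\mu > 0$; the case $\mu = 0$ is clear since $\vr^*(0) = 0$. For $p = 1$, homogeneity forces $\vr^*$ to take only the values $0$ and $\infty$: if $(\varphi, x) \leq \vr(x) = \av{x}_L$ for all $x$, then the supremum defining $\vr^*(\varphi)$ is attained at $x = 0$ and equals $0$; whereas if $(\varphi, x_0) > \vr(x_0)$ for some $x_0$, then scaling $x_0$ by $\lambda \to \infty$ and using $1$-homogeneity of both the pairing and $\vr$ drives $\vr^*(\varphi)$ to $\infty$.

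The substance of the proposition, and the step I expect to be the main obstacle, is the Orlicz identity in (c), because \cref{proposition:fundamental inequalities} only yields $\av{x}_L \leq \av{x}_{O,\vr} \leq 2\av{x}_L$ and I must sharpen the factor $2$ to the exact constant. For $p = 1$, I would observe that by (b) the constraint $\vr^*(\varphi) \leq 1$ is equivalent to $\vr^*(\varphi) = 0$, i.e.\ $\varphi \leq \av{\cdot}_L$; hence the Fenchel--Moreau representation $\vr(x) = \sup_\varphi\{(\varphi,x) - \vr^*(\varphi)\}$ reduces to precisely the supremum defining $\av{x}_{O,\vr}$, giving $\vr(x) = \av{x}_{O,\vr}$.

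For $p > 1$ I would apply part (a) to the $q$-homogeneous, lower semicontinuous functional $\vr^*$ to get $\av{\varphi}_{L,\vr^*} = \vr^*(\varphi)^{1/q}$. The upper bound then comes from \cref{proposition:fundamental inequalities}(b): from $(\varphi,x) \leq \av{\varphi}_{L,\vr^*}\,\av{x}_{O,\vr} = \vr^*(\varphi)^{1/q}\av{x}_{O,\vr}$ and Fenchel--Moreau one obtains, after the substitution $t = \vr^*(\varphi)^{1/q}$, that $\vr(x) \leq \sup_{t\geq 0}\{t\,\av{x}_{O,\vr} - t^q\}$. The matching lower bound follows by testing $\varphi = t\psi$ in the Fenchel--Moreau supremum, where $\psi$ is chosen nearly optimal for the supremum defining $\av{x}_{O,\vr}$ subject to $\vr^*(\psi)\le 1$. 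Thus $\vr(x) = \sup_{t\geq 0}\{t\,\av{x}_{O,\vr} - t^q\}$, and a one-line maximization at $t = (\av{x}_{O,\vr}/q)^{1/(q-1)}$, together with the identities $1/(q-1) = p-1$ and $q/(q-1) = p$, produces the constant $\frac{q-1}{q^p}$ and the claimed formula $\vr(x) = \frac{q-1}{q^p}\av{x}_{O,\vr}^p$.
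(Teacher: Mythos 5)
Your proof is correct; your treatment of $M(\vr)=D(\vr)$ and of parts (a) and (b) matches the paper's (which dismisses them as near-trivial computations), but in part (c) you take a genuinely different route. For $p>1$ the paper stays inside the Fenchel--Moreau supremum and slices it along the level sets $\{\vr^*=C\}$; this forces a case distinction ($\av{x}_L=0$ versus $\av{x}_L>0$) and an auxiliary positivity argument (near-optimal $\varphi$ for $\av{x}_{O,\vr}$ must satisfy $\vr^*(\varphi)>0$) to justify the rescaling identity $\av{x}_{O,\vr}=C^{-1/q}\sup\{(\varphi,x)\mid \vr^*(\varphi)=C\}$. You instead exploit conjugate duality more systematically: part (a) applied to the $q$-homogeneous functional $\vr^*$ gives $\av{\varphi}_{L,\vr^*}=\vr^*(\varphi)^{1/q}$, the upper bound then follows from Proposition~\ref{proposition:fundamental inequalities}~(b), and the lower bound is Fenchel--Young with rescaled near-optimizers -- this half needs no lower semicontinuity at all. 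Both routes terminate in the same scalar maximization $\sup_{t\geq 0}(t\av{x}_{O,\vr}-t^q)=\frac{q-1}{q^p}\av{x}_{O,\vr}^p$, which the paper packages via Example~\ref{example:basic duality}; your sandwich avoids the case analysis and the positivity lemma entirely. For $p=1$ your route is again more direct: since $\{\vr^*\leq 1\}=\{\vr^*=0\}$ by (b), the Fenchel--Moreau representation of $\vr(x)$ is literally the supremum defining $\av{x}_{O,\vr}$, whereas the paper proves the two inequalities $\av{\cdot}_L\leq\av{\cdot}_O$ and $\av{\cdot}_O\leq\av{\cdot}_L$ separately. Two details worth recording explicitly when you write this up: invoking Proposition~\ref{proposition:fundamental inequalities}~(b) requires $\av{x}_{O,\vr}<\infty$, which holds by part (a) of that proposition since $x\in D(\vr)=M(\vr)$; and the passage $\varepsilon\to 0$ in your lower bound uses the (obvious) continuity of $a\mapsto\sup_{t\geq 0}(ta-t^q)$.
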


\begin{proof}
(a): This is trivial. 

(b): $p > 1$: Let $\varphi \in D(\vr^*)$ and $\lambda \geq 0$. Using $\vr(\lambda^{q-1} x) = \lambda^{p(q-1)} \vr(x) = \lambda^q \vr(x)$, we obtain 
 \begin{align*}
  \lambda^q \vr^*(\varphi) &= \sup \{(\lambda \varphi, \lambda^{q-1} x) - \lambda^q \vr(x) \mid x \in V\}\\
  &= \sup \{(\lambda \varphi, \lambda^{q-1} x) - \vr(\lambda^{q-1}x) \mid x \in V\}\\
  &= \vr^*(\lambda \varphi).
 \end{align*}

 $p = 1$: Assume $\varphi \leq \av{\cdot}_L$. Since $\vr = \av{\cdot}_L$, the  assumed inequality implies $(\varphi,x) - \vr(x) \leq 0$ for all $x \in D(\vr)$. Since the left side of this inequality equals $0$ for $x = 0$, we infer $\vr^*(\varphi) = 0$. 
 
 Now assume there exists $x \in V$ with $(\varphi,x) > \av{x}_L = \vr(x)$. In particular, this implies $x \in M(\vr) = D(\vr)$. For $\lambda \geq 0$ we obtain 
 $$(\varphi,\lambda x) - \vr(\lambda x) = \lambda ((\varphi,x) - \av{x}_L) \to \infty, \quad \text{as } \lambda \to \infty, $$
 showing $\vr^*(\varphi) = \infty$. 
 
(c): $p > 1$: Case 1: $\vr(x)^{1/p} = \av{x}_L = 0$. By Lemma~\ref{proposition:fundamental inequalities} this is equivalent to $\av{x}_O = 0$. Hence, the desired equality holds true with both sides equal to $0$. 

Case 2:  $x \in D(\vr)$ with $\av{x}_L > 0$. Lemma~\ref{proposition:fundamental inequalities} shows $\av{x}_O > 0$. In particular,  there exists $\varphi \in V'$ with $(\varphi,x) > 0$ and $\vr^*(\varphi) \leq 1$. By the homogeneity of $\vr^*$ this implies $\vr^*(\varphi) > 0$, as otherwise we would have $\vr^*(\lambda \varphi) = \lambda^q \vr^*(\varphi) = 0$ for all $\lambda > 0$, which leads to $\av{x}_O  \geq \lambda (\varphi,x)$ for all $\lambda > 0$, a contradiction. 

With  this observation, a simple rescaling argument using the $q$-homogeneity of $\vr^*$ yields that for all $C > 0$ we have
\begin{align*}
 \av{x}_O &= \sup \{(\varphi,x) \mid \varphi \in V', 0 <  \vr^*(\varphi) \leq 1 \}\\
 &= C^{-\frac{1}{q}} \sup \{(\varphi,x) \mid \varphi \in V', \vr^*(\varphi) = C\}
\end{align*}
 Using this identity, $\vr = \vr^{**}$ and the duality discussed in Example~\ref{example:basic duality},  we obtain 
 \begin{align*}
  \vr(x) &= \sup\{(\varphi,x) - \vr^*(\varphi) \mid \varphi \in D(\vr^*)\} \\
  &= \sup_{C \geq 0} \sup \{ (\varphi,x) - C \mid \varphi \in V', \vr^*(\varphi) = C\}\\
  &=  \sup_{C \geq 0}  \left(C^\frac{1}{q} \av{x}_O - C \right) = q \sup_{C \geq 0}  \left(C \frac{\av{x}_O}{q} - \frac{C^q}{q} \right) \\
  &= q \sup_{C \in \R}  \left(C \frac{\av{x}_O}{q} - \frac{|C|^q}{q} \right)  = q \frac{1}{p} \left(\frac{\av{x}_O}{q}\right)^p = \frac{q-1}{q^p} \av{x}_O^p.
 \end{align*}
For the last equality we used $q/p = q - 1$. 

 $p = 1$: The inequality $\av{\cdot}_L \leq \av{\cdot}_O$ holds true in general, see Proposition~\ref{proposition:fundamental inequalities}. According to (b) we have  $\vr^*(\varphi) \leq 1$ if and only if $\varphi \leq \av{\cdot}_L$. Using this observation and  Proposition~\ref{proposition:fundamental inequalities},  for  $x \in D(\vr)$ we estimate
 \begin{align*}
 \av{x}_O &= \sup\{(\varphi,x) \mid \varphi \in V', \varphi \leq \av{\cdot}_L\} \\
 &\leq \av{x}_L \sup\{ \av{\varphi}_{O,\vr^*} \mid \varphi \in V', \varphi \leq \av{\cdot}_L\}.
 \end{align*}
 Using $\vr^{**} = \vr$, $\vr = \av{\cdot}_L$ and $\varphi \leq \av{\cdot}_L$, we infer 
 \begin{align*}
 \av{\varphi}_{O,\vr^*} &= \sup \{(\varphi,x) \mid x \in V, \vr(x) \leq 1\} \leq 1.¸
 \end{align*}
Combining these inequalities yields $\av{\cdot}_O \leq \av{\cdot}_L$. 
%
%
%
%
%
%
%
\end{proof}


%
%

For later purposes we state one last elementary lemma on convex functions.

\begin{lemma}\label{lemma:convexity estimate}
Let $f \colon V \to \R$ be convex. Then for each $x,y \in V$ and $\lambda \in \R$ with $| \lambda | \leq 1$ we have 
 $$f(x + \lambda y) + f(x - \lambda y) \leq f(x + y) + f(x - y).$$
\end{lemma}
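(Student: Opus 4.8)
The plan is to realize both $x + \lambda y$ and $x - \lambda y$ as convex combinations of the two ``extreme'' points $x+y$ and $x-y$, and then apply the convexity of $f$ to each representation and add the resulting inequalities. The sole hypothesis that will be used is $|\lambda| \le 1$, whose only role is to guarantee that the relevant coefficients lie in $[0,1]$, so that convexity is applicable.

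First I would set $t = \tfrac{1+\lambda}{2}$, so that $1 - t = \tfrac{1-\lambda}{2}$, and observe that the condition $|\lambda| \le 1$ is exactly equivalent to $t \in [0,1]$. A direct computation of the coefficients of $x$ and $y$ then gives the two identities
$$x + \lambda y = t(x+y) + (1-t)(x-y), \qquad x - \lambda y = (1-t)(x+y) + t(x-y).$$

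Next, applying the convexity of $f$ to each of these convex combinations yields
$$f(x+\lambda y) \le t\, f(x+y) + (1-t)\, f(x-y)$$
and
$$f(x-\lambda y) \le (1-t)\, f(x+y) + t\, f(x-y).$$
Adding these two inequalities, the coefficients of $f(x+y)$ and of $f(x-y)$ each sum to $t + (1-t) = 1$, which immediately produces the claimed estimate $f(x+\lambda y) + f(x-\lambda y) \le f(x+y) + f(x-y)$.

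There is essentially no obstacle here: the whole content is the algebraic identity expressing $x \pm \lambda y$ as convex combinations of $x \pm y$, after which the conclusion is a one-line consequence of convexity. The only point requiring a moment's care is verifying that $|\lambda| \le 1$ forces both weights $t$ and $1-t$ to be nonnegative, which is precisely what makes the two convexity inequalities legitimate.
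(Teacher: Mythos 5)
Your proof is correct and is essentially identical to the paper's: both realize $x \pm \lambda y$ as the convex combinations $\frac{1\pm\lambda}{2}(x+y) + \frac{1\mp\lambda}{2}(x-y)$ and then apply convexity of $f$ to each and add. The paper merely states this more tersely; your write-up with $t = \frac{1+\lambda}{2}$ is the same argument spelled out.
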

\begin{proof}
Since $|\lambda| \leq 1$, the following are convex combinations: 
$$x \pm \lambda y = \frac{1 \pm \lambda}{2} (x + y) + \frac{1 \mp \lambda}{2} (x - y). $$
Hence, the statement follows from the convexity of $f$. 
\end{proof}

\subsection{Lower semicontinuity}

In this subsection we study lower semicontinuity  for symmetric functionals. More precisely, we assume that $\vr$ is as in Subsection~\ref{subsection:convex functionals} and,  additionally, we assume the symmetry of $\vr$ such that $\av{\cdot}_L$ is a seminorm. Unfortunately, the locally convex space in our application -  the space of all functions with the topology of pointwise convergence - need  not be metrizable  and so we have to deal with non-metrizable locally convex spaces.

As for normed spaces, the seminormed space $(M(\vr),\av{\cdot}_L)$ isometrically embeds into its bidual via the natural embedding $x \mapsto (\cdot,x)_{M(\vr)}$, where  $(\cdot,\cdot)_{M(\vr)}$ denotes the dual pairing between $M(\vr)$ and $M(\vr)'$. Note that this isometric embedding is not injective if $\av{\cdot}_L$ is not a norm. We say that $\vr$ is {\em reflexive} if the image of $(M(\vr),\av{\cdot}_L)$ under this natural embedding is dense in its bidual.  For more context on this notion, see \cite[Appendix~A]{SZ25}.

The locally convex topology $\mathfrak{T}$ on $V$ is generated by a family of seminorms $p_i$, $i \in I$. We denote by $\mathfrak{T}_\vr$ the locally convex topology on $M(\vr)$ generated by the family of seminorms $\av{\cdot}_L$ and $p_i|_{M(\vr)}$, $i \in I$. In particular, a net $(x_i)$ in $M(\vr)$ converges to $x \in M(\vr)$ with respect to $\mathfrak{T}_\vr$ if and only if $x_i \to x$ with respect to $\mathfrak{T}$ and $\lim \av{x_i - x}_L = 0$. 

Recall that a locally convex topological vector space $(V,\mathfrak{T})$ is called {\em complete} if every Cauchy net converges. Here, $(x_i)$ is called a {\em Cauchy net} if for every zero neighborhood $U$ there exists $i_U$ such that $x_i - x_j \in U$ for all $i,j \geq i_U$. If the topology is metrizable, $(V,\mathfrak{T})$ is complete if and only if any Cauchy sequence with respect to one/any translation invariant metric inducing $\mathfrak{T}$ converges.

For metrizable $\mathfrak{T}$ the following results on lower semicontinuity are contained in \cite[Appendix~A]{SZ25}, with the latter being based on \cite{Schmi3}, which only deals with quadratic forms. The  proofs given in \cite{SZ25}  extend to the locally convex case with some modifications. We give details for the convenience of the reader.

\begin{lemma}\label{lemma:weak convergence} 
 Assume that $\vr$ is  symmetric and reflexive and that either $\av{\cdot}_L$ is  $\mathfrak{T}$-lower semicontinuous (considered as a functional $V \to [0,\infty]$) or that $(M(\vr),\mathfrak{T}_\vr)$ is complete. If $(x_i)$ is a bounded net in $(M(\vr),\av{\cdot}_L)$ and $\mathfrak{T}$-converges to $x \in V$ with respect to $\mathfrak{T}$, then $x_i \to x$ weakly in $(M(\vr),\av{\cdot}_L)$. 
\end{lemma}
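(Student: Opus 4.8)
The plan is to establish weak convergence through the subnet criterion: a net in a topological space converges to a point exactly when every subnet has a further subnet converging to that point. Hence it suffices to show that an arbitrary subnet of $(x_i)$ admits a further subnet converging weakly in $(M(\vr),\av{\cdot}_L)$ to $x$. After relabelling I may therefore assume $(x_i)$ is already the subnet in question and only need to produce one weakly convergent (to $x$) sub-subnet.

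First I would extract a weak limit using reflexivity. Let $\hat W$ be the Banach space obtained from $(M(\vr),\av{\cdot}_L)$ by dividing out the kernel of the seminorm and completing; then $\hat W' = M(\vr)'$ and the canonical map $M(\vr)\to\hat W$ has dense range. Since the isometric image of the complete space $\hat W$ in $\hat W''$ is closed, the hypothesis that $\vr$ is reflexive (density of the image of $M(\vr)$ in its bidual) is equivalent to $\hat W$ being a reflexive Banach space. In a reflexive Banach space closed balls are weakly compact, so the bounded net $(x_i)$, whose image in $\hat W$ lies in a fixed ball, has a subnet $(x_{i_\alpha})$ converging weakly in $\hat W$ to some $\hat x$; concretely $(\varphi,x_{i_\alpha})\to(\varphi,\hat x)$ for every $\varphi\in M(\vr)'$. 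It then remains to identify $\hat x$ with $x$, that is, to show $x\in M(\vr)$ and $(\varphi,x)=(\varphi,\hat x)$ for all $\varphi\in M(\vr)'$.

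To do so I would pass from weak to strong convergence via Mazur's theorem. For each index $\alpha_0$ the tail net still converges weakly to $\hat x$, so $\hat x$ lies in the weak closure, hence by Mazur in the $\av{\cdot}_L$-closure, of the convex hull of $\{x_{i_\alpha}:\alpha\geq\alpha_0\}$. Indexing by $\alpha_0$ together with an integer $n\in\N$, I can then choose finite convex combinations $z_{(\alpha_0,n)}$ of the $x_{i_\alpha}$ with $\alpha\geq\alpha_0$ such that $\av{z_{(\alpha_0,n)}-\hat x}_L<1/n$. The resulting net $(z_\beta)$ converges to $\hat x$ in $\av{\cdot}_L$, while the convexity of each generating seminorm $p_i$ together with the use of only late indices gives $p_i(z_\beta-x)\to 0$, i.e. $z_\beta\to x$ in $\mathfrak{T}$.

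Finally I would close the gap using the two alternative hypotheses. If $(M(\vr),\mathfrak{T}_\vr)$ is complete, then $(z_\beta)$, being $\av{\cdot}_L$-Cauchy and $\mathfrak{T}$-convergent, is $\mathfrak{T}_\vr$-Cauchy, hence $\mathfrak{T}_\vr$-converges to some $y\in M(\vr)$; comparing $\mathfrak{T}$-limits and using that $\mathfrak{T}$ is Hausdorff forces $y=x\in M(\vr)$ and $\av{z_\beta-x}_L\to 0$. If instead $\av{\cdot}_L$ is $\mathfrak{T}$-lower semicontinuous, then fixing $\beta$ and letting the other index run, lower semicontinuity applied to $z_\beta-z_{\beta'}\to z_\beta-x$ yields $\av{z_\beta-x}_L\leq\liminf_{\beta'}\av{z_\beta-z_{\beta'}}_L$, which is small by the Cauchy property; again $\av{z_\beta-x}_L\to 0$, and in particular $x\in M(\vr)$. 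In either case $\av{z_\beta-x}_L\to 0$ identifies $\hat x$ with $x$ in $\hat W$, so the subnet $(x_{i_\alpha})$ converges weakly to $x$, completing the argument. I expect the main obstacle to be precisely this identification of the weak limit under non-metrizability: sequences must be replaced by nets throughout (the product index set in the Mazur step, and net versions of the Cauchy and lower semicontinuity arguments), and it is exactly here that the two hypotheses earn their keep.
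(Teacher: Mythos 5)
Your proposal is correct and follows essentially the same route as the paper's proof: reduce to producing one weakly convergent sub-subnet, use reflexivity to extract a weak limit $\hat x$ in the completion, pass to norm-convergent convex combinations via Mazur which still $\mathfrak{T}$-converge to $x$ by local convexity, and then identify $\hat x$ with $x$ by splitting into the lower-semicontinuity and completeness cases exactly as the paper does. The only differences are presentational (your explicit quotient-completion $\hat W$ and the double-indexed Mazur net, where the paper is terser), not mathematical.
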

\begin{proof}
We have to show that every subnet of $(x_i)$ has a subnet weakly converging to $x$. Since every subnet of $(x_i)$ is $\av{\cdot}_L$-bounded and $\mathfrak{T}$-converges to $x$, it suffices to show that $(x_i)$ itself has a subnet weakly converging to $x$.

%
%
%
%

The reflexivity of $\vr$ implies the weak compactness of balls in the completion of $(M(\vr),\av{\cdot}_L)$. Hence, we obtain a subnet $(y_j)$ of $(x_i)$ that converges weakly to some $\overline x$ in the completion of $(M(\vr),\av{\cdot}_L)$. Moreover, as a subnet $(y_j)$ still $\mathfrak{T}$-converges to $x$. Since the kernel of every functional in $M(\vr)'$ (extended to the completion) contains $\ker \av{\cdot}_L$, it suffices to show $\av{\overline x - x}_L = 0$ to deduce $y_j \to x$ weakly in $(M(\vr),\av{\cdot}_L)$. 

Since weak and strong closures of convex sets coincide in normed spaces, we find $(z_k)$ with the following properties:
 \begin{itemize}
  \item $(z_k)$  converges to $\overline{x}$ in the completion of $(M(\vr),\av{\cdot}_L)$ with respect to $\av{\cdot}_L$. In particular, $(z_k)$ is $\av{\cdot}_L$-Cauchy. 
  \item $(z_k)$ is a finite convex combination of the elements of $\{y_{j} \mid j \succ k\}$.
  \end{itemize}
  Since $\mathfrak{T}$ is locally convex, these convex combinations of the net $(y_j)$ with $\mathfrak{T}$-limit $x$ also $\mathfrak{T}$-converge to $x$.
  
  Case 1: $\av{\cdot}_L$ is lower semicontinuous: The lower semicontinuity and the $\av{\cdot}_L$-Cauchyness of $(z_k)$ imply 
\begin{align*}
\av{x - \overline{x}}_L &= \lim_{k} \av{x - z_k}_L \leq \liminf_{k,l} \av{z_l  - z_k}_L = 0.  
\end{align*}

Case 2:  $(M(\vr),\mathfrak{T}_\vr)$ is complete: The $\mathfrak{T}$ convergence and the $\av{\cdot}_L$-Cauchyness of $(z_k)$ imply that $(z_k)$ is $\mathfrak{T}_\vr$-Cauchy. Hence, by completeness, it $\mathfrak{T}_\vr$-converges to some $\hat x \in M(\vr)$. Since it already $\mathfrak{T}$-converges to $x$ and $\mathfrak{T}_\vr$-convergence is stronger than $\mathfrak{T}$ convergence and $\mathfrak{T}$ is Hausdorff, we infer $x =  \hat x$. But since $\mathfrak{T}_\vr$-convergence also implies $\av{\cdot}_L$-convergence, we obtain 
\begin{align*}
 0 &= \lim_k \av{z_k - x}_L = \av{\overline x - x}_L. \hfill \qedhere 
\end{align*}
\end{proof}

\begin{lemma}\label{lemma:equivalence of lower semicontinuity}
 Let $\vr$ be symmetric and reflexive. Then the following assertions are equivalent. 
 \begin{enumerate}[(i)]
  \item $\vr$ is lower semicontinuous. 
  \item $\vr$ is left-continuous and $\av{\cdot}_L$ is lower semicontinuous (as a map $V \to [0,\infty]$). 
 \end{enumerate}
\end{lemma}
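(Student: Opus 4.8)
The plan is to prove the two implications separately, with (i)$\Rightarrow$(ii) being essentially bookkeeping and (ii)$\Rightarrow$(i) carrying the real content through Lemma~\ref{lemma:weak convergence}.

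For (i)$\Rightarrow$(ii), assume $\vr$ is $\mathfrak{T}$-lower semicontinuous. Left-continuity is then immediate: convexity together with $\vr(0)=0$ already gives $\limsup_{\lambda \to 1-}\vr(\lambda x) \le \vr(x)$ (the Remark after the definition), while lower semicontinuity applied to the net $\lambda x \to x$ as $\lambda \to 1-$ gives the reverse inequality. For the lower semicontinuity of $\av{\cdot}_L$ I would argue via sublevel sets. Left-continuity yields the unit-ball identity $\{x \mid \vr(x) \le 1\} = \{x \mid \av{x}_L \le 1\}$, whose right-hand side is therefore $\mathfrak{T}$-closed. Using the positive homogeneity of the extended Luxemburg functional, every sublevel set with $t > 0$ equals the $\mathfrak{T}$-closed set $t\,\{x \mid \av{x}_L \le 1\}$, and $\{x \mid \av{x}_L \le 0\} = \bigcap_{t > 0}\{x \mid \av{x}_L \le t\}$ is closed as an intersection of closed sets; hence $\av{\cdot}_L$ is lower semicontinuous.

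For (ii)$\Rightarrow$(i) I proceed in three steps. \emph{Step 1 (strong lower semicontinuity of $\vr$ on $M(\vr)$).} Using left-continuity I would show each sublevel set $\{x \mid \vr(x) \le c\}$ is closed for the seminorm $\av{\cdot}_L$. Given $z_n \to x$ in $\av{\cdot}_L$ with $\vr(z_n) \le c$ and a fixed $\lambda \in (0,1)$, decompose $\lambda x = \lambda z_n + (1-\lambda) w_n$ as a convex combination with $w_n = \tfrac{\lambda}{1-\lambda}(x - z_n)$, so that convexity gives $\vr(\lambda x) \le \lambda \vr(z_n) + (1-\lambda)\vr(w_n)$. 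Since $\av{w_n}_L \to 0$ and left-continuity yields $\vr(u) \le \av{u}_L$ whenever $\av{u}_L \le 1$, we get $\vr(w_n) \to 0$ and hence $\vr(\lambda x) \le \lambda c$; letting $\lambda \to 1-$ and invoking left-continuity again gives $\vr(x) \le c$. Because $\av{\cdot}_L$ is a seminorm, its topology is metrizable and this sequential argument suffices. \emph{Step 2 (strong $\Rightarrow$ weak).} Since $\vr$ is convex and, by Step 1, lower semicontinuous for the $\av{\cdot}_L$-topology, the standard fact that strongly closed convex sets are weakly closed shows that $\vr$ is lower semicontinuous for the weak topology $\sigma(M(\vr),M(\vr)')$.

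\emph{Step 3 (transfer to $\mathfrak{T}$).} Let $x_i \to x$ with respect to $\mathfrak{T}$. Passing to a subnet I may assume $\vr(x_i) \to \liminf \vr(x_i) =: L$, and if $L = \infty$ there is nothing to prove. Otherwise the tail of the net lies in $M(\vr)$ and, by $\av{\cdot}_L \le 1 + \vr$, is bounded in $(M(\vr),\av{\cdot}_L)$. As $\av{\cdot}_L$ is lower semicontinuous by hypothesis (ii), Lemma~\ref{lemma:weak convergence} applies and yields $x \in M(\vr)$ together with weak convergence $x_i \to x$ in $(M(\vr),\av{\cdot}_L)$; the weak lower semicontinuity from Step 2 then gives $\vr(x) \le \liminf \vr(x_i) = L$. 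I expect the main obstacle to be Step 1: extracting strong lower semicontinuity of the \emph{non-homogeneous} functional $\vr$ from mere left-continuity, where the rescaling trick driving $\lambda \to 1-$ is essential. Everything downstream is a clean combination of convexity (Step 2) with the already-established Lemma~\ref{lemma:weak convergence} (Step 3), whose proof is where the genuine interplay between $\mathfrak{T}$-convergence and the seminorm actually resides.
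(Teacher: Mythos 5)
Your proof is correct and follows essentially the same route as the paper: (i)$\Rightarrow$(ii) via left-continuity, the unit-ball identity and the scaling argument, and (ii)$\Rightarrow$(i) by reducing to a $\av{\cdot}_L$-bounded (sub)net, invoking Lemma~\ref{lemma:weak convergence}, and then using weak lower semicontinuity of convex functionals on $(M(\vr),\av{\cdot}_L)$. The only difference is that your Step 1 proves directly, via the rescaling/left-continuity argument, the strong lower semicontinuity of $\vr$ on $(M(\vr),\av{\cdot}_L)$, a fact the paper instead imports from \cite[Corollary A.4]{SZ25}, so your version is somewhat more self-contained.
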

\begin{proof}
(i) $\Rightarrow$ (ii): As discussed above, lower semicontinuity implies left-continuity and so $\{x \in V \mid \vr(x) \leq 1\} = \{x \in V \mid \av{x}_L \leq 1\}$. Since by a simple scaling argument $\av{\cdot}_L$ is $\mathfrak{T}$-lower semicontinuous if and only if the latter set is $\mathfrak{T}$-closed, we obtain the lower semicontinuity of $\av{\cdot}_L$.

(ii) $\Rightarrow$ (i):  Let $(x_i)$ be a net in $V$ such that $x_i \to x$ with respect to $\mathfrak{T}$. We can assume $\liminf \vr(x_i)  < \infty$ (else there is nothing to show)  and hence even $R = \sup_i \vr(x_i) < \infty$ (else pass to a suitable subnet). By convexity and $\vr(0) = 0$, we obtain $\av{x_i}_L \leq 1 + R $ for all $i$. Hence, $(x_i)$ is $\av{\cdot}_L$-bounded and we infer $x_i \to x$ weakly in $(M(\vr),\av{\cdot}_L)$ from Lemma~\ref{lemma:weak convergence}. Since $\vr$ is lower semicontinuous when considered as a functional on  $(M(\vr),\av{\cdot}_L)$ (see \cite[Corollary A.4]{SZ25}, which uses left-continuity) and on normed spaces  lower semicontinuous convex functionals are  lower semicontinuous with respect to weak convergence, we infer $\vr(x) \leq \liminf \vr(x_i).$
\end{proof}

\begin{theorem}[Lower semicontinuity vs. completeness]\label{theorem:lower semicontinuity versus completeness}
 Assume that $\vr$ is symmetric and reflexive and that $(V,\mathfrak{T})$ is complete. Then the following assertions are equivalent. 
 \begin{enumerate}[(i)]
  \item $\vr$ is lower semicontinuous. 
  \item The locally convex space $(M(\vr),\mathfrak{T}_\vr)$ is complete and $\vr$ is left-continuous. 
 \end{enumerate}
 If $\mathfrak{T}_\vr$ is metrizable, then these are equivalent to:
 \begin{enumerate}[(i)]\setcounter{enumi}{2}
  \item $\vr$ is sequentially lower semicontinuous. 
 \end{enumerate}

\end{theorem}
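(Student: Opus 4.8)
The plan is to first establish the equivalence (i) $\Leftrightarrow$ (ii) for arbitrary $\mathfrak{T}_\vr$, and then, under the additional metrizability assumption, close the loop by proving (i) $\Rightarrow$ (iii) (which is trivial) and (iii) $\Rightarrow$ (ii). Throughout I would exploit that, since $\vr$ is symmetric, $M(\vr)$ is a vector space and $\av{\cdot}_L$ a seminorm, and that left-continuity of $\vr$ together with $\{x \mid \vr(x) \le 1\} = \{x \mid \av{x}_L \le 1\}$ lets me translate statements about $\vr$ into statements about sublevel sets of $\av{\cdot}_L$.

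For (i) $\Rightarrow$ (ii): Lemma~\ref{lemma:equivalence of lower semicontinuity} immediately gives that $\vr$ is left-continuous and that $\av{\cdot}_L$ is $\mathfrak{T}$-lower semicontinuous, so only completeness of $(M(\vr),\mathfrak{T}_\vr)$ remains. I would take a $\mathfrak{T}_\vr$-Cauchy net $(x_i)$ in $M(\vr)$; it is $\mathfrak{T}$-Cauchy and $\av{\cdot}_L$-Cauchy, so by completeness of $(V,\mathfrak{T})$ it $\mathfrak{T}$-converges to some $x \in V$. Given $\eps > 0$, pick $i_0$ with $\av{x_i - x_j}_L \le \eps$ for $i,j \succ i_0$; fixing $i \succ i_0$, the net $(x_j - x_i)_j$ $\mathfrak{T}$-converges to $x - x_i$, so lower semicontinuity of $\av{\cdot}_L$ yields $\av{x - x_i}_L \le \liminf_j \av{x_j - x_i}_L \le \eps$. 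Hence $x - x_i \in M(\vr)$, whence $x \in M(\vr)$, and letting $i$ vary shows $\av{x_i - x}_L \to 0$, i.e. $x_i \to x$ in $\mathfrak{T}_\vr$.

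For (ii) $\Rightarrow$ (i): The point is that I cannot simply invoke Lemma~\ref{lemma:equivalence of lower semicontinuity}, since I do not yet know that $\av{\cdot}_L$ is lower semicontinuous; instead I would reproduce the proof of its implication (ii) $\Rightarrow$ (i), replacing the lower-semicontinuity hypothesis by completeness. Concretely, for a net $x_i \to x$ in $\mathfrak{T}$ I may assume $R := \sup_i \vr(x_i) < \infty$ (else pass to a subnet), so $\av{x_i}_L \le 1 + R$ and the net is $\av{\cdot}_L$-bounded. Since $(M(\vr),\mathfrak{T}_\vr)$ is complete, Lemma~\ref{lemma:weak convergence} applies and gives $x_i \to x$ weakly in $(M(\vr),\av{\cdot}_L)$. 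As $\vr$ is left-continuous, it is lower semicontinuous on the normed space $(M(\vr),\av{\cdot}_L)$ by \cite[Corollary A.4]{SZ25}, hence weakly lower semicontinuous by convexity, giving $\vr(x) \le \liminf \vr(x_i)$.

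Finally, the metrizable case. The implication (i) $\Rightarrow$ (iii) is immediate, since lower semicontinuity always implies sequential lower semicontinuity. For (iii) $\Rightarrow$ (ii) I would argue as follows. Applying sequential lower semicontinuity to sequences $\lambda_n x \to x$ with $\lambda_n \nearrow 1$ yields $\vr(x) \le \liminf_n \vr(\lambda_n x)$, i.e. left-lower semicontinuity, which for our functionals is equivalent to left-continuity. Under left-continuity the sublevel sets $\{x \mid \av{x}_L \le r\}$ coincide with sublevel sets of $\vr$, so sequential lower semicontinuity of $\vr$ makes them sequentially $\mathfrak{T}$-closed, i.e. $\av{\cdot}_L$ is sequentially $\mathfrak{T}$-lower semicontinuous. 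Completeness of $(M(\vr),\mathfrak{T}_\vr)$ then follows by the same Cauchy argument as in (i) $\Rightarrow$ (ii), now carried out for sequences; since $\mathfrak{T}_\vr$ is metrizable, checking that $\mathfrak{T}_\vr$-Cauchy sequences converge suffices for completeness. I expect the main obstacle to be the direction (ii) $\Rightarrow$ (i): all the genuine functional-analytic content (reflexivity, weak compactness of balls, weak lower semicontinuity) is hidden in Lemma~\ref{lemma:weak convergence} and the cited corollary, and the delicate step is recognizing that completeness, rather than lower semicontinuity of $\av{\cdot}_L$, is exactly the hypothesis needed to feed into Lemma~\ref{lemma:weak convergence} in order to run that argument.
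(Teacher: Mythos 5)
Your proposal is correct and follows essentially the same route as the paper: both nontrivial directions hinge on feeding the completeness hypothesis into Lemma~\ref{lemma:weak convergence}, and the metrizable case is handled exactly as the paper prescribes, by rerunning the (i) $\Leftrightarrow$ (ii) arguments with sequences in place of nets. The only cosmetic difference is in (ii) $\Rightarrow$ (i), where you inline the proof of Lemma~\ref{lemma:equivalence of lower semicontinuity} and apply weak lower semicontinuity to $\vr$ directly via \cite[Corollary A.4]{SZ25}, whereas the paper first establishes $\mathfrak{T}$-lower semicontinuity of $\av{\cdot}_L$ (using weak lower semicontinuity of seminorms) and then invokes that lemma.
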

\begin{proof}
 (i) $\Rightarrow$ (ii): As discussed above, left-continuity follows from convexity and lower semicontinuity.  Let $(x_i)$ be $\mathfrak{T}_\vr$-Cauchy. Then $(x_i)$ is also $\mathfrak{T}$-Cauchy and by the completeness of $(V,\mathfrak{T})$ there exists $x \in V$ with $\lim_i x_i = x$ with respect to $\mathfrak{T}$.  By the previous lemma $\av{\cdot}_L$ is lower semicontinuous on $(V,\mathfrak{T})$. We infer 
 $$\av{x - x_i}_L \leq \liminf_{j} \av{x_j - x_i}_L.$$
 Since $(x_i)$ is also $\av{\cdot}_L$-Cauchy, we infer $x \in M(\vr)$ and $x_i \to x$ with respect to $\av{\cdot}_L$. This implies $x_i \to  x$ with respect to $\mathfrak{T}_\vr$. 
 
 (ii) $\Rightarrow$ (i): According to the previous lemma it suffices to show the lower semicontinuity of $\av{\cdot}_L$ (considered as a functional $V \to [0,\infty]$).  Let $(x_i)$ be a net in $V$ with $x_i \to x$ with respect to $\mathfrak{T}$. We can assume $\liminf_i \av{x_i}_L < \infty $ (else there is nothing to show) and hence also $\sup_i \av{x_i}_L < \infty$ (else pass to a suitable subnet). With this at hand Lemma~\ref{lemma:weak convergence} implies $x_i \to x$ weakly in $(M(\vr),\av{\cdot}_L)$. But each seminorm is lower semicontinuous with respect to weak convergence and hence we obtain $\av{x}_L \leq \liminf \av{x_i}_L$.

 If $\mathfrak{T}_\vr$ is metrizable, then completeness of $(M(\vr),\mathfrak{T}_\vr)$ is equivalent to sequential completeness. In this case, the equivalence of (ii) and (iii) can be proven exactly as the equivalence of (i) and (ii) with nets replaced by sequences.  
 \end{proof}

 \begin{remark}
  The implication (i) $\Rightarrow$ (ii) holds without assuming the reflexivity of $\vr$. That the converse need not hold without the reflexivity assumption will be discussed below in Example~\ref{example:failure of equivalence}. 
 \end{remark}

%

\begin{theorem}\label{theorem:two imply the third}
Assume that $\vr$ is symmetric and reflexive and that $(V,\mathfrak{T})$ is complete. If $(M(\vr),\mathfrak{T}_\vr)$ is metrizable, then each two of the following imply the third. 
\begin{enumerate}[(i)]
 \item $\vr$ is (sequentially) lower semicontinuous.
 \item $\ker \av{\cdot}_L$ is $\mathfrak{T}$-closed and 
 $$(M(\vr)/\ker \av{\cdot}_L, \av{\cdot}_L) \to (V/\ker \av{\cdot}_L, \mathfrak{T}/\ker \av{\cdot}_L), \quad [x] \mapsto [x]$$
 is continuous. Here,  $\mathfrak{T}/\ker \av{\cdot}_L$ denotes the quotient topology and $[x] = x + \ker \av{\cdot}_L$
\item $(M(\vr)/\ker \av{\cdot}_L, \av{\cdot}_L)$ is a Banach space and $\vr$ is left-continuous. 
\end{enumerate}
\end{theorem}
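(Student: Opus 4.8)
The plan is to treat the three implications by reducing condition (i) to the completeness of $(M(\vr),\mathfrak{T}_\vr)$ via the previous results, and then transferring this completeness back and forth to the normed quotient $N := (M(\vr)/\ker\av{\cdot}_L,\av{\cdot}_L)$, the bridge in both directions being the continuity statement in (ii). Throughout I abbreviate $K = \ker\av{\cdot}_L$ and observe that in each of the three implications left-continuity of $\vr$ is at our disposal: it is part of (iii), and by convexity it is automatic from the lower semicontinuity in (i). Hence Theorem~\ref{theorem:lower semicontinuity versus completeness} and Lemma~\ref{lemma:equivalence of lower semicontinuity} apply, so that (i) is equivalent to ``$(M(\vr),\mathfrak{T}_\vr)$ is complete'' and also to ``$\av{\cdot}_L$ is $\mathfrak{T}$-lower semicontinuous''. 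Since $\mathfrak{T}_\vr$ is assumed metrizable, whenever (i) holds $(M(\vr),\mathfrak{T}_\vr)$ is a Fréchet space, and $K=\{\av{\cdot}_L = 0\}$ is $\mathfrak{T}_\vr$-closed because $\av{\cdot}_L$ is one of the seminorms generating $\mathfrak{T}_\vr$.

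For (ii)\,+\,(iii)\,$\Rightarrow$\,(i) I would prove completeness of $(M(\vr),\mathfrak{T}_\vr)$ directly and then invoke Theorem~\ref{theorem:lower semicontinuity versus completeness}. Given a $\mathfrak{T}_\vr$-Cauchy sequence $(x_n)$, it is both $\av{\cdot}_L$-Cauchy and $\mathfrak{T}$-Cauchy. The Banach property in (iii) yields $w\in M(\vr)$ with $\av{x_n-w}_L\to 0$, while completeness of $(V,\mathfrak{T})$ yields $x\in V$ with $x_n\to x$ in $\mathfrak{T}$. The continuity of the quotient map in (ii) turns $\av{x_n-w}_L\to 0$ into $[x_n]\to[w]$ in $\mathfrak{T}/K$, whereas $x_n\to x$ gives $[x_n]\to[x]$; since $K$ is $\mathfrak{T}$-closed the quotient is Hausdorff, so $[x]=[w]$, forcing $x = w + (x-w)\in M(\vr)$ and $\av{x_n-x}_L = \av{x_n-w}_L\to 0$. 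Thus $x_n\to x$ in $\mathfrak{T}_\vr$.

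The two remaining implications rest on the observation that, under (ii), the quotient topology $\mathfrak{T}_\vr/K$ and the norm topology of $N$ coincide on $M(\vr)/K$: the quotient seminorm of $\av{\cdot}_L$ is the norm of $N$, and the continuity in (ii) says precisely that each quotient seminorm $\inf_{k\in K}p_i(\,\cdot\,-k)$ is $\av{\cdot}_L$-continuous, i.e.\ bounded by a multiple of $\av{\cdot}_L$, so adjoining these seminorms does not enlarge the topology. For (i)\,+\,(ii)\,$\Rightarrow$\,(iii) I would then use that a quotient of a Fréchet space by a closed subspace is again complete: $(M(\vr)/K,\mathfrak{T}_\vr/K)$ is complete, and by the coincidence just noted this is exactly the statement that $N$ is Banach; together with the left-continuity from (i) this is (iii). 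For (i)\,+\,(iii)\,$\Rightarrow$\,(ii), the $\mathfrak{T}$-closedness of $K$ is immediate from the $\mathfrak{T}$-lower semicontinuity of $\av{\cdot}_L$ supplied by (i) and Lemma~\ref{lemma:equivalence of lower semicontinuity}; for the continuity of $j$ I would apply the open mapping theorem to the canonical surjection $\pi\colon(M(\vr),\mathfrak{T}_\vr)\to(N,\av{\cdot}_L)$, a continuous linear map between Fréchet spaces. Its openness shows that $\av{\cdot}_L$ induces exactly the topology $\mathfrak{T}_\vr/K$ on $N$, and since $\mathfrak{T}/K$ is coarser than $\mathfrak{T}_\vr/K$, the map $j$ is continuous.

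The main obstacle is the bookkeeping with the three competing topologies on the quotient — the norm $\av{\cdot}_L$, the quotient $\mathfrak{T}_\vr/K$, and the restriction of $\mathfrak{T}/K$ — and verifying carefully that the quotient seminorms computed inside $M(\vr)$ agree with those computed in $V$ and restricted to $N$, so that condition (ii) really encodes the domination of the $p_i$-quotient seminorms by $\av{\cdot}_L$. Once this identification is secured, the abstract inputs (completeness of a Fréchet quotient and the open mapping theorem) do the work, and the metrizability hypothesis is what licenses both the Fréchet framework and the passage from nets to sequences.
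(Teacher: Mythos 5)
Your proposal is correct and follows essentially the same route as the paper's proof: reduce (i) to completeness of $(M(\vr),\mathfrak{T}_\vr)$ via Theorem~\ref{theorem:lower semicontinuity versus completeness}, use the description of $\mathfrak{T}_\vr/\ker\av{\cdot}_L$ as generated by $\av{\cdot}_L$ together with the restriction of $\mathfrak{T}/\ker\av{\cdot}_L$, invoke completeness of quotients of complete metrizable spaces and the open mapping theorem, and settle (ii)\,\&\,(iii)\,$\Rightarrow$\,(i) by a Hausdorff-quotient identification of the two limits. The only cosmetic differences are that you apply the open mapping theorem to the canonical surjection $(M(\vr),\mathfrak{T}_\vr)\to(M(\vr)/\ker\av{\cdot}_L,\av{\cdot}_L)$ rather than to the bijection $\Phi$ between the two quotient topologies, and you phrase (i)\,\&\,(ii)\,$\Rightarrow$\,(iii) as a coincidence of topologies instead of chasing a Cauchy sequence.
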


Before proving this theorem, we recall one elementary fact about quotient topologies in locally convex (or more general topological) vector spaces. If $F \subset V$ is a subspace, then the quotient topology on $V/F$ is given by 
$$\mathfrak{T} /F  = \{F + U \mid U \in \mathfrak{T}\}.$$
For symmetric $\vr$ this implies that the quotient topology $\mathfrak{T}_\vr / \ker \av{\cdot}_L$ on $M(\vr)/\ker \av{\cdot}_L$ is the topology generated by the norm $\av{\cdot}_L$ on $M(\vr)/\ker \av{\cdot}_L$  and the restriction of the quotient topology $\mathfrak{T}/\ker \av{\cdot}_L$ to the subspace  $M(\vr)/\ker \av{\cdot}_L$. We refer to \cite[Lemma 1.39]{Schmi3}  for more details. This reference only discusses the case when $\av{\cdot}_L$ is the square root of a quadratic form, but directly extends to our situation.

\begin{proof}

 (i) \& (ii) $\Rightarrow$ (iii): Let $([x_n])$ be a Cauchy sequence in $M(\vr)/\ker \av{\cdot}_L$ with respect to $\av{\cdot}_L$. The continuity of the embedding in (ii) implies that $([x_n])$ is Cauchy in $(V/\ker \av{\cdot}_L,\mathfrak{T}/\ker \av{\cdot}_L)$. Our description of $\mathfrak{T}_\vr / \ker \av{\cdot}_L$ prior to this proof yields that $([x_n])$ is Cauchy with respect to $\mathfrak{T}_\vr / \ker \av{\cdot}_L$. According to Theorem~\ref{theorem:lower semicontinuity versus completeness}, the lower semicontinuity of $\vr$ in (i) implies that the space $(M(\vr),\mathfrak{T}_\vr)$ is complete. Since the quotient of a complete metrizable topological vector space by a closed subspace is again complete, see \cite[Theorem~6.3]{Schae71}, we infer that $M(\vr)/\ker \av{\cdot}_L$ equipped with $\mathfrak{T}_\vr / \ker \av{\cdot}_L$ is complete. Hence, there exists $x \in M(\vr)$ such that $[x_n] \to [x]$ with respect to $\mathfrak{T}_\vr / \ker \av{\cdot}_L$. Since the latter topology is finer than the $\av{\cdot}_L$-norm topology on $M(\vr) / \ker \av{\cdot}_L$, we infer $[x_n] \to [x]$ with respect to $\av{\cdot}_L$.  
 
 The left continuity of $\vr$ follows from its sequential lower semicontinuity.

 (i) \& (iii) $\Rightarrow$ (ii): According to Lemma~\ref{lemma:equivalence of lower semicontinuity}, the lower semicontinuity of $\vr$ implies the lower semicontinuity of $\av{\cdot}_L$. Hence, $\ker \av{\cdot}_L$ is $\mathfrak{T}$-closed. Now consider the bijective map 
 $$\Phi \colon (M(\vr)/\ker \av{\cdot}_L, \mathfrak{T}_\vr / \ker \av{\cdot}_L) \to (M(\vr)/\ker \av{\cdot}_L, \av{\cdot}_L), \quad  [x] \mapsto [x].$$
 By our description of  $\mathfrak{T}_\vr / \ker \av{\cdot}_L$ prior to this proof, it is continuous and it suffices to show the continuity of $\Phi^{-1}$. 
 
 Since $(M(\vr),\mathfrak{T}_\vr)$ metrizable and also complete (use Theorem~\ref{theorem:lower semicontinuity versus completeness}), the quotient space $(M(\vr)/\ker \av{\cdot}_L, \mathfrak{T}_\vr / \ker \av{\cdot}_L)$ is  metrizable and  complete, see \cite[Theorem~6.3]{Schae71}. Using also the completeness of $(M(\vr)/\ker \av{\cdot}_L, \av{\cdot}_L)$, the open mapping theorem (which holds for mappings between complete metrizable topological vector spaces, see e.g. \cite[Theorem~3.8]{Hus65}) yields the desired continuity of $\Phi^{-1}$.
 
 (ii) \& (iii) $\Rightarrow$ (i): Using Theorem~\ref{theorem:lower semicontinuity versus completeness}, it suffices to show the completeness of $(M(\vr),\mathfrak{T}_\vr)$. To this end, let $(x_n)$ be Cauchy with respect to $\mathfrak{T}_\vr$.  The completeness of $(V,\mathfrak{T})$ implies that $(x_n)$ has a $\mathfrak{T}$-limit $x$ and the completeness of $(M(\vr)/\ker \av{\cdot}_L ,\av{\cdot}_L)$ implies that it has a $\av{\cdot}_L$-limit $y$. It suffices to show $ \av{x - y}_L = 0$, as this would imply $x_n \to x$ with respect to $\av{\cdot}_L$ and hence $x_n \to x$ with respect to $\mathfrak{T}_\vr$.
 
  The continuity of the embedding in (ii) implies $[x_n]  \to [y]$ with respect to $\mathfrak{T} / \ker\av{\cdot}_L$. Moreover, the continuity of the canonical projection $(V,\mathfrak{T}) \to (V /\ker \av{\cdot}_L, \mathfrak{T}/ \av{\cdot}_L), \, z \mapsto [z],$ yields $[x_n] \to [x]$  with respect to $\mathfrak{T} / \ker\av{\cdot}_L$. Since $\ker\av{\cdot}_L$ is closed, the topology $\mathfrak{T} / \ker\av{\cdot}_L$ is Hausdorff and we infer $[x] = [y]$, i.e., $ \av{x - y}_L = 0$. 
\end{proof}

\begin{remark}
 The implications (i) \& (ii) $\Rightarrow$ (iii)  and  (i) \& (iii) $\Rightarrow$ (ii) hold without assuming  reflexivity of $\vr$. The implication (ii) \& (iii) $\Rightarrow$ (i) holds without assuming metrizability of $\mathfrak{T}_\vr$.  
\end{remark}

%
%
%
%
%

%
%
%
%
%
%
%
%
%
%
%
%
%
%
%
%
%

\section{Convex functionals on $\cF(X)$ and the elementary resistance} \label{section:elementary resistance}

From this section onwards we study convex functionals defined on all real-valued functions on a set. 

Let $X \neq \emptyset$. We write $\cF(X) = \{f \colon X \to \R\}$ for the vector space of all real-valued functions on $X$. For $x \in X$ we let $\delta_x \colon \cF(X) \to \R$, $\delta_x(f) = f(x)$. We equip $\cF(X)$ with the locally convex topology  of pointwise convergence $\mathfrak{P}$, which is induced by the family of seminorms $|\delta_x|$, $x \in X$. With this topology (which we often suppress in notation), the space $\cF(X)$ is Hausdorff and complete and so  the theory outlined in the previous section can be applied to convex functionals on $\cF(X)$.  
 
A linear functional  $\varphi \colon \cF(X) \to \R$ is continuous if and only if there exists a finite set $K \subset X$ and $C \geq 0$ such that 
$$|\varphi(f)| \leq C \sum_{x \in K} |f(x)|, \quad f \in \cF(X).$$
This allows us to identify the dual space of $\cF(X)$ with the finitely supported functions $\cF_c(X) = \{g \in \cF(X) \mid \{g \neq 0\} \text{ finite}\}$ via the dual pairing 
$$\cF_c(X) \times \cF(X) \to \R,\quad (g,f) = \sum_{x \in X} g(x) f(x).$$
As a above, we identify $(\cF_c(X),\sigma(\cF_c(X),\cF(X)))'$ with $\cF(X)$ via this dual pairing. 

For the rest of this section we assume that $\cE \colon \cF(X) \to [0,\infty]$ is a convex functional with $\cE(0) = 0$.

\subsection{The elementary resistance}

In this subsection we introduce the elementary resistance for $\cE$, which will be refined later. In particular, we discuss its relation to the Orlicz functional and, in the symmetric case, the dual norm of the Luxemburg seminorm.

\begin{definition}[Elementary resistance] \label{definition:elementary resistance}
The {\em elementary resistance} of $\cE$ is defined by
$$ R = R_\cE \colon X \times X \to [0,\infty], \quad R(x,y) = \sup \{f(x) - f(y) \mid  \cE(f) \leq 1\}.$$
Moreover, for $x \in X$ we define  {\em elementary resistance} between $x$ and $\infty$ by  
$$R_\infty(x) = R_{\infty,\cE}(x)  =  \sup \{f(x) \mid  \cE(f) \leq 1\}.$$
\end{definition}


\begin{remark}[Resistance to a boundary point]\label{remark:resistance to boundary point}
 Assume that $\Delta$ is a point  not contained in $X$. We let $\hat X = X \cup \{\Delta\}$ and consider the functional 
$$\cE_\Delta \colon \cF(\hat X) \to [0,\infty],\quad \cE_\Delta(f) = \cE(f|_X - f(\Delta)).$$
For $x \in X$ we obtain 
\begin{align*}
R_{\cE_\Delta}(x,\Delta) &= \sup \{f(x) - f(\Delta) \mid \cE_\Delta(f) \leq 1\} \\
&= \sup \{f(x) \mid \cE(f|_X) \leq 1\}\\
&= R_{\infty,\cE}(x).
\end{align*}
In this sense, $R_\infty(x)$ can be interpreted as elementary resistance to a boundary point at infinity. A similar computation shows $R_{\cE_\Delta}(x,y) = R_\cE(x,y)$ for $x,y \in X$. Moreover, the identity
$$D(\cE) = \{f|_X \mid f \in D(\cE_\Delta) \text{ with }f(\Delta) =  0\}$$
can be interpreted as $\cE$ arising from $\cE_\Delta$ by putting Dirichlet boundary conditions at $\Delta$. 
\end{remark}

\begin{proposition}[Basic properties of the elementary resistance] \label{proposition: some basic inequalities} For all $x,y,z \in X$ the following holds: 
 \begin{enumerate}[(a)]
 \item  $R(x,z) \leq R(x,y) + R(y,z)$ and $R(x,z) \leq R_\infty(x) + R_\infty(z)$. Moreover, if $R(x,y) < \infty$, then
 $$f(x) - f(y) \leq R(x,y) \av{f}_L, \quad f \in M(\cE)$$
 and if $R_\infty(x) < \infty$, then 
  $$f(x)  \leq R_\infty(x) \av{f}_L, \quad f \in M(\cE).$$

  \item If $\cE$ is symmetric, then 
  $$R(x,y) = R(y,x) = \sup \{|f(x) - f(y)| \mid \cE(f) \leq 1\}$$
  and 
  $$R_\infty(x) = \sup \{|f(x)| \mid \cE(f) \leq 1\}.$$

  \item If $\cE$ is lower semicontinuous, then 
  $$R(x,y) = \av{\delta_x - \delta_y}_{O,\cE^*} \text{ and } R_\infty(x) = \av{\delta_x}_{O,\cE^*}.$$
  
 \end{enumerate}
\end{proposition}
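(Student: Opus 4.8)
The plan is to treat the three parts in turn, in each case reducing to the defining suprema and invoking the convex-analytic machinery of Section~\ref{section:convex analysis}.

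For (a), the two inequalities on the first line come from the subadditivity of the supremum over the sublevel set $\{f : \cE(f) \le 1\}$. For the triangle inequality I would telescope, writing $f(x) - f(z) = (f(x)-f(y)) + (f(y)-f(z))$ for each admissible $f$, bounding the two summands by $R(x,y)$ and $R(y,z)$, and then taking the supremum over $f$. The inequality $R(x,z) \le R_\infty(x) + R_\infty(z)$ is obtained the same way by splitting $f(x)-f(z)$ into $f(x)$, bounded by $R_\infty(x)$, and $-f(z)$, bounded by $R_\infty(z)$ (the second bound being transparent once one notes that the admissible set is invariant under $f \mapsto -f$, which is precisely the symmetry that (b) makes explicit). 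For the Luxemburg estimates I would scale: given $f \in M(\cE)$ with $\av{f}_L < \infty$ and any $\lambda > \av{f}_L$, convexity together with $\cE(0)=0$ yields $\cE(\lambda^{-1}f) \le 1$, so that $\lambda^{-1}(f(x)-f(y)) \le R(x,y)$; rearranging and letting $\lambda \searrow \av{f}_L$ gives $f(x)-f(y) \le R(x,y)\av{f}_L$. The degenerate case $\av{f}_L = 0$ must be handled separately: there $\cE(\lambda^{-1}f) \le 1$ for all $\lambda > 0$, which forces $f(x)-f(y) \le 0 = R(x,y)\av{f}_L$ because $R(x,y) < \infty$. The estimate for $R_\infty$ is identical with the term $f(y)$ deleted.

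For (b), the point is that symmetry of $\cE$ makes $\{f : \cE(f) \le 1\}$ invariant under $f \mapsto -f$. Substituting $g = -f$ in the supremum defining $R(x,y)$ gives $R(x,y) = R(y,x)$ at once, and since $|f(x)-f(y)| = \max(f(x)-f(y),\, f(y)-f(x))$ the supremum of $|f(x)-f(y)|$ over the admissible set equals $\max(R(x,y),R(y,x)) = R(x,y)$; the claim for $R_\infty$ is the same computation after deleting $f(y)$. For (c) I would simply invoke the formula for the Orlicz functional of $\cE^*$ recorded just before the proposition: since $\cE$ is lower semicontinuous we have $\cE^{**} = \cE$, whence $\av{\varphi}_{O,\cE^*} = \sup\{(\varphi,f) : f \in \cF(X),\ \cE(f) \le 1\}$ for $\varphi \in \cF_c(X)$. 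The dual pairing on $\cF_c(X)\times\cF(X)$ gives $(\delta_x-\delta_y, f) = f(x)-f(y)$ and $(\delta_x, f) = f(x)$, so specialising $\varphi$ to $\delta_x - \delta_y$ and to $\delta_x$ reproduces exactly the suprema defining $R(x,y)$ and $R_\infty(x)$. Thus (c) is a bookkeeping step once the Fenchel--Moreau identity $\cE^{**}=\cE$ is available.

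I expect the only genuine subtlety to lie in (a): making the scaling argument for the Luxemburg estimates watertight, in particular dispatching the degenerate case $\av{f}_L = 0$ and justifying $\cE(\lambda^{-1}f) \le 1$ for $\lambda > \av{f}_L$ from the convexity and monotonicity of $\lambda \mapsto \cE(\lambda^{-1}f)$. Once that is settled, both (b) and (c) follow immediately from the symmetry of the unit ball and from $\cE^{**}=\cE$, respectively.
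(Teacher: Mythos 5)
Your proposal follows the paper's proof essentially step for step: the triangle inequality by telescoping, the Luxemburg bounds by the scaling argument ($\lambda > \av{f}_L$ implies $\cE(\lambda^{-1}f) \leq 1$, then $\lambda \searrow \av{f}_L$), part (b) by invariance of the sublevel set under $f \mapsto -f$, and part (c) by evaluating the identity $\av{\varphi}_{O,\cE^*} = \sup\{(\varphi,f) \mid \cE(f) \leq 1\}$ (valid since $\cE^{**}=\cE$) at $\varphi = \delta_x - \delta_y$ and $\varphi = \delta_x$. This is exactly how the paper argues; its proof merely compresses (a) and (b) to ``trivial''. Your separate treatment of the degenerate case $\av{f}_L = 0$ is harmless but not strictly needed: for every $\lambda > \av{f}_L$ one has $f(x)-f(y) \leq \lambda R(x,y)$, and taking the infimum over such $\lambda$ covers both cases at once, using only $R(x,y)<\infty$.

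The one step to flag is the inequality $R(x,z) \leq R_\infty(x) + R_\infty(z)$ in (a). You bound $-f(z) \leq R_\infty(z)$ by invoking the invariance of $\{\cE \leq 1\}$ under $f \mapsto -f$, i.e.\ the symmetry of $\cE$ --- but part (a) does not assume symmetry; the standing assumption is only that $\cE$ is convex with $\cE(0)=0$. This is not a repairable oversight in your write-up: without symmetry the inequality is genuinely false. For instance, on $X = \{x,z\}$ take $\cE(f) = f(x)_+ + f(z)_+$, which is convex with $\cE(0)=0$; the functions with $f(x)=1$, $f(z)=-n$ lie in the unit sublevel set, so $R(x,z) = \infty$, while $R_\infty(x) = R_\infty(z) = 1$. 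Thus your argument proves the inequality under the hypothesis it actually requires (symmetry, exactly as in (b)), which is also the only setting in which the paper later uses it (Theorem~\ref{theorem:charcterization lower semicontinuity trivial kernel} assumes $\cE$ symmetric); the paper's own ``trivial'' hides the same issue. In short, relative to the literal statement both your proof and the paper's have a gap at this single inequality, but the defect lies in the statement rather than in your reasoning, and your appeal to $f \mapsto -f$ invariance in effect identifies the missing hypothesis.
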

\begin{proof}
 (a): The first two inequalities are trivial. For the third inequality assume that $f \in M(\cE)$ and let $\lambda > \av{f}_L$. Then $\cE(f/\lambda) \leq 1$ and $|f(x)/\lambda - f(y)/\lambda| \leq R(x,y)$. Letting $\lambda \searrow \av{f}_L$ yields the claim. The fourth inequality can be proven similarly. 
 
 (b): This is trivial. 
 
 (c): Since $f(x) - f(y) = (\delta_x - \delta_y,f)$ and $f(x) = (\delta_x,f)$, both identities  follow directly from $\cE^{**} = \cE$ and the definition of the Orlicz functional.
\end{proof}

For the following corollary recall that $\av{\cdot}_{L'}$ denotes the operator norm on $(M(\cE),\av{\cdot}_L)'$.

\begin{corollary}\label{coro:elementary resistance as dual norm}
Assume that $\cE$ is symmetric and left-continuous and let $x,y \in X$. 
\begin{enumerate}[(a)]
 \item $R(x,y) < \infty$ if and only if $\delta_x - \delta_y \in M(\cE)'$ and 
 $$R(x,y) = \av{\delta_x - \delta_y}_{L'}.$$
 \item $R_\infty(x)  < \infty$ if and only if $\delta_x \in M(\cE)'$ and
 $$R_\infty(x) = \av{\delta_x}_{L'}$$
\end{enumerate}
\end{corollary}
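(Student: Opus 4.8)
The plan is to recognize both elementary resistances directly as the operator norms in question by unwinding the relevant definitions; the only real input is the interplay between left-continuity and the Luxemburg seminorm. I would treat (a) in detail and note that (b) is proven verbatim with $\delta_x - \delta_y$ replaced by $\delta_x$ and $R(x,y)$ by $R_\infty(x)$.

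First I would record that, since $\cE$ is symmetric, Proposition~\ref{proposition: some basic inequalities}(b) gives
$$R(x,y) = \sup\{|f(x) - f(y)| \mid \cE(f) \leq 1\}.$$
Next, because $\cE$ is left-continuous, the identity of unit balls noted in Subsection~\ref{subsection:convex functionals} yields $\{f \in \cF(X) \mid \cE(f) \leq 1\} = \{f \in \cF(X) \mid \av{f}_L \leq 1\}$. Moreover, since by convention $\av{\cdot}_L = \infty$ on $\cF(X) \setminus M(\cE)$, the condition $\av{f}_L \leq 1$ already forces $f \in M(\cE)$, so this common unit ball is exactly $\{f \in M(\cE) \mid \av{f}_L \leq 1\}$.

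With these identifications in hand, $\delta_x - \delta_y$ restricts to a linear functional on $M(\cE)$ with $(\delta_x - \delta_y, f) = f(x) - f(y)$, and its operator norm with respect to $\av{\cdot}_L$ is by definition
$$\av{\delta_x - \delta_y}_{L'} = \sup\{|f(x) - f(y)| \mid f \in M(\cE),\ \av{f}_L \leq 1\},$$
which by the previous step coincides with $R(x,y)$. Finally, since a linear functional on a seminormed space lies in the continuous dual precisely when it is bounded, i.e.\ when its operator norm is finite, the equality $R(x,y) = \av{\delta_x - \delta_y}_{L'}$ simultaneously shows that $R(x,y) < \infty$ if and only if $\delta_x - \delta_y \in M(\cE)'$, which is exactly (a).

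I do not expect a genuine obstacle here: the content is entirely in correctly matching the two suprema. The one point requiring care is that the supremum defining $R$ ranges over $\{\cE(f) \leq 1\}$ while the operator norm ranges over the $\av{\cdot}_L$-unit ball of $M(\cE)$; bridging these is precisely where both left-continuity (to equate the $\cE$- and $\av{\cdot}_L$-sublevel sets) and the convention $\av{\cdot}_L = \infty$ on $\cF(X) \setminus M(\cE)$ (to confine the supremum to $M(\cE)$) are used. One should also note that $\cE(f) \leq 1$ forces $f \in D(\cE) \subseteq M(\cE)$, so no function outside $M(\cE)$ ever enters the supremum defining $R$.
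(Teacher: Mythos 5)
Your proof is correct and follows exactly the paper's route: the paper's own proof is precisely the two ingredients you identify, namely Proposition~\ref{proposition: some basic inequalities}(b) for the symmetric form of $R$ and the left-continuity-induced identity of the unit balls $\{\cE \leq 1\} = \{\av{\cdot}_L \leq 1\}$. The additional bookkeeping you supply (the convention $\av{\cdot}_L = \infty$ off $M(\cE)$, the inclusion $D(\cE) \subseteq M(\cE)$, and finiteness of the operator norm being equivalent to membership in the dual) is exactly what the paper leaves implicit.
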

\begin{proof}
 This follows from Proposition~\ref{proposition: some basic inequalities}~(b) and that $\cE(f) \leq 1$ if and only if $\av{f}_L \leq 1$, which follows from the left-continuity of $\cE$. 
\end{proof}

\begin{remark}
 For symmetric lower semicontinuous $\cE$ the previous corollary is just a special case of one of the identities in Corollary~\ref{coro:dual spaces}.
\end{remark}

%
%


\begin{corollary}\label{coro:kernel of e for finite elementary resistance}
 If $R(x,y) < \infty$ for all $x,y \in X$, then $\ker \av{\cdot}_L \subset \R  \cdot 1$.  
\end{corollary}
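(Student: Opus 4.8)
The plan is to exploit the third inequality in Proposition~\ref{proposition: some basic inequalities}(a), which turns a vanishing Luxemburg norm into a pointwise comparison controlled by the elementary resistance. First I would unwind the hypothesis $f \in \ker \av{\cdot}_L$: by definition this means $f \in M(\cE)$ with $\av{f}_L = 0$. Since every pair $x,y \in X$ satisfies $R(x,y) < \infty$ by assumption, the quoted inequality applies and gives
$$f(x) - f(y) \leq R(x,y)\,\av{f}_L = 0.$$

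The key observation is then that the same reasoning applies with the roles of $x$ and $y$ interchanged: again $R(y,x) < \infty$, so $f(y) - f(x) \leq R(y,x)\,\av{f}_L = 0$. Combining the two inequalities yields $f(x) = f(y)$. As $x,y \in X$ were arbitrary, $f$ is constant on $X$, i.e.\ $f \in \R \cdot 1$, which is exactly the claimed inclusion $\ker \av{\cdot}_L \subset \R \cdot 1$.

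I would emphasize that no symmetry assumption on $\cE$ is needed here, since the argument uses the (possibly distinct) resistances $R(x,y)$ and $R(y,x)$ separately rather than treating $\av{\cdot}_L$ as a seminorm; the only ingredient is the finiteness of the elementary resistance together with the bound of Proposition~\ref{proposition: some basic inequalities}(a). There is no real obstacle to this proof: the entire content is the pointwise estimate $f(x)-f(y) \leq R(x,y)\av{f}_L$, and finiteness of $R(x,y)$ is precisely what allows the right-hand side to collapse to zero once $\av{f}_L = 0$. Should one prefer to avoid citing that inequality, the same conclusion follows directly from the definitions: $\av{f}_L = 0$ forces $\cE(\mu f) \leq 1$ for all $\mu > 0$, whence $\mu\bigl(f(x)-f(y)\bigr) \leq R(x,y)$ for every $\mu > 0$, and letting $\mu \to \infty$ gives $f(x) \leq f(y)$ (and symmetrically $f(y) \leq f(x)$).
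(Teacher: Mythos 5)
Your proof is correct and is essentially the paper's own argument: the paper deduces the corollary directly from the inequality $|f(x) - f(y)| \leq (R(x,y) \vee R(y,x)) \av{f}_L$ for $f \in M(\cE)$, which is exactly your two one-sided estimates (using $R(x,y)$ and $R(y,x)$ separately) combined into one, and like you it needs no symmetry of $\cE$. Your closing remark unwinding the definitions ($\cE(\mu f) \leq 1$ for all $\mu > 0$, then $\mu \to \infty$) is a valid self-contained variant, but it is just the proof of that same inequality inlined.
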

\begin{proof}
 This is a direct consequence of the inequality 
 \begin{align*}
 |f(x) - f(y)| &\leq (R(x,y) \vee R(y,x)) \av{f}_L, \quad f \in M(\cE). \hfill \qedhere
 \end{align*}
\end{proof}

 \begin{remark}[Kernel for functionals with finite elementary resistance]
Due to the positive $1$-homogeneity of $\av{\cdot}_L$, the kernel of $\av{\cdot}_L$ is a positive cone. It is even a vector space if $\cE$ is symmetric. Hence, the previous corollary shows that finiteness of the elementary resistance leaves precisely four options for $\ker \av{\cdot}_L$:  It can only be equal to $\{0\}$, $\pm [0,\infty) \cdot 1$ or $\R \cdot 1$. In the symmetric case, the only possibilities are $\{0\}$ and  $\R \cdot 1$ and at some points below we will have to distinguish between them. 
\end{remark}

Recall that if $\cE$ is symmetric, then $\mathfrak{P}_\cE$ denotes the locally convex topology generated by the seminorms $|\delta_x|$, $x \in X$, (which generate $\mathfrak P$) and $\av{\cdot}_L$.

\begin{corollary}\label{coro:metrizability form topology}
 Let $o \in X$. If $\cE$ is symmetric and $R(x,y) < \infty$ for all $x,y \in X$, then the topology $\mathfrak{P}_\cE$ on $M(\cE)$ is generated by the norm $\av{\cdot}_L  + |\delta_o|$.  In particular, the for $o' \in X$ the norms $\av{\cdot}_L  + |\delta_o|$ and $\av{\cdot}_L  + |\delta_{o'}|$ are equivalent. 
\end{corollary}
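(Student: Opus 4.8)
The plan is to prove that the two topologies coincide by checking that each is contained in the other, after first verifying that $\av{\cdot}_L + |\delta_o|$ is genuinely a norm and not merely a seminorm. Since $\cE$ is symmetric, $\av{\cdot}_L$ is a seminorm, so $\av{\cdot}_L + |\delta_o|$ is a seminorm. For definiteness, suppose $f \in M(\cE)$ satisfies $\av{f}_L + |f(o)| = 0$. Then $\av{f}_L = 0$, so $f \in \ker \av{\cdot}_L$, and since $R(x,y) < \infty$ for all $x,y \in X$, Corollary~\ref{coro:kernel of e for finite elementary resistance} gives $f \in \R \cdot 1$, i.e.\ $f$ is constant. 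As also $f(o) = 0$, this constant vanishes and $f = 0$. Hence $\av{\cdot}_L + |\delta_o|$ is a norm on $M(\cE)$.

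Next I would show that the norm topology is contained in $\mathfrak{P}_\cE$. By definition $\mathfrak{P}_\cE$ is generated by $\av{\cdot}_L$ together with the seminorms $|\delta_x|$, $x \in X$, so in particular $\av{\cdot}_L$ and $|\delta_o|$ are $\mathfrak{P}_\cE$-continuous; thus their sum $\av{\cdot}_L + |\delta_o|$ is $\mathfrak{P}_\cE$-continuous and every open norm ball is $\mathfrak{P}_\cE$-open. For the reverse inclusion it suffices to dominate each generating seminorm of $\mathfrak{P}_\cE$ by a multiple of the norm. Trivially $\av{f}_L \le \av{f}_L + |f(o)|$. For the point evaluations I would invoke the inequality from Proposition~\ref{proposition: some basic inequalities}(a) in the symmetric form (as displayed in the proof of Corollary~\ref{coro:kernel of e for finite elementary resistance}): for $x \in X$ and $f \in M(\cE)$,
$$|f(x)| \le |f(x) - f(o)| + |f(o)| \le R(x,o)\,\av{f}_L + |f(o)| \le (R(x,o) \vee 1)\bigl(\av{f}_L + |f(o)|\bigr).$$
Hence each $|\delta_x|$ is continuous with respect to $\av{\cdot}_L + |\delta_o|$, so $\mathfrak{P}_\cE$ is contained in the norm topology. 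Combining the two inclusions shows that $\mathfrak{P}_\cE$ on $M(\cE)$ is generated by the single norm $\av{\cdot}_L + |\delta_o|$.

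For the \emph{in particular} statement I would apply the result once with $o$ and once with $o'$: both $\av{\cdot}_L + |\delta_o|$ and $\av{\cdot}_L + |\delta_{o'}|$ then generate the same topology $\mathfrak{P}_\cE$, and two norms inducing the same topology on a vector space are equivalent. Alternatively, the equivalence constants can be read off directly from the same estimate, since $|f(o')| \le R(o',o)\av{f}_L + |f(o)|$ yields $\av{f}_L + |f(o')| \le (1 + R(o',o))(\av{f}_L + |f(o)|)$, and swapping $o$ and $o'$ gives the opposite bound.

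I expect no serious obstacle here: the argument is a routine two-sided continuity check built on the resistance inequality of Proposition~\ref{proposition: some basic inequalities}. The only point requiring a little care is the definiteness of the norm, which genuinely uses the hypothesis $R(x,y) < \infty$ via the kernel characterization in Corollary~\ref{coro:kernel of e for finite elementary resistance}; without finiteness of all elementary resistances the kernel of $\av{\cdot}_L$ could be strictly larger than $\R \cdot 1$ and $\av{\cdot}_L + |\delta_o|$ would fail to be a norm.
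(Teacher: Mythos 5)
Your proposal is correct and follows essentially the same route as the paper: the key step in both is the estimate $|f(x)| \leq |f(x)-f(o)| + |f(o)| \leq R(x,o)\av{f}_L + |f(o)|$ to dominate the point evaluations, with definiteness of $\av{\cdot}_L + |\delta_o|$ coming from Corollary~\ref{coro:kernel of e for finite elementary resistance} and the equivalence of the two norms from the fact that norms generating the same topology are equivalent. You merely spell out the two-sided inclusion and the explicit equivalence constants, which the paper leaves implicit.
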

\begin{proof}
 By definition the topology $\mathfrak{P}_\cE$ is generated by the family of seminorms $|\delta_x|$, $x \in X$, and $\av{\cdot}_L$. For fixed $o \in X$ the finiteness of $R$ implies 
 $$|\delta_x(f)| = |f(x)| \leq |f(x) - f(o)| + |f(o)| \leq R(x,o) \av{f}_L + |f(o)|$$
 for all $f \in M(\cE)$. This shows that the topology $\mathfrak{P}_\cE$ is generated by the one seminorm $\av{\cdot}_L + |\delta_o|$. That it is indeed a norm follows from the previous corollary. 
 
 The 'in particular' statement follows from the fact that norms generating the same topology are equivalent.  
\end{proof}

 \begin{remark}
  This corollary can (and will) be used to produce examples where the topology $\mathfrak{P}_\cE$ is metrizable even though $\mathfrak{P}$ is not metrizable because $X$ is uncountable, see e.g. Example~\ref{example:failure of equivalence}.
 \end{remark}

\subsection{Lower semicontinuity}\label{subsection:lower semicontinuity}

In the next two theorems we characterize lower semicontinuity of symmetric and reflexive $\cE$ under the finiteness assumption on the elementary resistance. We have to treat the two possible cases $\ker \av{\cdot}_L = \R \cdot 1$ and $\ker \av{\cdot}_L = \{0\}$ separately.

\begin{theorem}\label{theorem:charcterization lower semicontinuity constant kernel}
Assume that $\cE$ is  symmetric and reflexive. Assume further $\ker \av{\cdot}_L = \R \cdot 1$ and $R(x,y) < \infty$ for all $x,y \in X$. Then the following assertions are equivalent:  
 \begin{enumerate}[(i)]
  \item $\cE$ is (sequentially) lower semicontinuous.
  \item $(M(\cE)/\R \cdot 1,\av{\cdot}_L)$ is a Banach space and $\cE$ is left-continuous.
 \end{enumerate}

\end{theorem}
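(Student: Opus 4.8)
The key observation is that the finiteness of the elementary resistance together with $\ker\av{\cdot}_L = \R\cdot 1$ puts us essentially in the setting of Theorem~\ref{theorem:lower semicontinuity versus completeness} (and Theorem~\ref{theorem:two imply the third}), once we verify that the relevant quotient topology is metrizable. So the plan is to reduce the statement to the abstract machinery already developed, by checking that the hypotheses there are met in the concrete space $(\cF(X),\mathfrak{P})$. Recall that $(\cF(X),\mathfrak{P})$ is Hausdorff and complete, so the completeness hypothesis on $(V,\mathfrak{T})$ is automatic.

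\emph{First} I would pin down the metrizability. By Corollary~\ref{coro:metrizability form topology}, the finiteness of $R$ implies that on $M(\cE)$ the topology $\mathfrak{P}_\cE$ is generated by the single norm $\av{\cdot}_L + |\delta_o|$ for any fixed $o \in X$; in particular $\mathfrak{P}_\cE$ is metrizable. This is exactly the metrizability hypothesis needed to run Theorem~\ref{theorem:lower semicontinuity versus completeness} (the equivalence of lower semicontinuity and sequential lower semicontinuity) and Theorem~\ref{theorem:two imply the third}. Hence the parenthetical ``(sequentially)'' in (i) is justified immediately: under our standing assumptions the two notions coincide.

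\emph{Next} I would invoke Theorem~\ref{theorem:lower semicontinuity versus completeness}, which gives that (i) is equivalent to: $(M(\cE),\mathfrak{P}_\cE)$ is complete and $\cE$ is left-continuous. It remains to translate the completeness of $(M(\cE),\mathfrak{P}_\cE)$ into the Banach space condition on the quotient $(M(\cE)/\R\cdot 1,\av{\cdot}_L)$ appearing in (ii). Here the hypothesis $\ker\av{\cdot}_L = \R\cdot 1$ enters decisively. Using the description of the quotient topology recalled before the proof of Theorem~\ref{theorem:two imply the third}, the topology $\mathfrak{P}_\cE/\ker\av{\cdot}_L$ on $M(\cE)/\R\cdot 1$ is generated by $\av{\cdot}_L$ together with the restriction of $\mathfrak{P}/\R\cdot 1$. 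Because $R(x,y) < \infty$ for all $x,y$, the point-evaluation differences $\delta_x - \delta_y$ are all $\av{\cdot}_L$-bounded, so on the quotient the seminorms coming from $\mathfrak{P}$ are already controlled by $\av{\cdot}_L$; consequently $\mathfrak{P}_\cE/\R\cdot 1$ coincides with the norm topology induced by $\av{\cdot}_L$ on $M(\cE)/\R\cdot 1$. Since $\R\cdot 1$ is a closed subspace of the metrizable complete space $(M(\cE),\mathfrak{P}_\cE)$ precisely when $\cE$ is lower semicontinuous (so that $\av{\cdot}_L$ is lower semicontinuous and its kernel is closed), completeness of $(M(\cE),\mathfrak{P}_\cE)$ passes to completeness of the quotient, i.e. to $(M(\cE)/\R\cdot 1,\av{\cdot}_L)$ being a Banach space. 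Conversely, given (ii), the Banach space property reconstructs completeness of $(M(\cE),\mathfrak{P}_\cE)$ by splitting a $\mathfrak{P}_\cE$-Cauchy net into its $\mathfrak{P}$-limit (which exists by completeness of $\cF(X)$) and its $\av{\cdot}_L$-limit on the quotient, and matching them as in the proof of (ii) \& (iii) $\Rightarrow$ (i) in Theorem~\ref{theorem:two imply the third}.

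\emph{The main obstacle} I anticipate is the careful bookkeeping identifying $\mathfrak{P}_\cE/\R\cdot 1$ with the $\av{\cdot}_L$-norm topology on the quotient, and ensuring that completeness transfers cleanly in both directions through the quotient by $\R\cdot 1$. Once this identification is in place, the equivalence is essentially a specialization of Theorem~\ref{theorem:two imply the third} with condition (ii) there (continuity of the quotient embedding) being automatically satisfied thanks to the finiteness of $R$. I would phrase the argument so as to apply that theorem directly, verifying its hypothesis (ii) via Corollary~\ref{coro:metrizability form topology}, rather than reproving completeness from scratch.
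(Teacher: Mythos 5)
Your proposal is correct and takes essentially the same route as the paper: invoke Theorem~\ref{theorem:two imply the third} (whose proof already runs through Theorem~\ref{theorem:lower semicontinuity versus completeness}), get metrizability of $\mathfrak{P}_\cE$ from Corollary~\ref{coro:metrizability form topology}, and use finiteness of $R$ to show that the quotient embedding $(M(\cE)/\R\cdot 1,\av{\cdot}_L)\to(\cF(X)/\R\cdot 1,\mathfrak{P}/\R\cdot 1)$ is continuous, which is exactly the paper's key computation with the neighborhoods $U_{K,\varepsilon}$ and the representative $f - f(o)$. One small correction: $\R\cdot 1=\ker\av{\cdot}_L$ is closed in $(M(\cE),\mathfrak{P}_\cE)$ unconditionally (since $\av{\cdot}_L$ is $\mathfrak{P}_\cE$-continuous) and in $(\cF(X),\mathfrak{P})$ as well, not ``precisely when'' $\cE$ is lower semicontinuous, but this slip does not affect your argument.
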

\begin{proof}
We use Theorem~\ref{theorem:two imply the third}. Since by Corollary~\ref{coro:metrizability form topology} the topology $\mathfrak{P}_\cE$ is metrizable, it suffices to show the continuity of the embedding 
$$(M(\cE)/ \R \cdot 1, \av{\cdot}_L) \to (\cF(X) / \R \cdot 1, \mathfrak{P}/\R \cdot 1), \quad f + \R \cdot 1 \mapsto  f + \R \cdot 1.$$

A basis of zero neighborhoods for the quotient topology $\mathfrak{P} / \R \cdot 1$ is given by 
$$U_{K,\varepsilon} = \{f + \R \cdot 1 \mid f \in \cF(X) \text{ with } |f(x)| < \varepsilon \text{ for all } x\in K\},$$
$\varepsilon > 0$ and $K \subset X$ finite (this is the image of a basis of zero neighborhoods in $\mathfrak{P}$ under the quotient map). For given $K \subset X$ finite and fixed $o \in K$, we let $C = \max \{R(x,o) \mid x \in K\}$. If $f + \R \cdot 1 \in M(\cE)/\R \cdot 1$ with $\av{f}_L < \varepsilon/(1 + C)$, then 
$$|f(x) - f(o)| \leq R(x,o) \av{f}_L < \varepsilon$$
for all $x \in K$. Since $f + \R \cdot 1 = (f - f(o)) + \R \cdot 1$, this implies $f + \R \cdot 1 \in U_{K,\varepsilon}$ and we obtain the continuity of the embedding.  
\end{proof}

\begin{theorem}\label{theorem:charcterization lower semicontinuity trivial kernel}
Assume that $\cE$ is symmetric and reflexive. The following assertions are equivalent: 
 \begin{enumerate}[(i)]
  \item $\cE$ is lower semicontinuous with $\ker \av{\cdot}_L = \{0\}$ and $R(x,y) < \infty$ for all $x,y \in X$. 
  \item $\cE$ is left-continuous, $(M(\cE),\av{\cdot}_L)$ is a Banach space and $R_\infty(x) < \infty$ for all $x \in X$. 
 \end{enumerate}
\end{theorem}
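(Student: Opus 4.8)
The plan is to prove the equivalence by reducing it to the already-established Theorem~\ref{theorem:charcterization lower semicontinuity constant kernel} via the boundary-point construction of Remark~\ref{remark:resistance to boundary point}. The key idea is that the condition $\ker\av{\cdot}_L = \{0\}$ together with finiteness of $R_\infty$ should match up with the condition $\ker\av{\cdot}_L = \R\cdot 1$ for an auxiliary functional $\cE_\Delta$ on the enlarged set $\hat X = X \cup \{\Delta\}$. The point is that adjoining $\Delta$ and passing from $\cE$ to $\cE_\Delta(f) = \cE(f|_X - f(\Delta))$ makes the functional translation-invariant in the constant direction (so its Luxemburg kernel is forced to contain $\R\cdot 1$), while finiteness of $R_\infty$ for $\cE$ corresponds exactly to finiteness of $R_{\cE_\Delta}(x,\Delta)$, i.e.\ finiteness of the elementary resistance to the new point.

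First I would verify that $\cE_\Delta$ inherits the structural hypotheses: it is convex with $\cE_\Delta(0) = 0$, it is symmetric whenever $\cE$ is, and $\cE_\Delta$ is left-continuous (resp.\ lower semicontinuous) if and only if $\cE$ is, since $f \mapsto f|_X - f(\Delta)$ is a continuous linear surjection $\cF(\hat X) \to \cF(X)$ with kernel $\R\cdot 1$. Reflexivity of $\cE$ should likewise transfer to $\cE_\Delta$, essentially because $(M(\cE_\Delta), \av{\cdot}_{L,\cE_\Delta})$ modulo its kernel $\R\cdot 1$ is isometrically the completion-relevant copy of $(M(\cE), \av{\cdot}_{L,\cE})$. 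Next I would compute the Luxemburg data: one checks $\av{f}_{L,\cE_\Delta} = \av{f|_X - f(\Delta)}_{L,\cE}$, so that $\ker\av{\cdot}_{L,\cE_\Delta} = \{f \in \cF(\hat X) \mid f|_X - f(\Delta) \in \ker\av{\cdot}_{L,\cE}\}$. Under the assumption $\ker\av{\cdot}_{L,\cE} = \{0\}$ this becomes exactly $\R\cdot 1$ on $\hat X$, and by Remark~\ref{remark:resistance to boundary point} we have $R_{\cE_\Delta}(x,y) = R_\cE(x,y)$ for $x,y \in X$ and $R_{\cE_\Delta}(x,\Delta) = R_{\infty,\cE}(x)$; hence finiteness of all elementary resistances of $\cE_\Delta$ on $\hat X$ is equivalent to the conjunction of $R_\cE(x,y) < \infty$ and $R_{\infty,\cE}(x) < \infty$.

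With these dictionary entries in place, applying Theorem~\ref{theorem:charcterization lower semicontinuity constant kernel} to $\cE_\Delta$ on $\hat X$ directly gives the equivalence between sequential/full lower semicontinuity of $\cE_\Delta$ (hence of $\cE$) and the Banach-space property of $(M(\cE_\Delta)/\R\cdot 1, \av{\cdot}_L)$ together with left-continuity. The final translation step is to identify $(M(\cE_\Delta)/\R\cdot 1, \av{\cdot}_L)$ with $(M(\cE), \av{\cdot}_L)$ isometrically via $[f] \mapsto f|_X - f(\Delta)$; since in statement~(i) the kernel of $\av{\cdot}_{L,\cE}$ is trivial, this map is a genuine isomorphism and completeness transfers back and forth, yielding that $(M(\cE), \av{\cdot}_L)$ is a Banach space precisely when the quotient for $\cE_\Delta$ is.

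\emph{The main obstacle} I anticipate is the careful bookkeeping of the two directions of the equivalence, in particular ensuring that the kernel hypothesis $\ker\av{\cdot}_L = \{0\}$ is genuinely forced in direction (ii)$\Rightarrow$(i) rather than merely assumed: one must argue that finiteness of $R_\infty(x)$ for all $x$, via the bound $f(x) \leq R_\infty(x)\av{f}_L$ from Proposition~\ref{proposition: some basic inequalities}~(a), rules out any nonzero element of the kernel, so that the trivial-kernel conclusion in (i) comes for free from the finiteness of $R_\infty$ in (ii). If one prefers to avoid the auxiliary-space reduction, the alternative is a direct argument mirroring the proof of Theorem~\ref{theorem:charcterization lower semicontinuity constant kernel}, using Corollary~\ref{coro:metrizability form topology} to see that $\mathfrak{P}_\cE$ is metrizable and Theorem~\ref{theorem:two imply the third} with trivial kernel; there the continuity of the embedding $(M(\cE), \av{\cdot}_L) \to (\cF(X), \mathfrak{P})$ is what now requires finiteness of $R_\infty(x)$ (to control $|f(x)| \leq R_\infty(x)\av{f}_L$ on a finite $K$) rather than finiteness of $R(x,o)$, and this is the step where the extra hypothesis $R_\infty < \infty$ is consumed.
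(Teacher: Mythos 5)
There is a genuine gap in the direction (i) $\Rightarrow$ (ii), and it sits exactly where your reduction needs to get off the ground. Theorem~\ref{theorem:charcterization lower semicontinuity constant kernel} carries as \emph{standing hypotheses} both $\ker \av{\cdot}_L = \R \cdot 1$ and finiteness of the elementary resistance between \emph{all} pairs of points. By your own dictionary, applying it to $\cE_\Delta$ on $\hat X$ therefore requires finiteness of $R_\cE(x,y)$ for $x,y \in X$ \emph{and} of $R_{\cE_\Delta}(x,\Delta) = R_{\infty,\cE}(x)$ for all $x \in X$. But assertion (i) of the present theorem only supplies finiteness of $R_\cE$ on $X \times X$; finiteness of $R_\infty$ is part of the \emph{conclusion} (ii). So in the forward direction you cannot invoke Theorem~\ref{theorem:charcterization lower semicontinuity constant kernel} for $\cE_\Delta$ at all until you have independently proven $R_\infty(x) < \infty$, which is precisely the nontrivial content of this direction. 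Your ``main obstacle'' paragraph addresses only the reverse direction (deriving the trivial kernel from finite $R_\infty$ via $|f(x)| \leq R_\infty(x)\av{f}_L$, which is correct), and your alternative direct route likewise treats $R_\infty < \infty$ as a hypothesis ``to be consumed''; neither explains how to \emph{derive} it from (i).

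The missing argument, which is the crux of the paper's proof, runs as follows: assume (i) and suppose $R_\infty(x) = \infty$ for some $x$. Since lower semicontinuity implies left-continuity, $\{\cE \leq 1\} = \{\av{\cdot}_L \leq 1\}$, so one finds a sequence $(f_n)$ in $M(\cE)$ with $1 = |f_n(x)| \geq n \av{f_n}_L$; then $\av{f_n}_L \to 0$, and finiteness of $R$ gives $|f_n(x) - f_n(y)| \leq R(x,y)\av{f_n}_L \to 0$ for every $y$, i.e.\ $f_n \to 1$ pointwise. Lower semicontinuity of $\av{\cdot}_L$ with respect to pointwise convergence (Lemma~\ref{lemma:equivalence of lower semicontinuity}) then yields $\av{1}_L \leq \liminf_n \av{f_n}_L = 0$, contradicting $\ker \av{\cdot}_L = \{0\}$. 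Once this step is supplied, your reduction via $\cE_\Delta$ does go through (the remaining dictionary entries — transfer of symmetry, reflexivity, lower semicontinuity, and the isometric identification of $(M(\cE_\Delta)/\R\cdot 1, \av{\cdot}_L)$ with $(M(\cE),\av{\cdot}_L)$ — are routine to verify), as does the direct route via Corollary~\ref{coro:metrizability form topology} and Theorem~\ref{theorem:two imply the third}, which is what the paper actually does; and the (ii) $\Rightarrow$ (i) half of your proposal is correct as stated.
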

\begin{proof}
Again we use Theorem~\ref{theorem:two imply the third}. Since $R(x,y) \leq R_\infty(x) + R_\infty(y)$, either of the Assertions (i) and (ii) imply $R(x,y) < \infty$ for all $x,y \in X$. Hence, by Corollary~\ref{coro:metrizability form topology} the topology $\mathfrak{P}_\cE$ is metrizable. According to Theorem~\ref{theorem:two imply the third}, it suffices to show that either of the assertions imply the continuity of the embedding 
$$(M(\cE),\av{\cdot}_L) \to (\cF(X),\mathfrak{P}), \quad f \mapsto f.$$
If  $R_\infty(x) < \infty$ for all $x \in X$, then  this continuity follows from the inequality $|f(x)| \leq R_\infty(x) \av{f}_L$, $f \in M(\cE)$ and $x \in X$. Since this is already an assumption in (ii),  it remains to prove the finiteness of $R_\infty$ assuming (i). 

Assume (i) and further assume there exists $x \in X$ with $R_\infty(x) = \infty$. Using $\cE(f) \leq 1$ if and only if $\av{f}_L \leq 1$, we find a sequence $(f_n)$ in $M(\cE)$ with 
$$1 = |f_n(x)| \geq n \av{f_n}_L, \, n \in \N.$$
This implies $\av{f_n}_L \to 0$ and hence 
$$|f_n(x) - f_n(y)| \leq R(x,y) \av{f_n}_L \to 0,$$
i.e., $f_n \to 1$ pointwise. Using the lower semicontinuity of $\av{\cdot}_L$ with respect to pointwise convergence (see Lemma~\ref{lemma:equivalence of lower semicontinuity}), we infer 
$$\av{1}_L \leq \liminf_{n \to \infty} \av{f_n}_L = 0,$$
a contradiction to  $\ker \av{\cdot}_L = \{0\}$.  
\end{proof}

\begin{remark}\label{remark:lower semicontinuity without reflexivitiy}
 In both theorems on lower semicontinuity the implication (i)  $\Rightarrow$ (ii) holds without reflexivity of $\cE$. Reflexivity is needed for the converse implication, as the next  example shows. Its lack of lower semicontinuity is well-known, we  provide details to show that it fits into our framework.
\end{remark}

\begin{example}[Failure of Theorem~\ref{theorem:charcterization lower semicontinuity constant kernel} without reflexivity] \label{example:failure of equivalence}
 Consider the functional 
 $$\cE \colon \cF([-1,1]) \to [0,\infty], \quad \cE(f) = \begin{cases}
                                                          \int_{-1}^1 |f'| dx &\text{if } f  \in AC([-1,1])\\
                                                          \infty &\text{else}
                                                         \end{cases}.
$$
Here, $AC([-1,1])$ denotes the space of absolutely continuous functions on the interval $[-1,1]$  and $f'$ denotes their a.s. existing derivative, which belongs to $L^1$. Then    $R(t,s) < \infty$ for all $t,s \in [-1,1]$ and $\ker \av{\cdot}_L = \R \cdot 1$. Moreover, $\av{\cdot}_L = \cE$ and  the normed space $(M(\cE),\mathfrak{P}_\cE) = (AC([-1,1]),\cE + |\delta_0|)$ is complete but $\cE$ is not lower semicontinuous.
\end{example}

\begin{proof} Due to its $1$-homogeneity, $\cE$ equals its Luxemburg seminorm and $M(\cE) = D(\cE) = AC([-1,1])$. The statement on the kernel is immediate once we prove finiteness of the elementary resistance.  For   $f \in AC([-1,1])$ the fundamental theorem of calculus for absolutely continuous functions implies 
$$|f(t) - f(s)| = \left|\int_s^t f' dx  \right|\leq \int_{-1}^1 |f'| dx, $$
and we obtain $R(t,s) \leq 1$ for all $t,s \in [-1,1]$.   This yields 
$$\av{f}_\infty \leq \cE(f) + |f(0)|$$
and that the topology $\mathfrak{P}_\cE$ on $AC([-1,1])$ is induced by the norm $\cE + |\delta_0|$. Now assume that $(f_n)$ is  $\cE + |\delta_0|$ - Cauchy. Using the previous inequality and the definition of $\cE$, we find $f \in C([-1,1])$ and $g \in L^1([-1,1])$, with $f_n \to f$ uniformly and $f_n' \to g$ in $L^1$. For $t \in [-1,1]$ this implies 
$$f(t) = \lim_{n \to \infty} f_n(t) = \lim_{n \to \infty} \left(f_n(0) + \int_0^t f_n' dx \right) = f(0) + \int_0^t g dx.$$
We obtain $f \in AC([-1,1])$ and $f_n \to f$ with respect to $\cE + |\delta_0|$. 

The lack of lower semicontinuity can be seen as follows: Consider the sequence $(f_n)$ defined by $f_n(t) = 1_{[1/n,1]}(t) + nt 1_{[0,1/n)}$. Then $f_n \in AC([-1,1])$ with $f_n' = n  1_{[0,1/n)}$ and so $\cE(f_n) \leq 1$. We also have $f_n \to 1_{(0,1]}$ pointwise but $1_{(0,1]}$ is not absolutely continuous, showing that $\cE$ is not lower semicontinuous with respect to pointwise convergence.
\end{proof}

\subsection{Approximations via finite subsets} \label{subsection:finite approximation}

In this subsection we discuss how lower semicontinuous $\cE$ can be approximated with the help of functionals on finite subsets.

For a finite set $K \subset X$, $\alpha > 0$ and $1 \leq p < \infty$ we define the approximating functionals $\cE^{(\alpha,K)} =\cE^{(\alpha,K)}_p \colon \cF(X) \to [0,\infty]$ by
$$\cE^{(\alpha,K)}(f) = \inf \{\cE(g) + \alpha \sum_{x \in K} |f(x) - g(x)|^p \mid g \in \cF(X)\}. $$
Moreover, for $f \in \cF(K)$ we let $\iota_K f \colon X \to \R$ with $\iota_K f(x) = f(x)$ for $x \in K$ and $\iota f_K(x) = 0$ for $x \in X \setminus K$.

\begin{proposition}\label{proposition:basic properties approximating forms}
\begin{enumerate}[(a)]
 \item   $\cE^{(\alpha,K)}$ is finite and continuous and the value $\cE^{(\alpha,K)}(f)$ only depends on $f|_K$. 

 \item If $\cE$ is symmetric, then $\cE^{(\alpha,K)}$ is symmetric. 
 \item If $\cE$ is positively $p$-homogeneous, then $\cE^{(\alpha,K)}_p$ is positively $p$-homogeneous.
\end{enumerate}
\end{proposition}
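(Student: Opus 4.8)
The plan is to handle the three parts separately; part (a) carries essentially all the content, while (b) and (c) reduce to short substitution arguments. Throughout I use that $\cE$ is convex with $\cE(0) = 0$. For part (a), the dependence of $\cE^{(\alpha,K)}(f)$ on $f|_K$ is immediate, since the expression $\cE(g) + \alpha \sum_{x \in K}|f(x) - g(x)|^p$ involves $f$ only through its values on $K$. Finiteness follows by inserting the competitor $g = 0$ into the infimum and using $\cE(0) = 0$, which gives
$$\cE^{(\alpha,K)}(f) \leq \alpha \sum_{x \in K} |f(x)|^p < \infty.$$

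For continuity in (a), the idea is to reduce to a finite-dimensional convex-analytic statement. Since the value depends only on $f|_K$ and $K$ is finite, I would factor $\cE^{(\alpha,K)}$ as $F \circ (\cdot|_K)$ for a function $F \colon \cF(K) \to [0,\infty)$; the restriction map $f \mapsto f|_K$ is $\mathfrak{P}$-continuous into the finite-dimensional space $\cF(K)$, so it suffices to show that $F$ is continuous there. The key observation is that $F$ is a \emph{finite convex} function: finiteness was just established, and convexity follows because the map $(f,g) \mapsto \cE(g) + \alpha \sum_{x \in K}|f(x) - g(x)|^p$ is jointly convex in $(f,g)$ (as $\cE$ is convex and each $t \mapsto |t|^p$ is convex for $p \geq 1$, precomposed with the linear map $(a,b)\mapsto a-b$), and the partial infimum over $g$ of a jointly convex functional is again convex, by the standard $\varepsilon$-competitor argument. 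I would then invoke the classical fact that a finite convex function on a finite-dimensional space is continuous, which yields the continuity of $F$ and hence of $\cE^{(\alpha,K)}$.

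For part (b), assuming $\cE$ symmetric, I would apply the substitution $g \mapsto -g$, which is a bijection of $\cF(X)$, in the infimum defining $\cE^{(\alpha,K)}(-f)$; using $\cE(-g) = \cE(g)$ and $|-f(x)+g(x)|^p = |f(x)-g(x)|^p$ recovers exactly the infimum defining $\cE^{(\alpha,K)}(f)$. For part (c), assuming $\cE$ positively $p$-homogeneous and $\lambda > 0$, I would substitute $g = \lambda h$ (again a bijection of $\cF(X)$) in the infimum defining $\cE^{(\alpha,K)}(\lambda f)$; then $\cE(\lambda h) = \lambda^p \cE(h)$ and $|\lambda f(x) - \lambda h(x)|^p = \lambda^p |f(x)-h(x)|^p$ factor out a common $\lambda^p$, giving $\cE^{(\alpha,K)}(\lambda f) = \lambda^p \cE^{(\alpha,K)}(f)$, while the case $\lambda = 0$ is immediate from finiteness and nonnegativity.

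The only step requiring genuine care is the continuity in (a). Finiteness, symmetry, and homogeneity are one-line substitution computations, whereas continuity rests on two ingredients: the convexity of the partial infimum and the continuity of finite convex functions on finite-dimensional spaces. I expect the convexity of the infimal projection to be the point most worth spelling out, since it is precisely what reduces the problem to the standard finite-dimensional fact.
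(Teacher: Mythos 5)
Your proof is correct and takes essentially the same approach as the paper's: finiteness via the competitor $g = 0$, dependence on $f|_K$ by inspection, and continuity by factoring through the finite-dimensional space $\cF(K)$ and invoking the fact that an everywhere-finite convex function on a finite-dimensional space is continuous, with (b) and (c) handled by the obvious substitutions. The only difference is that you spell out the convexity of the infimal projection (via joint convexity and the $\varepsilon$-competitor argument), which the paper asserts without proof; this is a worthwhile detail but not a different route.
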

\begin{proof}
 (a): The finiteness of $\cE^{(\alpha,K)}(f)$ follows from the definition by letting $g = 0$ and using $\cE(0) = 0$. By definition it is clear that $\cE^{(\alpha,K)}(f)$ only depends on $f|_K$. Together, these observations imply that the functional 
 $$\cE' \colon \cF(K) \to [0,\infty),\quad f \mapsto \cE^{(\alpha,K)}(\iota_K f)$$
 is a well-defined  convex functional on the finite dimensional space $\cF(K)$ with effective domain equal to $\cF(K)$. Since any convex functional defined everywhere on a finite dimensional normed space is continuous, $\cE'$ must be continuous with respect to pointwise convergence in $\cF(K)$. Hence,  $\cE^{(\alpha,K)}$ is continuous with respect to pointwise convergence in $\cF(X)$. 
 
 (b) and (c): These are straightforward. 
\end{proof}

\begin{theorem}\label{theorem:approximating forms}
$\cE$ is lower semicontinuous with respect to pointwise convergence if and only if for some/any $1 \leq p < \infty$ we have
$$\cE = \sup \{\cE_p^{(\alpha,K)} \mid K \subset X \text{ finite  and } \alpha > 0\}. $$
\end{theorem}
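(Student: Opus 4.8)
The plan is to prove both implications, exploiting the fact that the approximating functionals $\cE^{(\alpha,K)}_p$ are continuous (Proposition~\ref{proposition:basic properties approximating forms}) and that they form a monotone family as $\alpha$ and $K$ grow.

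First I would handle the easy direction. Suppose $\cE = \sup\{\cE_p^{(\alpha,K)} \mid K \text{ finite}, \alpha > 0\}$. Each $\cE_p^{(\alpha,K)}$ is continuous on $(\cF(X),\mathfrak{P})$, hence in particular lower semicontinuous. A pointwise supremum of lower semicontinuous functionals is lower semicontinuous (the epigraph of a sup is the intersection of the individual epigraphs, and an intersection of closed sets is closed). Therefore $\cE$ is lower semicontinuous. This argument works for the given $p$, so the ``any $p$ implies lsc'' part is immediate.

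For the converse, assume $\cE$ is lower semicontinuous and fix $1 \leq p < \infty$. I would first record two structural facts. From the definition, $\cE_p^{(\alpha,K)} \leq \cE$ always (take $g = f$ in the infimum), and the family is monotone increasing in both $\alpha$ (larger penalty) and $K$ (more constraints). Hence $\tilde\cE := \sup_{\alpha, K}\cE_p^{(\alpha,K)}$ satisfies $\tilde\cE \leq \cE$ pointwise, and it remains to prove $\cE(f) \leq \tilde\cE(f)$ for every $f$. Fix $f$ with $\tilde\cE(f) < \infty$ (otherwise nothing to show) and set $R = \tilde\cE(f)$. For each finite $K$ and each $\alpha$, the infimum defining $\cE_p^{(\alpha,K)}(f)$ is bounded by $R$, so there exists $g_{\alpha,K} \in D(\cE)$ with
\[
\cE(g_{\alpha,K}) + \alpha \sum_{x \in K} |f(x) - g_{\alpha,K}(x)|^p \leq R + 1.
\]
This forces $\cE(g_{\alpha,K}) \leq R+1$ and $\sum_{x\in K}|f(x)-g_{\alpha,K}(x)|^p \leq (R+1)/\alpha$. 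The plan is to take a net indexed by pairs $(\alpha,K)$ directed by $\alpha \to \infty$ and $K \uparrow X$: the second bound forces $g_{\alpha,K}(x) \to f(x)$ at every fixed $x$ (once $x \in K$ and $\alpha$ is large), so $g_{\alpha,K} \to f$ in the topology $\mathfrak{P}$ of pointwise convergence. The lower semicontinuity of $\cE$ then yields
\[
\cE(f) \leq \liminf \cE(g_{\alpha,K}) \leq R + 1.
\]
Finally I would remove the spurious $+1$ by repeating the estimate with $R + \varepsilon$ in place of $R+1$ for arbitrary $\varepsilon > 0$ (i.e.\ choosing the infimizers to within $\varepsilon$), concluding $\cE(f) \leq \tilde\cE(f)$.

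The main obstacle is purely a matter of \emph{organizing the net correctly} so that pointwise convergence genuinely holds: one must index by the directed set of pairs $(\alpha, K)$ with the product preorder ($\alpha$ increasing, $K$ increasing under inclusion), and verify that for each fixed $x \in X$ the values $g_{\alpha,K}(x)$ converge to $f(x)$ along this net — which they do because for $(\alpha,K) \succeq (\alpha_0, \{x\})$ one has $x \in K$ and $|f(x) - g_{\alpha,K}(x)|^p \leq (R+1)/\alpha \to 0$. Everything else is routine: the continuity of the approximating functionals is already established, and lower semicontinuity against nets is exactly the hypothesis being used. A small subtlety worth flagging is that the statement asserts equivalence with ``for some/any'' $p$; the forward direction (lsc $\Rightarrow$ equality) must be shown for arbitrary fixed $p$, while the reverse direction gives lsc from equality for a single $p$, and together these establish that the property holds for one $p$ iff it holds for all $p$, as claimed.
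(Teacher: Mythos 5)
Your proposal is correct and follows essentially the same route as the paper: the easy direction via a supremum of continuous functionals being lower semicontinuous, and the converse via near-infimizers indexed by the directed set of pairs $(\alpha,K)$, which converge pointwise to $f$ so that lower semicontinuity applies. The only cosmetic difference is that the paper builds the tolerance $1/\alpha$ into the choice of near-infimizers (so it vanishes automatically along the net), whereas you use a fixed tolerance $\varepsilon$ and let $\varepsilon \to 0$ in a second pass; both work equally well.
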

\begin{proof}
If $\cE = \sup \{\cE^{(\alpha,K)} \mid K \subset X \text{ finite  and } \alpha > 0\}$, then $\cE$ is the supremum of continuous functionals and hence lower semicontinuous. 

Conversely, assume that $\cE$ is lower semicontinuous. We let $I = \{(\alpha,K) \mid \alpha > 0 \text{ and } K \subset X \text{ finite}\}$ and order these pairs by $(\alpha,K) \prec (\beta,L)$ if  $\alpha \leq \beta$ and $K \subset L$. It is straightforward that $\cE^{(\alpha,K)} \leq \cE$ for all $(\alpha,K) \in I$ and we only have to prove that the supremum is at least   $\cE$. 

Case 1: $\sup_{(\alpha,K)\in I} \cE^{(\alpha,K)} (f)  = \infty$. In this case, we obtain $\cE(f) =  \infty$ and equality is proven. 

Case 2: $C := \sup_{(\alpha,K)\in I} \cE^{(\alpha,K)} (f) < \infty$. For each $(\alpha,K) \in I$ we choose $f_{(\alpha,K)} \in \cF(X)$ with 
$$\cE(f_{(\alpha,K)}) + \alpha \sum_{x \in K} |f(x) - f_{(\alpha,K)}(x)|^p \leq \cE^{(\alpha,K)}(f) + \frac{1}{\alpha}.$$
For $\alpha \geq 1$ and $K \subset X$ with $\{x\} \subset K$, this implies 
$$  |f(x) - f_{(\alpha,K)}(x)|^p \leq \frac{C + 1}{\alpha}.$$
In particular, the net $(f_{(\alpha,K)})$ converges pointwise to $f$ and the lower semicontinuity of $\cE$ implies
$$\cE(f) \leq \liminf_{(\alpha,K) \in I} \cE(f_{(\alpha,K)}) \leq  \liminf_{(\alpha,K) \in I} \left(\cE^{(\alpha,K)}(f) + \frac{1}{\alpha}\right) \leq C. $$
\end{proof}

\begin{corollary}[Approximation via finite sets] \label{corollary:approximation via finite subsets}
 $\cE$ is lower semicontinuous if and only if there exist $I \neq \emptyset$ and finite sets $K_i$, continuous convex functionals $\cE_i \colon \cF(K_i) \to [0,\infty)$ with $\cE_i(0) = 0$, and  injective $\Phi_i \colon K_i \to X$, $i \in I$, such that 
 $$\cE(f) = \sup_i \cE_i (f  \circ \Phi_i), \quad f \in \cF(X). $$
 %
 %
\end{corollary}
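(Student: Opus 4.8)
The plan is to deduce this corollary directly from Theorem~\ref{theorem:approximating forms}, which characterizes lower semicontinuity of $\cE$ as the identity $\cE = \sup\{\cE_p^{(\alpha,K)} \mid K \subset X \text{ finite}, \alpha > 0\}$ for some/any fixed $1 \leq p < \infty$. The statement to be proven is of the same shape — a pointwise supremum of functionals, each factoring through the restriction to a finite set — so the two implications should follow by translating between the two descriptions.

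For the forward implication, I would assume $\cE$ is lower semicontinuous and fix some $p$, say $p = 1$. By Theorem~\ref{theorem:approximating forms}, $\cE = \sup_{(\alpha,K)} \cE^{(\alpha,K)}$. I would take $I$ to be the index set of pairs $(\alpha,K)$ with $\alpha > 0$ and $K \subset X$ finite. For each such index, Proposition~\ref{proposition:basic properties approximating forms}(a) tells us that $\cE^{(\alpha,K)}(f)$ depends only on $f|_K$, is finite and continuous, and vanishes at $0$; hence the induced functional on $\cF(K)$ given by $g \mapsto \cE^{(\alpha,K)}(\iota_K g)$ is a continuous convex functional $\cE_i \colon \cF(K) \to [0,\infty)$ with $\cE_i(0) = 0$. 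Taking $\Phi_i \colon K \hookrightarrow X$ to be the inclusion (which is injective), one has $f \circ \Phi_i = f|_K$ and therefore $\cE_i(f \circ \Phi_i) = \cE^{(\alpha,K)}(f)$, so the required representation holds.

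For the converse, I would assume a representation $\cE(f) = \sup_i \cE_i(f \circ \Phi_i)$ with the stated properties. Each map $f \mapsto \cE_i(f \circ \Phi_i)$ is a composition of the continuous linear map $\cF(X) \to \cF(K_i)$, $f \mapsto f \circ \Phi_i$ (continuous for pointwise convergence, since each coordinate $f(\Phi_i(x))$ is a continuous evaluation), with the continuous convex functional $\cE_i$; hence it is continuous and convex on $\cF(X)$. A pointwise supremum of continuous convex functionals is lower semicontinuous and convex, so $\cE$ is lower semicontinuous. The hypotheses $\cE_i(0) = 0$ guarantee $\cE(0) = 0$, consistent with our standing assumptions on $\cE$.

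The only subtlety worth flagging is bookkeeping rather than genuine difficulty: in the forward direction one must confirm that restricting $\cE^{(\alpha,K)}$ to a genuine functional on $\cF(K_i)$ with codomain $[0,\infty)$ is legitimate, which is exactly the finiteness and factorization content of Proposition~\ref{proposition:basic properties approximating forms}(a). I expect no real obstacle: the corollary is essentially a repackaging of Theorem~\ref{theorem:approximating forms} in terms of abstract finite sets $K_i$ and injections $\Phi_i$ rather than finite subsets of $X$, and the extra generality on the converse side is harmless because the continuity of the pullback $f \mapsto f \circ \Phi_i$ holds regardless of whether $\Phi_i$ is an inclusion.
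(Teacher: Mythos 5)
Your proof is correct and follows essentially the same route as the paper: the forward direction instantiates Theorem~\ref{theorem:approximating forms} together with Proposition~\ref{proposition:basic properties approximating forms}(a) (with $K_i \subset X$ and $\Phi_i$ the inclusions), and the converse observes that each $f \mapsto \cE_i(f \circ \Phi_i)$ is continuous on $(\cF(X),\mathfrak{P})$, so $\cE$ is lower semicontinuous as a supremum of continuous functionals. The only cosmetic point is that $\cE^{(\alpha,K)}(0)=0$ is not literally part of Proposition~\ref{proposition:basic properties approximating forms}(a), but it is immediate from the definition by taking $g=0$.
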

\begin{proof}
The 'only if'-part follows directly from the previous theorem. For the 'if'-part we note that the functional $\cE_i' \colon \cF(X) \to [0,\infty)$, $\cE_i'(f) = \cE_i(f \circ \Phi_i)$ is continuous. Hence, $\cE$ is lower semicontinuous as supremum of continuous functionals. 
\end{proof}

\section{Resistance forms and the resistance metric} \label{section:nonlinear resistance forms}

For symmetric functionals the elementary resistance is an extended pseudo metric but it does not satisfy the main feature of resistance, namely the additivity over series circuits. For this reason, we introduce another family of extended pseudometrics with the additivity feature. In order to  do so, we need to additionally assume the compatibility of $\cE$ with normal contractions,  leading to the notion of nonlinear resistance forms.  

As in the previous section, $\cE \colon \cF(X) \to [0,\infty]$ is assumed to be convex with $\cE(0) = 0$.

\subsection{The resistance} \label{subsection:resistance}

\begin{definition}[Resistance] \label{definition:resistance}
For $t > 0$ we define the {\em $t$-resistance} of $\cE$ by
$$\cR_t \colon X \times X \to [0,\infty],\quad \cR_t(x,y) = \sup \{ t(f(x) - f(y)) - \cE(f) \mid f \in \cF(X)\}.$$
Moreover, for $x \in X$ we define the $t$-resistance between $x \in X$ and $\infty$ by 
$$\cR_{t,\infty}(x) = \sup \{ t f(x) - \cE(f) \mid f \in \cF(X)\}.$$
\end{definition}

\begin{remark}
 As for the elementary resistance, the $t$-resistance between points and infinity can be interpreted as the $t$-resistance for a modified functional on an enlarged space, cf. Remark~\ref{remark:resistance to boundary point}.
\end{remark}

%
%
%
%

\begin{proposition}[Elementary properties of resistance] \label{prop:elementary properties resistance}
Let $x,y \in X$. 
\begin{enumerate}[(a)]
 \item The maps $(0,\infty) \to [0,\infty]$, $t \mapsto \cR_t(x,y)$ and $(0,\infty) \to [0,\infty]$, $t \mapsto \cR_{t,\infty}(x)$  are convex and lower semicontinuous.
 \item If $\cE$ is symmetric, then for each $t > 0$ we have 
 $$\cR_t(x,y) = \sup \{ t |f(x) - f(y)| - \cE(f) \mid f \in \cF(X)\}$$
 and 
 $$ \cR_{t,\infty}(x) = \sup \{ t |f(x)|  - \cE(f) \mid f \in \cF(X)\}.$$
 %
 
 %
 
 \item $\cR_t(x,y) = \cE^*(t (\delta_{x} - \delta_{y}))$ and $\cR_{t,\infty}(x) = \cE^*(t \delta_x)$.
 
 \item $t R(x,y) \leq 1 + \cR_t(x,y)$ and $t R_{\infty}(x) \leq 1 + \cR_{t,\infty}(x)$.
 %
\end{enumerate}
\end{proposition}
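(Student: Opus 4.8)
The plan is to treat the four assertions in turn, each following directly from the definition of $\cR_t$ as a supremum together with elementary convex analysis; I expect no substantial obstacle, only some bookkeeping to carry the two-point version and the point-at-infinity version in parallel.

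For part (a) I would observe that for each fixed $f \in \cF(X)$ with $\cE(f) < \infty$ the map $(0,\infty) \to \R$, $t \mapsto t(f(x) - f(y)) - \cE(f)$, is affine, hence convex and continuous (functions with $\cE(f) = \infty$ contribute nothing to the supremum). Since $\cR_t(x,y)$ is the pointwise supremum over $f$ of this family of affine functions, it is convex and lower semicontinuous as a supremum of continuous convex functions. The same argument applies verbatim to $t \mapsto \cR_{t,\infty}(x)$.

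For part (b), under the symmetry assumption $\cE(f) = \cE(-f)$, I would prove both inequalities. The inequality ``$\leq$'' is immediate from $f(x) - f(y) \leq |f(x) - f(y)|$. For ``$\geq$'', given $f \in \cF(X)$ I distinguish according to the sign of $f(x) - f(y)$: if it is nonnegative, the term $t|f(x)-f(y)| - \cE(f)$ already appears in the supremum defining $\cR_t(x,y)$; if it is negative, I replace $f$ by $-f$, using $(-f)(x) - (-f)(y) = |f(x) - f(y)|$ and $\cE(-f) = \cE(f)$, so the same value is attained. The identity for $\cR_{t,\infty}$ follows in exactly the same way. For part (c), I would unwind the definition of the convex conjugate $\cE^*$: under the identification of the dual of $\cF(X)$ with $\cF_c(X)$ via the pairing $(\varphi, f) = \sum_{x \in X} \varphi(x) f(x)$, one has $(t(\delta_x - \delta_y), f) = t(f(x) - f(y))$ and $(t\delta_x, f) = t f(x)$, so that $\cE^*(t(\delta_x - \delta_y)) = \cR_t(x,y)$ and $\cE^*(t\delta_x) = \cR_{t,\infty}(x)$ are just rewritings of the defining suprema.

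For part (d) I would test the supremum defining $\cR_t$ against the functions admissible for the elementary resistance. For every $f$ with $\cE(f) \leq 1$, the definition of $\cR_t(x,y)$ gives $t(f(x) - f(y)) \leq \cR_t(x,y) + \cE(f) \leq \cR_t(x,y) + 1$; taking the supremum over all such $f$ and recalling that $R(x,y) = \sup\{f(x) - f(y) \mid \cE(f) \leq 1\}$ yields $tR(x,y) \leq 1 + \cR_t(x,y)$. The estimate for $R_\infty$ is obtained identically. The proposition is genuinely elementary, so rather than a hard step the only thing demanding attention is keeping the finite and the infinite versions synchronized and, in (c), invoking the correct identification of the dual pairing on $\cF(X)$ established in the preceding section.
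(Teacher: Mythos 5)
Your proposal is correct and follows essentially the same route as the paper: part (a) as a supremum of affine (hence continuous convex) functions of $t$, part (b) by the sign-flip argument using symmetry, part (c) by unwinding the dual pairing in the definition of $\cE^*$, and part (d) by restricting the supremum to $\{f \mid \cE(f) \leq 1\}$ and rearranging. The paper merely compresses (b) and (c) to ``trivial'' and ``follows directly from the definition''; your write-up supplies the same details explicitly.
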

\begin{proof}
(a): The supremum of affine linear functions is convex and lower semicontinuous. 

(b): This is trivial. 

(c): This follows directly from the definition of $\cE^*$.

(d): We compute using the definitions
\begin{align*}
    \cR_t(x,y) &=    \sup \{t (f(x) - f(y)) - \cE(f) \mid f \in \cF(X)\}\\
    &\geq \sup \{t (f(x) - f(y)) - \cE(f) \mid \cE(f) \leq 1\}\\
    &\geq  t\sup \{ f(x) - f(y) \mid \cE(f) \leq 1\} - 1\\
    &= t R(x,y) - 1.
    \end{align*}
The inequality for $R_\infty$ can be inferred similarly. 
\end{proof}

\begin{corollary}[Finiteness of the resistance]\label{coro:finiteness of resistance}
 Assume that $\cE$ is lower semicontinuous and let  $x,y \in X$. The following assertions are equivalent: 
 \begin{enumerate}[(i)]
  \item $\delta_{x} - \delta_y \in M(\cE^*)$. 
  \item $R(x,y)  < \infty$. 
  \item   $\cR_t(x,y) < \infty$ for some $t > 0$. 
 \end{enumerate}
%
 \end{corollary}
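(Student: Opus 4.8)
The plan is to assemble the statement from results already at hand, organising it as the two equivalences (i) $\Leftrightarrow$ (ii) and (ii) $\Leftrightarrow$ (iii); no genuinely new argument is needed, and the work lies mainly in lining up the right dualities. Throughout I will use that $\cE^*$ is, as a convex conjugate, automatically lower semicontinuous on $(\cF_c(X),\sigma(\cF_c(X),\cF(X)))$ and satisfies $\cE^*(0)=0$, so that the results of Subsection~\ref{subsection:convex functionals} apply verbatim to $\cE^*$ in place of $\vr$.

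For (i) $\Leftrightarrow$ (ii) I would first invoke Proposition~\ref{proposition: some basic inequalities}~(c), whose hypothesis (lower semicontinuity of $\cE$) is exactly our standing assumption, to rewrite the elementary resistance as an Orlicz functional of the conjugate,
$$R(x,y) = \av{\delta_x - \delta_y}_{O,\cE^*}.$$
Then I would apply Proposition~\ref{proposition:fundamental inequalities}~(c) to the functional $\cE^*$ (which is lower semicontinuous, as noted above): for $\varphi \in \cF_c(X)$ one has $\av{\varphi}_{O,\cE^*} < \infty$ if and only if $\varphi \in M(\cE^*)$. Taking $\varphi = \delta_x - \delta_y$ and combining with the displayed identity yields $R(x,y) < \infty$ if and only if $\delta_x - \delta_y \in M(\cE^*)$, which is precisely (i) $\Leftrightarrow$ (ii).

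For (iii) $\Rightarrow$ (ii) I would simply read off Proposition~\ref{prop:elementary properties resistance}~(d): from $t R(x,y) \leq 1 + \cR_t(x,y)$ together with $\cR_t(x,y) < \infty$ for some $t > 0$ we obtain $R(x,y) < \infty$. For the converse (ii) $\Rightarrow$ (iii) I would use the equivalence just established to pass to $\delta_x - \delta_y \in M(\cE^*)$ and then exploit the explicit description of the modular cone from Subsection~\ref{subsection:convex functionals}, namely $M(\cE^*) = \{\lambda\varphi \mid \lambda \geq 0,\ \varphi \in D(\cE^*)\}$. Writing $\delta_x - \delta_y = \lambda\varphi$ with $\lambda \geq 0$ and $\cE^*(\varphi) < \infty$, the case $\lambda = 0$ forces $\delta_x = \delta_y$ and $\cR_t(x,y) = \cE^*(0) = 0$ for every $t$, while for $\lambda > 0$ I would choose $t = 1/\lambda$, so that by Proposition~\ref{prop:elementary properties resistance}~(c),
$$\cR_t(x,y) = \cE^*\big(t(\delta_x - \delta_y)\big) = \cE^*(\varphi) < \infty,$$
which establishes (iii).

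The argument is essentially bookkeeping, and I do not anticipate a serious obstacle. The only point requiring care is to apply the fundamental inequalities (Proposition~\ref{proposition:fundamental inequalities}~(c)) and the modular-cone description to the conjugate $\cE^*$ rather than to $\cE$ itself, and to confirm that $\cE^*$ automatically meets the hypotheses (convexity, vanishing at $0$, lower semicontinuity) so that these tools are available without any extra assumption on $\cE$ beyond its lower semicontinuity.
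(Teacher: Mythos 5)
Your proposal is correct and follows essentially the same route as the paper: both hinge on the identity $R(x,y) = \av{\delta_x - \delta_y}_{O,\cE^*}$ from Proposition~\ref{proposition: some basic inequalities}, the fundamental inequalities (Proposition~\ref{proposition:fundamental inequalities}) applied to the automatically lower semicontinuous conjugate $\cE^*$, the identity $\cR_t(x,y) = \cE^*(t(\delta_x - \delta_y))$ from Proposition~\ref{prop:elementary properties resistance}, and the cone description $M(\cE^*) = \{\lambda\varphi \mid \lambda \geq 0,\ \varphi \in D(\cE^*)\}$. The only cosmetic difference is that the paper links (i) and (iii) directly, whereas you route (iii) $\Rightarrow$ (ii) through the inequality $tR(x,y) \leq 1 + \cR_t(x,y)$; both are valid and equally short.
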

\begin{proof}
 (i) $\Leftrightarrow$ (ii): Proposition~\ref{proposition: some basic inequalities} shows 
 $$R(x,y) = \av{\delta_{x} - \delta_y}_{O,\cE^*}.$$
 Moreover, by Proposition~\ref{proposition:fundamental inequalities} we have $\av{\delta_{x} - \delta_y}_{O,\cE^*} < \infty$ if and only if $\delta_{x} - \delta_y \in M(\cE^*)$. 
 
 (i) $\Leftrightarrow$ (iii): By definition we have $\delta_{x} - \delta_y \in M(\cE^*)$ if and only if $t (\delta_{x} - \delta_y) \in D(\cE^*)$ for some $t > 0$. Moreover, the previous proposition shows $\cE^*(t (\delta_{x} - \delta_y)) = \cR_t(x,y)$ and so the claim follows.
%
%
%
%
 %
\end{proof}

With exactly the same arguments as for the previous corollary, we obtain the following. 

\begin{corollary}[Finiteness of the resistance to infinity]\label{coro:finiteness of resistanceII}
 Assume that $\cE$ is lower semicontinuous and let  $x \in X$. The following assertions are equivalent: 
 \begin{enumerate}[(i)]
  \item $\delta_{x} \in M(\cE^*)$. 
  \item $R_\infty(x)  < \infty$. 
  \item  $\cR_{t,\infty}(x) < \infty$ for some $t > 0$. 
 \end{enumerate}
%
 \end{corollary}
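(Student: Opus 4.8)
The plan is to reproduce the structure of the proof of Corollary~\ref{coro:finiteness of resistance} verbatim, replacing $\delta_x - \delta_y$ by $\delta_x$ and the $t$-resistance $\cR_t(x,y)$ by the resistance to infinity $\cR_{t,\infty}(x)$ throughout. As there, I would split the argument into the two equivalences (i) $\Leftrightarrow$ (ii) and (i) $\Leftrightarrow$ (iii), each obtained by specializing results already proved for general convex functionals to the linear functional $\delta_x \in \cF_c(X) = \cF(X)'$.

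For (i) $\Leftrightarrow$ (ii), I would first invoke Proposition~\ref{proposition: some basic inequalities}~(c), which (using the lower semicontinuity of $\cE$, hence $\cE^{**} = \cE$) gives the identity $R_\infty(x) = \av{\delta_x}_{O,\cE^*}$, expressing the elementary resistance to infinity as the Orlicz functional of $\cE^*$ evaluated at $\delta_x$. Then I would apply Proposition~\ref{proposition:fundamental inequalities}~(c) to the functional $\cE^*$, which is convex, nonnegative, vanishes at $0$, and is lower semicontinuous on $(\cF(X)',\sigma(\cF(X)',\cF(X)))$ as a convex conjugate; its conclusion that $\av{\psi}_{O,\cE^*} < \infty$ if and only if $\psi \in M(\cE^*)$, applied at $\psi = \delta_x$, yields exactly $R_\infty(x) < \infty \Leftrightarrow \delta_x \in M(\cE^*)$.

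For (i) $\Leftrightarrow$ (iii), I would use the elementary description $M(\cE^*) = \{\lambda \psi \mid \lambda \geq 0,\ \psi \in D(\cE^*)\}$ recorded in Subsection~\ref{subsection:convex functionals}. Since $\delta_x \neq 0$, membership $\delta_x \in M(\cE^*)$ is equivalent to $t\delta_x \in D(\cE^*)$ for some $t > 0$, that is, $\cE^*(t\delta_x) < \infty$. By Proposition~\ref{prop:elementary properties resistance}~(c) we have $\cE^*(t\delta_x) = \cR_{t,\infty}(x)$, so this is precisely assertion (iii).

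I do not anticipate a genuine obstacle here: the content is entirely contained in the cited propositions, and the only care needed is bookkeeping. One should make sure the roles of $V$ and $V'$ are swapped correctly when applying the fundamental inequalities to $\cE^*$, so that $(\cE^*)^* = \cE$ is the functional whose sublevel set $\{\cE \leq 1\}$ defines the relevant Orlicz functional; and one should note that $\delta_x \neq 0$, so that the scaling argument in (i) $\Leftrightarrow$ (iii) indeed produces a strictly positive $t$ rather than the trivial $\lambda = 0$.
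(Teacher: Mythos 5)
Your proposal is correct and takes essentially the same route as the paper, whose entire proof of this corollary is the remark that it follows ``with exactly the same arguments as for the previous corollary'' --- i.e., the proof of Corollary~\ref{coro:finiteness of resistance} with $\delta_x - \delta_y$ replaced by $\delta_x$, $R$ by $R_\infty$, and $\cR_t$ by $\cR_{t,\infty}$, which is precisely what you do. Your bookkeeping remarks (weak-$*$ lower semicontinuity of $\cE^*$ so that Proposition~\ref{proposition:fundamental inequalities}~(c) applies, the identification $(\cE^*)^* = \cE$ behind the Orlicz functional of $\cE^*$, and $\delta_x \neq 0$ in the scaling step) are exactly the points used implicitly there.
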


  The elementary resistance is the Orlicz functional of $\cE^*$ applied to $\delta_x - \delta_y$, see Proposition~\ref{prop:elementary properties resistance}, whereas the $t$-resistance is the convex conjugate  $\cE^*$ applied to $t(\delta_x - \delta_y)$. In general it is hard to compute either of them directly.  For homogeneous functionals however, we know that both are related with the Luxemburg seminorm of $\cE^*$ through a direct formula. This leads to the following result.
 
%

\begin{theorem}[Resistance for homogeneous functionals] \label{theorem:homogeneous resistance}
 Assume that $\cE$ is lower semicontinuous and positively $p$-homogeneous for some $1 \leq p < \infty$. Moreover, let $p^{-1} + q^{-1}  = 1$. If $p > 1$, then 
 $$\cR_t(x,y) = (p-1) \left(\frac t p\right)^q R(x,y)^q  $$
 and 
 $$\cR_{t,\infty}(x) = (p-1) \left(\frac t p\right)^q R_\infty(x)^q.$$
 If $p = 1$, then 
 $$\cR_t(x,y) = \begin{cases}
                 0 &\text{if } R(x,y) \leq \frac{1}{t}\\
                 \infty &\text{if } R(x,y) > \frac{1}{t}
                \end{cases} \text{ and } \cR_{t,\infty}(x) = \begin{cases}
                 0 &\text{if } R_\infty(x) \leq \frac{1}{t}\\
                 \infty &\text{if } R_\infty(x) > \frac{1}{t}
                \end{cases}.$$

\end{theorem}
\begin{proof}
It suffices to treat $\cR_t$, the statements on $\cR_{t,\infty}$ follow with similar arguments. First assume that $p > 1$. As seen in Proposition~\ref{prop:seminorms for homogenous functionals}, the convex conjugate $\cE^*$ is positively $q$-homogeneous. Hence, $M(\cE^*) = D(\cE^*)$ and Proposition~\ref{prop:elementary properties resistance}~(c) together with Corollary~\ref{coro:finiteness of resistance} show  $\cR_t(x,y) < \infty$ if and only if $R(x,y) < \infty$. Therefore, we can assume $\delta_{x} - \delta_y \in M(\cE^*)$. With the help of   Proposition~\ref{prop:elementary properties resistance}  and  Proposition~\ref{prop:seminorms for homogenous functionals} applied to $\cE^*$ we obtain 
\begin{align*}
 \cR_t(x,y) &= \cE^*(t(\delta_x - \delta_y)) = t^q \cE^*(\delta_x - \delta_y) = t^q \av{\delta_x - \delta_y}^q_{L,\cE^*} \\
 &= t^q \frac{p-1}{p^q} \av{\delta_x - \delta_y}^q_{O,\cE^*} = (p-1) \left(\frac t p\right)^q R(x,y)^q.
\end{align*}

Now let $p = 1$. Proposition~\ref{prop:seminorms for homogenous functionals} shows $\cE^*(t(\delta_x - \delta_y)) = 0$ if $t(\delta_x - \delta_y) \leq \av{\cdot}_L$ (as functionals on $\cF(X)$) and $\cE^*(t(\delta_x - \delta_y)) = \infty$ else. But by the $1$-homogeneity the inequality $t(\delta_x - \delta_y) \leq \av{\cdot}_L$  is equivalent to 
$$t (f(x) - f(y)) \leq \av{f}_L = \cE(f)$$
for all $f \in D(\cE)$, which in turn is equivalent to $R(x,y) \leq t^{-1}$.
\end{proof}

 \begin{remark}\label{remark:comparing resistances}
  The resistance forms of Kigami \cite{Kig03} are $2$-homogenous. The resistance he considers equals $R^2$ in our notation. Hence, our theorem  applied to $p = 2$ shows that $\mathcal R_2 = R^2$ equals Kigami's resistance. For $1 <  p < \infty$, the $p$-resistance forms of Kajino and Shimizu in \cite{KS25} are $p$-homogeneous functionals with finite elementary resistance (see Section~\ref{subsection:p-resistance forms} for a precise definition). The resistance they consider equals $R^p$ in our notation. Our theorem relates these quantities through the identity
  $$\mathcal R_p = (p-1) R^q = (p-1) (R^p)^\frac{1}{p-1}.$$
 \end{remark}

 \subsection{Nonlinear resistance forms and compatibility with normal contractions}

 While the elementary resistance trivially satisfies the triangle inequality,   the $t$-resistance only satisfies it if $\cE$ is compatible with certain normal contractions. This is discussed next. 
 
 Recall that a {\em normal contraction}  is a $1$-Lipschitz function $C \colon \R \to \R$ with $C(0) = 0$. We say that the normal contraction $C$ {\em operates on $\cE$} or that {\em $\cE$ is compatible with $C$} if 
 $$\cE(f + C g) + \cE(f - Cg) \leq \cE(f + g) + \cE(f - g)$$
 for all $f,g \in  \cF(X)$. 
 It is straightforward that $C$ operates on $\cE$ if and only if $-C$ operates on $\cE$. In particular, ${\rm id}$ and $-{\rm id}$ operate on $\cE$.

 In the following lemma we use the approximating functionals discussed in Subsection~\ref{subsection:finite approximation} with some fixed $p \geq 1$.

\begin{lemma}\label{lemma:reduction to continuous case}
 Assume that $C$ is lower semicontinuous and let $C \colon \R \to \R$ be a normal contraction. The following assertions are equivalent. 
 \begin{enumerate}[(i)]
  \item $C$ operates on $\cE$. 
  \item For all $\alpha > 0$ and finite $K \subset X$ the normal contraction $C$ operates on $\cE^{(\alpha,K)}$. 
 \end{enumerate}
\end{lemma}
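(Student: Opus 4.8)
The plan is to prove the two implications separately, noting that the forward direction (i)~$\Rightarrow$~(ii) is an entirely constructive comparison needing no semicontinuity, whereas (ii)~$\Rightarrow$~(i) will be extracted from the representation $\cE = \sup_{(\alpha,K)} \cE^{(\alpha,K)}$ of Theorem~\ref{theorem:approximating forms}; the lower semicontinuity assumption enters only in this latter direction, through that theorem.

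For (i)~$\Rightarrow$~(ii), I would fix $\alpha > 0$, a finite $K \subset X$ and $f,g \in \cF(X)$. Since $\cE^{(\alpha,K)}(f+g)$ and $\cE^{(\alpha,K)}(f-g)$ are finite by Proposition~\ref{proposition:basic properties approximating forms}~(a), for $\varepsilon > 0$ I pick near-optimal competitors $u,v \in \cF(X)$ with
$$\cE(u) + \alpha \sum_{x \in K}|f(x)+g(x) - u(x)|^p \le \cE^{(\alpha,K)}(f+g) + \varepsilon$$
and the analogous bound for $v$ and $f-g$. The key move is to pass to the symmetric/antisymmetric coordinates $\tilde f = \tfrac12(u+v)$ and $\tilde g = \tfrac12(u-v)$, so that $u = \tilde f + \tilde g$ and $v = \tilde f - \tilde g$, and then to use $\tilde f + C\tilde g$ and $\tilde f - C\tilde g$ as competitors for $\cE^{(\alpha,K)}(f+Cg)$ and $\cE^{(\alpha,K)}(f-Cg)$. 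The energy contributions are then controlled directly by the hypothesis that $C$ operates on $\cE$, namely $\cE(\tilde f + C\tilde g) + \cE(\tilde f - C\tilde g) \le \cE(\tilde f + \tilde g) + \cE(\tilde f - \tilde g) = \cE(u) + \cE(v)$.

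The remaining, and genuinely load-bearing, step is the comparison of the penalty terms, which is where the contraction property must enter. Setting $A(x) = f(x) - \tilde f(x)$, $B(x) = g(x) - \tilde g(x)$ and $B'(x) = C(g(x)) - C(\tilde g(x))$, the competitors were chosen exactly so that the penalties become $\sum_{x\in K}(|A+B'|^p + |A-B'|^p)$ on the left and $\sum_{x\in K}(|A+B|^p + |A-B|^p)$ on the right. Since $C$ is a normal contraction, $|B'(x)| \le |B(x)|$ pointwise, so each summand obeys $|A+B'|^p + |A-B'|^p \le |A+B|^p + |A-B|^p$; this is precisely Lemma~\ref{lemma:convexity estimate} applied to the convex function $t \mapsto |t|^p$ with $\lambda = B'(x)/B(x)$ (and trivially when $B(x)=0$). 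Adding the energy and penalty estimates, using $\tilde f \pm C\tilde g$ to bound $\cE^{(\alpha,K)}(f \pm Cg)$ from above, and letting $\varepsilon \to 0$, then yields that $C$ operates on $\cE^{(\alpha,K)}$.

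For (ii)~$\Rightarrow$~(i), I would invoke Theorem~\ref{theorem:approximating forms} to write $\cE = \sup_{(\alpha,K)\in I}\cE^{(\alpha,K)}$, where $I$ is directed by $(\alpha,K)\prec(\beta,L)$ iff $\alpha\le\beta$ and $K\subset L$, and observe that $(\alpha,K)\mapsto \cE^{(\alpha,K)}(h)$ is monotone nondecreasing along $I$ for each fixed $h$ (larger $\alpha$ and larger $K$ both enlarge the infimand). Combining (ii) with $\cE^{(\alpha,K)}\le\cE$ gives, for every index,
$$\cE^{(\alpha,K)}(f+Cg) + \cE^{(\alpha,K)}(f-Cg) \le \cE^{(\alpha,K)}(f+g) + \cE^{(\alpha,K)}(f-g) \le \cE(f+g) + \cE(f-g).$$
Because the two nets $\cE^{(\alpha,K)}(f+Cg)$ and $\cE^{(\alpha,K)}(f-Cg)$ are nondecreasing over the same directed set, the supremum of their sum equals the sum of their suprema, namely $\cE(f+Cg) + \cE(f-Cg)$; taking the supremum over $I$ on the left therefore yields the desired inequality. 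I expect the pointwise penalty comparison via Lemma~\ref{lemma:convexity estimate} to be the conceptual heart of the argument, with the only real bookkeeping being the $\varepsilon$-optimal competitors and the monotone-net interchange of supremum and sum.
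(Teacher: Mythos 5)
Your proof is correct and follows essentially the same route as the paper's: the (i)~$\Rightarrow$~(ii) direction is the paper's argument with competitors parameterized as $u = \tilde f + \tilde g$, $v = \tilde f - \tilde g$ (the paper writes them as $h \pm h'$ and takes an infimum rather than $\varepsilon$-optimal choices), with the same use of Lemma~\ref{lemma:convexity estimate} for $t \mapsto |t|^p$ and the contraction identity. The (ii)~$\Rightarrow$~(i) direction via $\cE = \sup_{(\alpha,K)}\cE^{(\alpha,K)}$ and monotonicity of the net is exactly what the paper does, with your monotone interchange of supremum and sum merely spelling out the detail the paper leaves implicit.
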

\begin{proof}
(ii) $\Rightarrow$ (i): This follows directly form $\cE = \sup_{(\alpha,K)}  \cE^{(\alpha,K)}$ and the fact that $(\alpha,K) \to \cE^{(\alpha,K)}$ is increasing (with respect to the obvious order, which was also discussed in the proof of Theorem~\ref{theorem:approximating forms}).

 (i) $\Rightarrow$ (ii): If $C \colon \R \to \R$ is a normal contraction, then for $g,h' \in \cF(X)$ and $x \in X$ we have $Cg(x) - Ch'(x) = \lambda_x (g(x) - h'(x))$ for some $|\lambda_x| \leq 1$.   Using this identity, Lemma~\ref{lemma:convexity estimate} applied to $t \mapsto |t|^p$ and that $\cE$ is compatible with $C$, for $f,h \in \cF(X)$ we obtain 
 \begin{align*}
 &\cE(h + h') + \alpha \sum_{x \in K} |f(x) + g(x)  - h(x) - h'(x)|^p\\
 &\quad + \cE(h - h') + \alpha \sum_{x \in K} |f(x) - g(x)  - h(x) + h'(x)|^p\\
  &\geq \cE(h + Ch') + \alpha \sum_{x \in K} |f(x) + Cg(x)  - h(x) - Ch'(x)|^p\\
  &\quad + \cE(h - Ch') + \alpha \sum_{x \in K} |f(x) - Cg(x)  - h(x) + Ch'(x)|^p\\
  &\geq \cE^{(\alpha,K)}(f + Cg) + \cE^{(\alpha,K)}(f- Cg).
 \end{align*}
 Since $h,h'$ were arbitrary, taking the infimum over them yields
 $$\cE^{(\alpha,K)}(f + g) + \cE^{(\alpha,K)}(f- g) \geq \cE^{(\alpha,K)}(f + Cg) + \cE^{(\alpha,K)}(f- Cg),$$
 i.e., $C$ operates on $\cE^{(\alpha,K)}$.
\end{proof}

The next theorem characterizes when all normal contractions operate on $\cE$. It can be seen as an analogue to the second Beurling-Deny criterion for Dirichlet forms. 
  
\begin{theorem}\label{proposition:reduction contraction}
Assume that $\cE$ is lower semicontinuous. The following assertions are equivalent: 
\begin{enumerate}[(i)]
 \item All normal contractions operate on $\cE$. 
 \item The family of normal contractions 
 $$C_\alpha \colon \R \to \R, \quad x \mapsto x \wedge \alpha, \quad \alpha > 0,$$
 operates on $\cE$. 
 \item The family of normal contractions 
$$D_\beta \colon\R \to \R, \quad x \mapsto |x -\beta| - |\beta|, \quad \beta \in \R,$$
 operates on $\cE$. 
\end{enumerate}
If for some $1 \leq p < \infty$ the functional $\cE$ is positively $p$-homogeneous, then these are equivalent to 
\begin{enumerate}[(i)]  \setcounter{enumi}{3}
 \item $C_1 \colon \R \to \R$, $x \mapsto x \wedge 1$, operates on $\cE$. 
 \item $D_1 \colon\R \to \R$, $x \mapsto |x -1| - 1$ operates on $\cE$. 
\end{enumerate}
\end{theorem}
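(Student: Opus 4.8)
The plan is to establish the chain of implications (i) $\Rightarrow$ (ii) $\Rightarrow$ (i) and (i) $\Rightarrow$ (iii) $\Rightarrow$ (i) among the first three assertions, and then, under the additional homogeneity hypothesis, to close the loops (ii) $\Leftrightarrow$ (iv) and (iii) $\Leftrightarrow$ (v). The implications (i) $\Rightarrow$ (ii) and (i) $\Rightarrow$ (iii) are trivial, since each $C_\alpha$ and each $D_\beta$ is itself a normal contraction (one checks $C_\alpha(0) = 0$, $D_\beta(0) = 0$, and that both are $1$-Lipschitz). The substance of the theorem lies in the reverse directions: showing that a small distinguished subfamily of normal contractions operating on $\cE$ forces \emph{all} normal contractions to operate on $\cE$.

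For the hard direction (ii) $\Rightarrow$ (i), I would argue that the set of normal contractions operating on $\cE$ is closed under natural operations and that the family $\{C_\alpha\}$ together with these operations generates all normal contractions. Concretely, I expect the key closure properties to be: the collection of operating normal contractions is closed under composition, under pointwise (monotone or dominated) limits — here invoking the lower semicontinuity of $\cE$ to pass the defining inequality $\cE(f + Cg) + \cE(f - Cg) \le \cE(f+g) + \cE(f-g)$ through limits — and under the reflection $C \mapsto -C$ (noted already in the text) and translations of the type appearing in $D_\beta$. Since $C_\alpha(x) = x \wedge \alpha$ realizes one-sided cutoffs, composing these with reflections and translations should produce arbitrary truncations of the form $x \mapsto (x \vee a) \wedge b$, and every normal contraction is a uniform (monotone) limit of such piecewise-linear truncations. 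This is the analogue of the classical reduction in the theory of Dirichlet forms, where operating of the unit contraction propagates to the whole family. The route (iii) $\Rightarrow$ (i) should run in parallel, using that the functions $D_\beta(x) = |x - \beta| - |\beta|$ generate, via the same closure operations, the building blocks of all normal contractions (reflections about arbitrary points shifted to fix $0$), and these two subfamilies are interchangeable up to the closure operations.

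\textbf{The main obstacle} I anticipate is verifying the closure of the operating property under composition and limits with full rigor, and in particular confirming that the specific one-parameter families $\{C_\alpha\}$ and $\{D_\beta\}$ suffice to generate everything. Closure under composition requires applying the defining inequality twice and bookkeeping the intermediate arguments carefully; closure under pointwise limits is where lower semicontinuity of $\cE$ enters, since one needs $\liminf$ control on \emph{both} $\cE(f + C_n g)$ and $\cE(f - C_n g)$ while the right-hand side is fixed. I would also lean on Lemma~\ref{lemma:reduction to continuous case} to reduce, if convenient, to the continuous approximating functionals $\cE^{(\alpha,K)}$ on finite sets, where the inequalities can be checked more concretely before being lifted back to $\cE$ by the supremum representation of Theorem~\ref{theorem:approximating forms}.

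For the homogeneous addendum, the implications (i) $\Rightarrow$ (iv) and (i) $\Rightarrow$ (v) are again immediate. The content is (iv) $\Rightarrow$ (ii) and (v) $\Rightarrow$ (iii): under positive $p$-homogeneity, operating of the single contraction $C_1$ (respectively $D_1$) should propagate to the whole scale of parameters by a rescaling argument. Explicitly, if $\cE(\lambda f) = \lambda^p \cE(f)$, then testing the inequality for $C_1$ against rescaled functions $\alpha^{-1} f$ and $\alpha^{-1} g$ and multiplying through by $\alpha^p$ should convert the $C_1$-inequality into the $C_\alpha$-inequality, because $\alpha \cdot C_1(\alpha^{-1} x) = \alpha (\alpha^{-1} x \wedge 1) = x \wedge \alpha = C_\alpha(x)$; the analogous identity $\alpha D_1(\alpha^{-1} x) = D_\alpha(x)$ handles the $D_\beta$ family for $\beta > 0$, with the sign reflection covering $\beta < 0$ and the trivial case $\beta = 0$. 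This rescaling step is short but is the essential point that makes a single contraction enough in the homogeneous setting.
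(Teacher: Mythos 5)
Your treatment of the easy implications and of the homogeneous addendum is correct and matches the paper: the rescaling identity $\alpha C_1(\alpha^{-1}x) = C_\alpha(x)$ (resp.\ $\alpha D_1(\alpha^{-1}x)=D_\alpha(x)$, with $D_\beta = D_{-\beta}\circ(-\mathrm{id})$ for negative parameters) is exactly how the paper proves (iv) $\Rightarrow$ (ii), and passing from the $D$-family to the $C$-family can indeed be done by your "interchangeability" idea, made precise in the paper via the convex-combination identity $C_\alpha = \tfrac12\,\mathrm{id} + \tfrac12(-D_\alpha)$ together with stability of the operating property under convex combinations. The genuine gap is the hard implication (ii) $\Rightarrow$ (i) (equivalently (ii) $\Rightarrow$ (iii)), and your generation strategy provably cannot close it. The closure properties you can actually establish for the set of operating contractions are composition, $C \mapsto -C$, convex combinations, and pointwise limits (this last one does use lower semicontinuity, as you say). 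Stability under translation, $C \mapsto C(\cdot-\beta)-C(-\beta)$, is \emph{not} available: the defining inequality tests $f + Cg$ and $f - Cg$ against the same $f$, and after translating $g$ the two resulting base points $f \pm C(-\beta)\cdot 1$ differ, so the inequality for $C$ cannot be applied; establishing translation-stability is essentially the implication you are trying to prove. Worse, even granting all five operations, the generated class misses most normal contractions. For $1$-Lipschitz $h$ let $\tau(h)$ be the diameter of the set of difference quotients $\{(h(y)-h(x))/(y-x) \colon x<y\}$. Then $\tau(\mathrm{id})=0$, $\tau(C_\alpha)=1$, and the property $\tau\le 1$ is preserved by negation, translation, convex combinations, pointwise limits, and also by composition (difference quotients of $h_1\circ h_2$ are products of difference quotients, and the product of two subintervals of $[-1,1]$ of length at most $1$ again has diameter at most $1$, as one checks on the corner products). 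But $\tau(D_\beta)=2$ for $\beta\neq 0$, so $D_\beta$ --- and every non-monotone contraction, e.g.\ $x\mapsto|x|$ --- can never be generated. For the same reason your claim that every normal contraction is a (uniform or monotone) limit of truncations $(x\vee a)\wedge b$ is false: all such limits are monotone.

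The paper's actual route for (i) $\Leftrightarrow$ (ii) is the reduction you only mention in passing, followed by a citation: using the approximating functionals $\cE^{(\alpha,K)}$ and Lemma~\ref{lemma:reduction to continuous case}, the statement is reduced to a finite set $X$, where $\cF(X)=L^2(X,\mu)$ for the counting measure and $\cE$ is continuous, and there the equivalence is exactly \cite[Theorem~2]{Puc25} on nonlinear Dirichlet forms. That external theorem carries the real content of the hard direction; it is not recoverable by an elementary generation argument, and without it (or a substitute proof) your proposal does not establish the theorem.
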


 \begin{remark}
  In the context of lower semicontinuous convex functionals on $L^2(X,\mu)$ the equivalence of (i) and (ii) in the previous theorem was recently obtained in \cite[Theorem 2]{Puc25}. The corresponding functionals are called nonlinear Dirichlet forms.  Indeed, we use our approximations via functionals on functions on finite sets to reduce the equivalence to this case. 
 \end{remark}

 \begin{proof}
  (i) $\Leftrightarrow$ (ii): According to Lemma~\ref{lemma:reduction to continuous case} and Proposition~\ref{proposition:basic properties approximating forms}, it suffices to consider the case where $X$ is finite and $\cE$ is continuous. If we let $\mu$ be the counting measure on all subsets of $X$, then in this case $\cF(X) = L^2(X,\mu)$ and the topologies of pointwise convergence and $L^2$-convergence agree. Hence, the equivalence (i) and (ii) follows from \cite[Theorem~2]{Puc25}, which treats (lower semi)continuous convex functionals on $L^2$-spaces. 
  
  (i) $\Rightarrow$ (iii): This is clear.
  
  (iii) $\Rightarrow$ (ii): It is readily verified that for $\alpha > 0$ we have
  $$C_\alpha = \frac{1}{2} {\rm id} + \frac{1}{2} (-D_\alpha).$$
  Since $\cE$ is convex and $-D_\alpha$ operates on $\cE$, we obtain the claim. 
  
  Now assume that $\cE$ is positively $p$-homogeneous. (i) $\Rightarrow$ (v) is obvious and (v) $\Rightarrow$ (iv) can be proven as (iii) $\Rightarrow$ (ii). 
  
  (iv) $\Rightarrow$ (ii): If we let $\tilde f  = \alpha^{-1} f$, this implication follows from 
  \begin{align*}
  \cE(f + (g \wedge \alpha)) + \cE(f - (g \wedge \alpha)) &= \alpha^p\cE(\tilde f + (g \wedge 1)) + \alpha^p \cE(\tilde f- (g \wedge 1))\\
  &\leq \alpha^p\cE(\tilde f + g ) + \alpha^p \cE(\tilde f - g)\\
  &= \cE(f + g) + \cE(f - g).   \hfill \qedhere
  \end{align*}

  \end{proof}

 With all of these preparations we can now give the main definition of this paper. 
 
\begin{definition}[Nonlinear resistance form]\label{definition:nonlinear resistance form}
 A functional $\cE \colon \cF(X) \to [0,\infty]$ is called {\em nonlinear resistance form} if the following are satisfied. 
\begin{enumerate}[(nRF1)]
 \item $\cE$ is convex with $\cE(0) = 0$. 
 \item $\cE$ is lower semicontinuous with respect to pointwise convergence.
 \item For all $x,y \in X$ we have $R(x,y) < \infty$. 
 \item All normal contractions operate on $\cE$. 
\end{enumerate}
\end{definition}

\begin{remark}
\begin{enumerate}[(a)]
 \item Assume that $\cE$ is a lower semicontinuous quadratic form, i.e., it is $2$-homogeneous and satisfies the parallelogram identity. Then the compatibility with all normal contractions is reduced to the compatibility with $x \mapsto x \wedge 1$, see Theorem~\ref{proposition:reduction contraction}. Moreover, the parallelogram identity yields  $\cE(f + g \wedge 1) + \cE(f - g \wedge 1) = 2 \cE(f) + 2\cE(g \wedge 1)$ and $\cE(f + g) + \cE(f - g) = 2\cE(f) + 2\cE(g)$. Hence, in this case, (nRF4) is equivalent to $\cE(f \wedge 1) \leq \cE(f)$ for all $f \in \cF(X)$. In particular, this shows that resistance forms in the sense of Kigami \cite{Kig03} are nonlinear resistance forms (the lower semicontinuity of resistance forms in the sense of Kigami \cite{Kig03} follows from Theorem~\ref{theorem:charcterization lower semicontinuity constant kernel}).  In Proposition~\ref{proposition:p resistance} below we shall see that their $p$-homogeneous counterparts, the $p$-resistance forms recently introduced by Kajino and Shimizu \cite{KS25}, also fall into our framework.
 
 \item Usually, when dealing with resistance forms, one makes slightly different assumptions. More precisely, one (additionally) assumes the following: 
 \begin{enumerate}[(1)]
  \item   $\ker \cE = \R \cdot 1$   (which would have to be replaced by $\ker \av{\cdot}_L = \R \cdot 1$ in the fully nonlinear situation). 
  \item  $D(\cE)$ separates the points of $X$.
  \item $D(\cE)/\R \cdot 1$ is complete with respect to an appropriate norm (which would have to be replaced by the completeness of $(M(\cE)/\R \cdot 1,\av{\cdot}_L)$ in the fully nonlinear situation). 
 \end{enumerate}

 Here, we do not assume (1) and (2), because they are not necessary for the purposes of our paper and they exclude the following natural construction: Given a nonlinear resistance form $\cE$ and $F \subset X$, we consider  
 $$\cE_F \colon \cF(X) \to [0,\infty], \quad \cE_F(f) = \begin{cases}
                                                         \cE(f) &\text{if } f =  0 \text{ on }F\\
                                                         \infty &\text{else}
                                                        \end{cases}.
$$
In our terminology $\cE_F$  is a nonlinear resistance form, whereas if $F \neq \emptyset$, it does not satisfy (1),  and if $F$ contains more than $2$ points, it does not satisfy  (2). The functional $\cE_F$ corresponds to imposing 'Dirichlet boundary conditions' at $F$. 

Regarding (3), we discussed in Subsection~\ref{subsection:lower semicontinuity} how for reflexive and symmetric functionals lower semicontinuity is related to the completeness of the modular space. In view of our use of convex analysis, we believe that lower semicontinuity is the more natural assumption in the fully nonlinear case. For resistance forms in the sense of Kigami \cite{Kig03} and $p$-resistance forms (with $1 < p < \infty$) in the sense of Kajino and Shimizu \cite{KS25}, lower semicontinuity is equivalent to completeness of the modular space, see also the discussion in Subsection~\ref{subsection:p-resistance forms}. We would like to stress however, that with our notion one could also treat the case $p=1$, which is not covered by \cite{KS25}.
\end{enumerate}
 
\end{remark}


 \subsection{The triangle inequality for the resistance and additivity over serial circuits}
 
 In this subsection we give the justification why we study the resistance $\cR_t$ instead of $R$: It is an extended pseudometric that is additive  over serial circuits.

\begin{theorem}[Triangle inequality for the resistance]\label{theorem:triangle inequality}
Let $\cE$ be a nonlinear resistance form.  
\begin{enumerate}[(a)]
 \item The triangle inequality $\cR_t(x,z) \leq \cR_t(x,y) + \cR_t(y,z)$ holds for all $x,y,z \in X$ and $t > 0$. 
 \item If $\cE$ is symmetric, then  $\cR_t(x,y) = \cR_t(y,x)$ for all $x,y \in X$ and $t > 0$.
 \item If $\cE$ satisfies the $\nabla_2$-condition, then $\cR_t(x,y) < \infty$ for all $x,y \in X$ and $t> 0$.
\end{enumerate}
In particular, if $\cE$ is symmetric and satisfies the $\nabla_2$-condition, then for each $t > 0$ the function $\cR_t$ is a pseudometric. 
\end{theorem}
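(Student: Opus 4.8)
The three assertions are of quite different depth: parts (b) and (c) are short consequences of the machinery already set up, whereas part (a) is the heart of the matter and the only place where compatibility with normal contractions is genuinely used.

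For (a) I would fix $x,y,z\in X$, $t>0$ and an arbitrary $f\in\cF(X)$, and try to build from $f$ two competitors $g,h\in\cF(X)$ with
\[
t\big(g(x)-g(y)\big)+t\big(h(y)-h(z)\big)\ \ge\ t\big(f(x)-f(z)\big),
\qquad
\cE(g)+\cE(h)\ \le\ \cE(f).
\]
Granting this, feeding $g$ into the supremum defining $\cR_t(x,y)$ and $h$ into the one defining $\cR_t(y,z)$ gives $\cR_t(x,y)+\cR_t(y,z)\ge t(f(x)-f(z))-\cE(f)$, and taking the supremum over $f$ yields the triangle inequality.

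To construct $g$ and $h$ I would invoke the defining inequality of compatibility, $\cE(u+Cw)+\cE(u-Cw)\le\cE(u+w)+\cE(u-w)$, with the special choice $u=w=\tfrac12 f$. The decisive feature of this choice is that the reflected argument $u-w$ is the zero function, so the right-hand side collapses to $\cE(f)+\cE(0)=\cE(f)$; this is precisely what prevents a spurious boundary term at $y$ from appearing. Taking $C=D_{f(y)/2}$, the normal contraction $s\mapsto |s-f(y)/2|-|f(y)/2|$ (which operates on $\cE$ by (nRF4)), and putting $g=u+Cw$, $h=u-Cw$, a direct computation should give $g(x)-g(y)=(f(x)-f(y))^+$ and $h(y)-h(z)=(f(y)-f(z))^+$, so that the two linear parts are positive parts whose sum dominates $f(x)-f(z)$, while the energy bound is immediate from the contraction inequality and $\cE(0)=0$. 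I expect this splitting to be the main obstacle: a naive truncation of $f$ at the level $f(y)$ only yields $\cE(g)+\cE(h)\le\cE(f)+\cE(f(y)\1)$, and the constant contribution $\cE(f(y)\1)$ need not vanish (for instance for forms carrying Dirichlet conditions on a subset $F\subset X$). Forcing $u-w=0$ and peaking the contraction $D_\beta$ at $\beta=f(y)/2$ is what removes it.

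For (b) I would simply substitute $-f$ for $f$ in the supremum defining $\cR_t(x,y)$ and use $\cE(-f)=\cE(f)$, which is the symmetric reformulation recorded in Proposition~\ref{prop:elementary properties resistance}(b). For (c) I would pass to the conjugate: by Proposition~\ref{prop:elementary properties resistance}(c) we have $\cR_t(x,y)=\cE^*\big(t(\delta_x-\delta_y)\big)$, and $R(x,y)<\infty$ forces $\delta_x-\delta_y\in M(\cE^*)$ via Corollary~\ref{coro:finiteness of resistance}. The $\nabla_2$-condition on $\cE$ together with lower semicontinuity implies, by Lemma~\ref{lemma:duality delta and nabla}, that $\cE^*$ satisfies the $\Delta_2$-condition, so that $D(\cE^*)=M(\cE^*)$ is a positive cone; hence $t(\delta_x-\delta_y)\in D(\cE^*)$ for every $t>0$ and $\cR_t(x,y)<\infty$. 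Finally, for the \emph{in particular} statement I would observe that $\cR_t\ge 0$ (the supremum includes $f=0$) and $\cR_t(x,x)=\sup_f(-\cE(f))=0$ since $\cE\ge 0$ with $\cE(0)=0$, and then combine (b), (a) and (c) to conclude that each $\cR_t$ is a finite pseudometric.
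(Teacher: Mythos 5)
Your proof is correct and follows essentially the same route as the paper: the paper's proof of (a) uses exactly your contraction $D_{f(y)/2}$ applied with $u=w=f/2$ (so that the right-hand side collapses to $\cE(f)+\cE(0)$), producing the same pair $g = f\vee f(y) - f(y)_+$, $h = f\wedge f(y) + f(y)_-$, and its proof of (c) is the same conjugate/cone argument via Lemma~\ref{lemma:duality delta and nabla} and Corollary~\ref{coro:finiteness of resistance}. The only difference is cosmetic: the paper handles (a) by a three-case analysis (the cases $f(y)>f(x)$ and $f(y)<f(z)$ being trivial, and in the remaining case $f(z)\le f(y)\le f(x)$ the increments of $g,h$ equal those of $f$), whereas your observation that $g(x)-g(y)=(f(x)-f(y))^+$ and $h(y)-h(z)=(f(y)-f(z))^+$ always dominate the corresponding differences removes the case distinction, a slight streamlining of the same argument.
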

\begin{proof}
 (a):  We need to show
 $$ T: = t(f(x) - f(z)) - \mathcal{E}(f) \leq \mathcal{R}_t(x,y) + \mathcal{R}_t(y,z) $$
 for all $f \in D(\cE)$. We have the trivial identity
  \begin{align*}
 t(f(x) - f(z)) - \mathcal{E}(f)  & = t(f(x) - f(y)) + t(f(y) -  f(z)) - \mathcal{E}(f).
 \end{align*}
 Hence, if $f(y) > f(x)$, then $T \leq \cR_t(y,z)$ and if $f(y) < f(z)$, then $T \leq \cR_t(x,y)$.
 
 It remains to treat the case $f(z) \leq f(y) \leq f(x)$.  Let $\alpha = f(y)$ and consider the normal contraction 
   $$ C\colon \R\to\R, \quad C(t) = |t - \tfrac{\alpha}{2}| - |\tfrac{\alpha}{2}|. $$
 Using the compatibility of $\cE$ with normal contractions, we obtain
 \begin{align*}
  \mathcal{E}(f) & = \mathcal{E}(f) + \mathcal{E}(0) \\
                 & = \mathcal{E}(\tfrac{f}{2} + \tfrac{f}{2}) + \mathcal{E}(\tfrac{f}{2} - \tfrac{f}{2}) \\
                 & \geq \mathcal{E}(\tfrac{f}{2} + C(\tfrac{f}{2})) + \mathcal{E}(\tfrac{f}{2} - C(\tfrac{f}{2})) \\
                 & = \mathcal{E}(f\vee \alpha - \alpha_+) + \mathcal{E}(f\wedge \alpha + \alpha_-).
 \end{align*}
 Hence, we can estimate 
 \begin{align*}
  T & \leq  t(f(x) - f(y)) - \mathcal{E}(f\vee \alpha - \alpha_+)   + t(f(y) - f(z)) - \mathcal{E}(f\wedge \alpha + \alpha_-) \\
    & \leq \mathcal{R}_t(x,y) + \mathcal{R}_t(y,z).
 \end{align*}
The last inequality follows from the fact that $g = f \vee \alpha - \alpha_+$ and $h = f\wedge \alpha + \alpha_-$  satisfy $f(x) - f(y) = g(x) - g(y)$ and $f(y) - f(z) = h(y) - h(z)$ because $f(z) \leq \alpha = f(y) \leq f(x)$. 

(b): This is trivial.

(c): By  Lemma~\ref{lemma:duality delta and nabla} the $\nabla_2$-condition is equivalent to the $\Delta_2$-condition for $\cE^*$. In particular, $D(\cE^*)$ is a positive cone and we obtain $t (\delta_x - \delta_y) \in D(\cE^*)$ for all $t > 0$. 
%
%
%
%
%
%
%
\end{proof}

\begin{remark}\label{remark:triangle inequality and contractions}
  Except in (c), the proof of the theorem does not use lower semicontinuity. Moreover, it does not use compatibility with all normal contractions, but only with the very specific family $D_\beta \colon \R \to \R, \, t \mapsto |t - \beta| - |\beta|$, $\beta \in \R$. However, in Theorem~\ref{proposition:reduction contraction} we saw that for lower semicontinuous functionals the compatibility with the family $D_\beta$, $\beta \in \R$, is equivalent to the compatibility with all normal contractions. In this sense, the compatibility with all normal contractions that we assume is the minimal assumption required for our proof of the triangle inequality for $\cR_t$.
  \end{remark}

\begin{corollary}
 If $1 < p < \infty$, $q^{-1} + p^{-1} = 1$ and $\cE$ is a symmetric positively $p$-homogeneous nonlinear resistance form, then $R^q$ is a pseudometric. 
\end{corollary}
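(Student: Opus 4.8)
The plan is to deduce this directly from the results on the $t$-resistance, using the explicit formula relating $\cR_t$ to $R$ in the homogeneous case. First I would observe that all hypotheses of Theorem~\ref{theorem:triangle inequality} are in force: a positively $p$-homogeneous functional with $p > 1$ satisfies the $\nabla_2$-condition (as recorded just before Proposition~\ref{prop:seminorms for homogenous functionals}), and by assumption $\cE$ is symmetric. Hence Theorem~\ref{theorem:triangle inequality} guarantees that for each fixed $t > 0$ the $t$-resistance $\cR_t$ is a pseudometric: it is symmetric by (b), finite by (c), it vanishes on the diagonal since $\cR_t(x,x) = \sup\{-\cE(f) \mid f \in \cF(X)\} = 0$, and it satisfies the triangle inequality by (a).

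Next I would invoke the homogeneous formula. Since $\cE$ is lower semicontinuous by (nRF2) and positively $p$-homogeneous with $p > 1$, Theorem~\ref{theorem:homogeneous resistance} yields
$$\cR_t(x,y) = (p-1)\left(\frac{t}{p}\right)^q R(x,y)^q$$
for all $x,y \in X$. Choosing $t = p$ gives $\cR_p = (p-1) R^q$, so that $R^q = \tfrac{1}{p-1}\cR_p$ is a strictly positive scalar multiple of $\cR_p$.

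Finally I would conclude by noting that a positive scalar multiple of a pseudometric is again a pseudometric: non-negativity, symmetry, the vanishing on the diagonal, and the triangle inequality are all preserved under multiplication by the positive constant $(p-1)^{-1}$. Since $\cR_p$ is a pseudometric by the first step, $R^q$ is a pseudometric as well.

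This is essentially a bookkeeping corollary, so I do not expect a genuine obstacle. The only points requiring care are confirming that the hypotheses of the two invoked theorems hold, namely that $p$-homogeneity with $p > 1$ supplies the $\nabla_2$-condition needed for finiteness of $\cR_t$ in Theorem~\ref{theorem:triangle inequality}~(c), and that lower semicontinuity (built into the definition of a nonlinear resistance form) is what licenses the use of Theorem~\ref{theorem:homogeneous resistance}.
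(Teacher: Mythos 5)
Your proposal is correct and follows essentially the same route as the paper: verify the $\nabla_2$-condition (the paper obtains it via the $q$-homogeneity of $\cE^*$, you cite the direct remark for $p$-homogeneous functionals with $p>1$ — both are valid), apply Theorem~\ref{theorem:triangle inequality} to get that $\cR_t$ is a pseudometric, and then use Theorem~\ref{theorem:homogeneous resistance} to identify $R^q$ as a positive constant multiple of $\cR_t$. Your explicit choice $t=p$ and the check that $\cR_t$ vanishes on the diagonal are just slightly more detailed bookkeeping than the paper's proof.
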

\begin{proof}
Under the assumptions $\cE^*$ is $q$-homogeneous and so $\cE$ satisfies the $\nabla_2$-condition. Hence, the previous theorem shows that for all $t> 0$ the resistance $\cR_t$ is a pseudometric. Since by Theorem~\ref{theorem:homogeneous resistance} $R^q$ is a constant multiple of  $\cR_t$, we obtain that $R^q$ is a pseudometric.
\end{proof}

\begin{remark}
In \cite{KS25} $p$-resistance forms are considered with $1 < p < \infty$, which are certain $p$-homogeneous convex functionals having a strong compatibility with normal contractions (see Subsection~\ref{subsection:p-resistance forms} for a precise definition). The fact that for such forms $R^q$ is a pseudometric is contained in \cite[Corollary~6.32]{KS25} (note that the quantity $R_\cE$ in  \cite{KS25} corresponds to $R^p$ in our notation -  this explains the difference of the exponents, see also Remark~\ref{remark:comparing resistances}). 

We believe that our approach to proving the triangle inequality for $R^q$ is somewhat more transparent: The triangle inequality for $\cR_t$ has a relatively simple proof and for $p$-homogeneous functionals $\cR_t$ and $R^q$ are the same up to a constant.  
\end{remark}

Next we discuss why the resistance is additive over serial circuits. There are two possible ways of producing serial circuits. Given two resistance forms and two points in their underlying space, one can either identify these two points directly or connect them by a single resistor and then let the strength of this resistor tend to $0$.   

\begin{theorem}[Additivity over serial circuits - I] \label{theorem:additivity serial circuits}
 Let $\cE_i$, $i = 1,2$ be nonlinear resistance forms on disjoint sets $X_i$, $i = 1,2$, with $\ker \cE_i \supset \R \cdot 1$. For fixed $\xi_i \in X_i$, $i = 1,2$, we define $\cE \colon \cF(X_1 \cup X_2) \to [0,\infty]$ by
 $$\cE(f) = \begin{cases}
                                                               \cE_1(f|_{X_1}) +  \cE_2(f|_{X_2}) &\text{if } f|_{X_1}(\xi_1) =  f|_{X_2}(\xi_2)\\
                                                               \infty &\text{else}
                                                               \end{cases}.
$$
 Then $\cE$ is a nonlinear resistance form with $\ker \cE \supset \R \cdot 1$ and for $x_i \in X_i$, $i = 1,2$, and $t > 0$  we have
 $$\cR_{t,\cE}(x_1,x_2) = \cR_{t,\cE_1}(x_1,\xi_1)  + \cR_{t,\cE_2}(\xi_2,x_2).$$
\end{theorem}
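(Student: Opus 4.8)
The plan is to verify the four axioms of a nonlinear resistance form for $\cE$ directly, and then to compute $\cR_{t,\cE}(x_1,x_2)$ from its defining supremum, the whole weight of the argument resting on one structural fact about the summands.

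First I would dispose of the claim that $\cE$ is a nonlinear resistance form with $\R\cdot 1\subset\ker\cE$. Write $H=\{f\in\cF(X_1\cup X_2)\mid f(\xi_1)=f(\xi_2)\}$ and let $r_i\colon\cF(X_1\cup X_2)\to\cF(X_i)$ be the (pointwise continuous) restriction maps, so that $\cE=\cE_1\circ r_1+\cE_2\circ r_2$ on $H$ and $\cE=\infty$ off $H$. Convexity and $\cE(0)=0$ are immediate since $H$ is a linear subspace containing $0$, and $\R\cdot 1\subset\ker\cE$ follows at once from $\ker\cE_i\supset\R\cdot 1$. Lower semicontinuity holds because $H$ is $\mathfrak P$-closed, being the kernel of the continuous functional $\delta_{\xi_1}-\delta_{\xi_2}$, and a sum of nonnegative lower semicontinuous functionals is lower semicontinuous. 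For (nRF3), any $f$ with $\cE(f)\le 1$ restricts to $f|_{X_i}$ with $\cE_i(f|_{X_i})\le 1$, so $R_\cE(x,y)\le R_{\cE_i}(x,y)<\infty$ whenever $x,y\in X_i$; since every admissible $f$ satisfies $f(\xi_1)=f(\xi_2)$ we have $R_\cE(\xi_1,\xi_2)=0$, and the triangle inequality of Proposition~\ref{proposition: some basic inequalities}~(a) then yields finiteness of $R_\cE(x,y)$ for $x\in X_1$, $y\in X_2$ as well. For (nRF4), if $f+g,f-g\in H$ then adding and subtracting the two defining equalities gives $f(\xi_1)=f(\xi_2)$ and $g(\xi_1)=g(\xi_2)$, whence $Cg(\xi_1)=C(g(\xi_1))=C(g(\xi_2))=Cg(\xi_2)$ and thus $f\pm Cg\in H$; the required inequality then reduces, using that contractions commute with $r_i$, to the contraction compatibility of $\cE_1$ and $\cE_2$ applied to the restrictions, the case $f\pm g\notin H$ being trivial.

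For the additivity formula I would begin from $\cR_{t,\cE}(x_1,x_2)=\sup_{f}\{t(f(x_1)-f(x_2))-\cE(f)\}$, note that the supremum may be taken over $f\in H$, and exploit $f(\xi_1)=f(\xi_2)$ to split $f(x_1)-f(x_2)=(f(x_1)-f(\xi_1))+(f(\xi_2)-f(x_2))$. This turns the objective into $A(f|_{X_1})+B(f|_{X_2})$ with $A(u)=t(u(x_1)-u(\xi_1))-\cE_1(u)$ and $B(v)=t(v(\xi_2)-v(x_2))-\cE_2(v)$, coupled only through the single scalar value $f(\xi_1)=f(\xi_2)$. The inequality ``$\le$'' is then immediate: for each admissible $f$ one has $A(f|_{X_1})\le\sup_u A(u)=\cR_{t,\cE_1}(x_1,\xi_1)$ and $B(f|_{X_2})\le\cR_{t,\cE_2}(\xi_2,x_2)$. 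For ``$\ge$'' I would take near-maximisers $u_0,v_0$ of $A$ and $B$, replace them by $u_0-u_0(\xi_1)\cdot 1$ and $v_0-v_0(\xi_2)\cdot 1$, glue these into one $f\in H$ with $f(\xi_1)=0=f(\xi_2)$, and read off that this $f$ realises $A(u_0)+B(v_0)$ up to the prescribed error.

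The step that needs a genuine argument, and which I expect to be the main obstacle, is that the shift $u_0\mapsto u_0-u_0(\xi_1)\cdot 1$ alters neither $A$ nor the energy, i.e. that each $\cE_i$ is invariant under adding constants: $\cE_i(f+c\cdot 1)=\cE_i(f)$ for all $c\in\R$. I would deduce this from $\ker\cE_i\supset\R\cdot 1$ together with convexity and lower semicontinuity by a recession (asymptotic slope) argument. The convex function $s\mapsto\cE_i(f+s\cdot 1)$ has asymptotic slopes at $+\infty$ and at $-\infty$ equal to the corresponding asymptotic slopes of $s\mapsto\cE_i(s\cdot 1)\equiv 0$, because for a closed proper convex functional the recession function is independent of the base point (see \cite{Zua02}). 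Hence both asymptotic slopes vanish, and a convex function of one real variable whose slopes at $+\infty$ and $-\infty$ both vanish is constant, giving $\cE_i(f+c\cdot 1)=\cE_i(f)$. Since $A$ depends on $u$ only through $u(x_1)-u(\xi_1)$ and $B$ on $v$ only through $v(\xi_2)-v(x_2)$, this invariance is exactly what legitimises the gluing in the ``$\ge$'' direction and completes the proof.
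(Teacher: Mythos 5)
Your proof is correct, and its skeleton coincides with the paper's own argument: verify the axioms directly (with the same treatment of the elementary resistance via the junction points $\xi_1,\xi_2$), then compute $\cR_{t,\cE}(x_1,x_2)$ by restricting the supremum to functions vanishing at $\xi_1$ (hence at $\xi_2$), so that the objective decouples into the two suprema defining $\cR_{t,\cE_1}(x_1,\xi_1)$ and $\cR_{t,\cE_2}(\xi_2,x_2)$. The only genuine point of divergence is the constant-invariance property $\cE_i(f+c\cdot 1)=\cE_i(f)$, which you correctly single out as the crux. You derive it from base-point independence of recession functions of closed proper convex functionals, citing \cite{Zua02}; the paper instead proves it as a standalone lemma (Lemma~\ref{lemma:slightly different formula resistance}) in two lines: since $(1-\varepsilon)f+\varepsilon\cdot(c/\varepsilon)\to f+c$ pointwise as $\varepsilon\to 0+$, lower semicontinuity and convexity give $\cE_i(f+c)\leq \liminf_{\varepsilon\to 0+}\left((1-\varepsilon)\cE_i(f)+\varepsilon\,\cE_i(c/\varepsilon)\right)=\cE_i(f)$, and the reverse inequality follows by replacing $c$ with $-c$. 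Both arguments are valid; the paper's has the advantage of being self-contained --- it uses exactly the ingredients (convexity, $\cE_i$ vanishing on constants, lower semicontinuity) that your recession argument ultimately rests on, without invoking the support-function representation of recession functions in infinite-dimensional spaces --- and it is stated for the glued form $\cE$ itself, which streamlines the reduction of the supremum. So your recession-function detour could simply be replaced by this direct computation, shortening the proof without changing its structure.
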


\begin{remark}
 \begin{enumerate}[(a)]
  \item The additivity over serial circuits seems to be a new result for nonlinear resistance forms. In \cite{KS25} it was only shown for a very specific example \cite[Example 6.34]{KS25}.
  \item The convexity of $\cE$ implies $\ker \av{\cdot}_L \subset \ker \cE$. Hence, the previous theorem can be applied to resistance forms with $\ker \av{\cdot}_L = \R \cdot 1$. 
 \end{enumerate}
\end{remark}

Before proving the theorem we need a little lemma. 

\begin{lemma}\label{lemma:slightly different formula resistance}
 Assume that $\cE$ is lower semicontinuous and satisfies $\cE(K) = 0$ for each constant function $K \in \R$. Then $\cE(f + K) = \cE(f)$ for all $f \in \cF(X)$. In particular, for each $z  \in X$ we have
 $$\cR_t(x,y) = \sup \{t (f(x) - f(y)) - \cE(f) \mid f \in D(\cE) \text{ with } f(z) = 0\}.$$
\end{lemma}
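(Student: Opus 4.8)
The plan is to deduce the translation invariance $\cE(f + K) = \cE(f)$ from the convex-conjugate representation, and then obtain the resistance formula by an elementary shift. The only place where the lower semicontinuity hypothesis enters is through the Fenchel–Moreau identity $\cE = \cE^{**}$, which on $\cF(X)$ reads
$$\cE(f) = \sup\{(\varphi,f) - \cE^*(\varphi) \mid \varphi \in \cF_c(X)\}.$$
The decisive observation is that the hypothesis $\cE(c\1) = 0$ for all $c \in \R$ constrains the effective domain of $\cE^*$. Indeed, testing the supremum defining $\cE^*(\varphi)$ against the constant functions $c\1$ gives $\cE^*(\varphi) \geq (\varphi,c\1) - \cE(c\1) = c\sum_{x} \varphi(x)$ for every $c \in \R$. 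Letting $|c| \to \infty$ with the sign of $\sum_x \varphi(x)$ then shows $\cE^*(\varphi) = \infty$ whenever $\sum_x \varphi(x) \neq 0$, so that $D(\cE^*) \subseteq \{\varphi \in \cF_c(X) \mid \sum_x \varphi(x) = 0\}$.

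With this constraint the first claim is immediate. For $\varphi \in D(\cE^*)$ and a constant $K$ I would compute $(\varphi, f + K) = (\varphi,f) + K \sum_x \varphi(x) = (\varphi,f)$, so the two suprema defining $\cE^{**}(f + K)$ and $\cE^{**}(f)$ agree term by term over $D(\cE^*)$. Invoking $\cE = \cE^{**}$ yields $\cE(f + K) = \cE(f)$ for all $f \in \cF(X)$. (Here lower semicontinuity is genuinely needed: without $\cE = \cE^{**}$, the term-by-term matching would only give one inequality.)

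For the resistance formula I would first note that in the supremum defining $\cR_t(x,y)$ only functions in $D(\cE)$ contribute, since any $f$ with $\cE(f) = \infty$ produces the value $-\infty$, whereas $f = 0$ already yields $0$. Given $f \in D(\cE)$, I would then pass to $\tilde f = f - f(z)\1$: the translation invariance just proved gives $\cE(\tilde f) = \cE(f)$, so $\tilde f \in D(\cE)$, while the constant cancels in $\tilde f(x) - \tilde f(y) = f(x) - f(y)$, leaving the quantity $t(f(x) - f(y)) - \cE(f)$ unchanged. Since $\tilde f(z) = 0$, the supremum over $D(\cE)$ coincides with the supremum over $\{f \in D(\cE) \mid f(z) = 0\}$, which is exactly the asserted identity.

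I expect the only substantive step to be the identification of the domain constraint $\sum_x\varphi(x) = 0$ on $D(\cE^*)$; the passage to translation invariance and the final shift are pure bookkeeping. The subtle point to keep in mind throughout is that the dual of $\cF(X)$ is $\cF_c(X)$, so the sums $\sum_x \varphi(x)$ are genuinely finite and all pairings are well defined.
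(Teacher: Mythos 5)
Your proof is correct, but it takes a genuinely different route from the paper. The paper's argument is a three-line direct computation: for $\varepsilon > 0$ write $(1-\varepsilon)f + K = (1-\varepsilon)f + \varepsilon\,(K/\varepsilon)$ as a convex combination, use $\cE(K/\varepsilon) = 0$ to get $\cE((1-\varepsilon)f + K) \leq (1-\varepsilon)\cE(f)$, and then let $\varepsilon \to 0+$, invoking lower semicontinuity along the pointwise convergence $(1-\varepsilon)f + K \to f + K$ to conclude $\cE(f+K) \leq \cE(f)$ (the reverse inequality follows by applying this to $f+K$ and $-K$). You instead pass through Fenchel--Moreau: the hypothesis $\cE(c\1) = 0$ forces $D(\cE^*)$ into the mean-zero hyperplane $\{\varphi \in \cF_c(X) \mid \sum_x \varphi(x) = 0\}$, after which the suprema defining $\cE^{**}(f+K)$ and $\cE^{**}(f)$ agree term by term. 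Both uses of lower semicontinuity are legitimate, and your handling of the finitely supported dual is careful where it needs to be. What the paper's proof buys is brevity and elementarity --- it needs nothing beyond convexity and the definition of lower semicontinuity. What yours buys is a structural dual fact that the paper never states: translation invariance of $\cE$ is equivalent to the conjugate being supported on mean-zero functionals, which dovetails nicely with the paper's identity $\cR_t(x,y) = \cE^*(t(\delta_x - \delta_y))$, since $\delta_x - \delta_y$ is exactly such a functional. The final reduction of the supremum to $\{f \in D(\cE) \mid f(z) = 0\}$ is the same shift argument in both proofs.
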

\begin{proof}
 Using convexity and lower semicontinuity, we obtain 
 \begin{align*}
 \cE(f + K) &\leq \liminf_{\varepsilon \to 0+} \cE((1-\varepsilon) f + \varepsilon K/\varepsilon)\\
  &\leq \liminf_{\varepsilon \to 0+} \left((1-\varepsilon) \cE(f) + \varepsilon \cE(K/\varepsilon)\right)\\
  &=\cE(f).  
 \end{align*}
 The formula for the resistance then follows from $\cE(f) = \cE(f - f(z))$. 
\end{proof}

\begin{proof}[Proof of Theorem~\ref{theorem:additivity serial circuits}]
It is clear that $\cE$ is convex with $\cE(0) = 0$, lower semicontinuous and that each normal contraction operates on $\cE$. For the finiteness of the elementary resistance we let $f \in \cF(X_1 \cup X_2)$ with $\cE(f) \leq 1$. Then $f(\xi_1) = f(\xi_2)$ and for $i =1,2$ we have $\cE_i(f|_{X_i}) \leq 1$. Hence, if $x,y \in X_i$, then $f(x) - f(y) = f|_{X_i}(x) - f|_{X_i}(y) \leq R_{\cE_i}(x,y)$ and if $x \in X_1$ and $y \in X_2$, then 
$$f(x) - f(y)  = f(x) - f(\xi_1) + f(\xi_2) - f(y) \leq R_{\cE_1}(x,\xi_1) + R_{\cE_2}(\xi_2,y). $$
Taking the supremum over such $f$ yields finiteness of the elementary resistance.   
 
Additivity of the resistance: By the previous lemma, in the definition of $\cR_{t,\cE}$ we can take the supremum over functions $f \in D(\cE)$ with $f(\xi_1)  = 0$. By the definition of $\cE$, such functions also satisfy $f(\xi_2)= 0$. Since for any $f_1 \in D(\cE_1)$ and $f_2 \in D(\cE_2)$ with $f_1(\xi_1) = f_2(\xi_2) = 0$ the composite function $f(x) = f_i(x)$ if $x \in X_i$, $i = 1,2$, belongs to $D(\cE)$, this implies

 \begin{align*}
 &\cR_t(x,y) = \sup\{t(f(x) - f(y)) - \cE(f) \mid f \in D(\cE) \text{ with } f(\xi_1) =  0 \}\\
 &= \sup\{tf(x) - \cE_1(f|_{X_1}) + t(-  f(y)) - \cE_2(f|_{X_2}) \mid f \in D(\cE) \text{ with } f(\xi_i) =  0\}\\
 &= \sup\{tf_1(x) - \cE_1(f_1) \mid f_1 \in D(\cE_1) \text{ with } f_1(\xi_1) = 0\}\\
 &\quad + \sup\{t (- f_2(y)) - \cE_2(f_2) \mid f_2 \in D(\cE_2) \text{ with } f_2(\xi_2) = 0\}\\
 &= \cR_{t,\cE_1}(x_1,\xi_1)  + \cR_{t,\cE_2}(\xi_2,x_2).
 \end{align*}
For the last equality we used Lemma~\ref{lemma:slightly different formula resistance} again. 
\end{proof}

The following theorem is  an alternative way of stating the additivity property.

\begin{theorem}[Additivity over serial circuits - II] \label{theorem:additivity serial circuitsII}
 Let $\cE_i$, $i = 1,2$ be nonlinear resistance forms on disjoint sets $X_i$, $i = 1,2$, with $\ker \cE_i \supset \R \cdot 1$. For fixed $\xi_i \in X_i$, $i = 1,2$, and $\varepsilon > 0$ we define $\cE  \colon \cF(X_1 \cup X_2) \to [0,\infty]$ by
 $$\cE(f) = \cE_1(f|_{X_1}) + \cE_2(f|_{X_2}) + \frac{1}{4\varepsilon} |f(\xi_1) - f(\xi_2)|^2. 
$$
 Then $\cE$ is a nonlinear resistance form with $\ker \cE \supset \R \cdot 1$ and for $x_i \in X_i$, $i = 1,2$, and $t > 0$  we have
 $$\cR_{t,\cE}(x_1,x_2) = \cR_{t,\cE_1}(x_1,\xi_1)  + \cR_{t,\cE_2}(\xi_2,x_2) + \varepsilon t^2.$$
\end{theorem}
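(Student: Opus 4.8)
The plan is to mirror the proof of Theorem~\ref{theorem:additivity serial circuits}, the only new ingredient being the quadratic coupling term $\tfrac{1}{4\varepsilon}|f(\xi_1) - f(\xi_2)|^2$ in place of the hard constraint $f(\xi_1) = f(\xi_2)$.

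First I would verify that $\cE$ is a nonlinear resistance form. Properties (nRF1) and (nRF2) are immediate, since $\cE$ is a sum of the convex lower semicontinuous functionals $f \mapsto \cE_i(f|_{X_i})$ (lower semicontinuous because restriction is $\mathfrak{P}$-continuous) and the continuous convex quadratic term, all vanishing at $0$. As $\ker \cE_i \supset \R \cdot 1$ and the quadratic term vanishes on constants, we get $\cE(K) = 0$ for every constant $K$, hence $\ker \cE \supset \R \cdot 1$. The compatibility (nRF4) is where the new term must be treated: each summand $f \mapsto \cE_i(f|_{X_i})$ inherits compatibility with an arbitrary normal contraction $C$ from $\cE_i$ via $(Cg)|_{X_i} = C(g|_{X_i})$, while for the quadratic term I would set $a = f(\xi_1) - f(\xi_2)$ and $b = g(\xi_1) - g(\xi_2)$ and use that $C$ is $1$-Lipschitz to write $Cg(\xi_1) - Cg(\xi_2) = \lambda b$ with $|\lambda| \leq 1$; Lemma~\ref{lemma:convexity estimate} applied to $t \mapsto t^2$ then gives $|a + \lambda b|^2 + |a - \lambda b|^2 \leq |a + b|^2 + |a - b|^2$, which is precisely the compatibility inequality for the quadratic term. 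Summing the three inequalities shows that $C$ operates on $\cE$. For (nRF3), if $\cE(f) \leq 1$ then $\cE_i(f|_{X_i}) \leq 1$ and $|f(\xi_1) - f(\xi_2)| \leq 2\sqrt{\varepsilon}$, and bounding $f(x) - f(y)$ through $\xi_1$ and $\xi_2$ exactly as in Theorem~\ref{theorem:additivity serial circuits} (now with the extra additive constant $2\sqrt{\varepsilon}$) yields finiteness of $R$.

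For the resistance formula I would first use that constants lie in $\ker \cE$, so by Lemma~\ref{lemma:slightly different formula resistance} the supremum defining $\cR_{t,\cE}(x_1,x_2)$ may be restricted to $f$ with $f(\xi_2) = 0$. Writing $f_i = f|_{X_i}$ and $s = f(\xi_1)$, the objective $t(f(x_1) - f(x_2)) - \cE(f)$ decouples into $[t f_1(x_1) - \cE_1(f_1)] + [-t f_2(x_2) - \cE_2(f_2)] - \tfrac{1}{4\varepsilon} s^2$, the two blocks being linked only through $f_1(\xi_1) = s$. Optimizing the $X_2$-block over $f_2$ with $f_2(\xi_2) = 0$ gives $\cR_{t,\cE_2}(\xi_2,x_2)$ by Lemma~\ref{lemma:slightly different formula resistance}, while optimizing the $X_1$-block over $f_1$ with $f_1(\xi_1) = s$ and shifting by the constant $s$ (using $\ker \cE_1 \supset \R \cdot 1$) gives $ts + \cR_{t,\cE_1}(x_1,\xi_1)$. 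Hence
$$\cR_{t,\cE}(x_1,x_2) = \cR_{t,\cE_1}(x_1,\xi_1) + \cR_{t,\cE_2}(\xi_2,x_2) + \sup_{s \in \R}\left(ts - \tfrac{1}{4\varepsilon}s^2\right),$$
and the elementary computation $\sup_{s \in \R}\left(ts - \tfrac{1}{4\varepsilon}s^2\right) = \varepsilon t^2$ (attained at $s = 2\varepsilon t$) completes the proof.

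The main difficulty is conceptual rather than computational: one must justify that, after the normalization $f(\xi_2) = 0$, the supremum genuinely factorizes, i.e. that $f$ is freely described by an arbitrary $f_1$ on $X_1$ (with $f_1(\xi_1) = s$) and an arbitrary $f_2$ on $X_2$ (with $f_2(\xi_2) = 0$), with the only coupling being the term $-\tfrac{1}{4\varepsilon}s^2$ depending on $f$ solely through $s$. Recognizing that this residual one-dimensional supremum is exactly the Legendre transform of the penalty, producing the series resistance $\varepsilon t^2$ of the connecting resistor --- consistent with Theorem~\ref{theorem:homogeneous resistance} applied to the $2$-homogeneous two-point form, whose elementary resistance is $2\sqrt{\varepsilon}$ --- is the heart of the argument.
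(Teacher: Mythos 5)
Your proof is correct, and for the resistance formula it takes a genuinely different route than the paper. The paper argues two-sidedly: for the upper bound it shows $\cR_{t,\cE}(x_1,\xi_1) = \cR_{t,\cE_1}(x_1,\xi_1)$ and $\cR_{t,\cE}(\xi_2,x_2) = \cR_{t,\cE_2}(\xi_2,x_2)$, invokes the triangle inequality of Theorem~\ref{theorem:triangle inequality} (which is why it needs $\cE$ to be a nonlinear resistance form at this stage), and bounds $\cR_{t,\cE}(\xi_1,\xi_2) \leq \varepsilon t^2$ by dropping $\cE_1,\cE_2$; for the lower bound it tests with $f + (\alpha_1 - f(\xi_1))1_{X_1} + (\alpha_2 - f(\xi_2))1_{X_2}$ and decouples the suprema. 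You instead normalize $f(\xi_2)=0$ via Lemma~\ref{lemma:slightly different formula resistance} and show that the supremum factorizes \emph{exactly} into the two block suprema plus $\sup_s\bigl(ts - \tfrac{1}{4\varepsilon}s^2\bigr) = \varepsilon t^2$; this yields the equality in one computation, dispenses with the triangle inequality entirely (the paper's testing argument is precisely the ``$\geq$'' half of your factorization), and makes the appearance of $\varepsilon t^2$ transparent as the Legendre transform of the coupling term. What the paper's route buys is conceptual economy --- additivity is exhibited as saturation of the already-established metric triangle inequality, matching the series-circuit intuition --- at the cost of splitting the proof into two inequalities. Your verification that $\cE$ is a nonlinear resistance form (which the paper merely delegates to the proof of Theorem~\ref{theorem:additivity serial circuits}) is also correct, in particular the treatment of the quadratic coupling term via the $1$-Lipschitz identity and Lemma~\ref{lemma:convexity estimate}.
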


\begin{proof}
The fact that $\cE$ is a nonlinear resistance form can be established as in the proof of the previous theorem, we refrain from giving details. 

The definition of $\cE$ and $\ker \cE_i = \R \cdot 1$ yield   $\cR_{t,\cE}(x_1,\xi_1) = \cR_{t,\cE_1}(x_1,\xi_1)$ and $\cR_{t,\cE}(\xi_2,x_2) = \cR_{t,\cE_2}(\xi_2,x_2)$. With this at hand, the triangle  inequality for $\cR_{t,\cE}$ implies  
$$\cR_{t,\cE}(x_1,x_2) \leq \cR_{t,\cE_1}(x_1,\xi_1) + \cR_{t,\cE_2}(\xi_2,x_2) + \cR_{t,\cE}(\xi_1,\xi_2).$$
Moreover, using Example~\ref{example:basic duality}, we have 
\begin{align*}
\cR_{t,\cE}(\xi_1,\xi_2) &= \sup\{t (f(\xi_1) - f(\xi_2)) - \cE(f) \mid f \in \cF(X)\}\\
&\leq \sup\{t (f(\xi_1) - f(\xi_2)) -  \frac{1}{4\varepsilon} |f(\xi_1) - f(\xi_2)|^2 \mid f \in \cF(X)\} \\
&= \frac{1}{2\varepsilon} \frac{(2\varepsilon t)^2}{2} = \varepsilon t^2.
\end{align*}
Combining both inequalities yields the upper bound for $\cR_{t,\cE}$.

For the converse inequality, we note that by Lemma~\ref{lemma:slightly different formula resistance}  we have $\cE_i(f_i - f(\xi_i) + \alpha_i) = \cE_i(f_i)$ for $\alpha_i \in \R$ and $f_i \in \cF(X_i)$, $i =1,2$. Hence, testing with the function $f + (\alpha_1 - f(\xi_1))1_{X_1} + (\alpha_2 - f(\xi_2))1_{X_2}$, with $f|_{X_i} \in D(\cE_i)$ and $\alpha_i \in \R$, in the definition of $\cR_{t,\cE}(x_1,x_2)$, yields
\begin{align*}
\cR_{t,\cE}(x_1,x_2) &\geq t (f(x_1) - f(\xi_1)) +   t(\alpha_1 - \alpha_2) + t(f(\xi_2) - f(x_2))\\
&- \cE_1(f|_{X_1} - f(\xi_1) + \alpha_1) - \cE_2(f|_{X_2}- f(\xi_2) + \alpha_2)  - \frac{1}{4\varepsilon} |\alpha_1 - \alpha_2|^2\\
 &= t (f(x_1) - f(\xi_1)) +   t(\alpha_1 - \alpha_2) + t(f(\xi_2) - f(x_2))\\
&- \cE_1(f|_{X_1}) - \cE_2(f|_{X_2})  - \frac{1}{4\varepsilon} |\alpha_1 - \alpha_2|^2.
\end{align*}
Since $f|_{X_1}$, $f|_{X_2}$ and $\alpha_1,\alpha_2$ can be chosen independently, taking the supremum over them yields the claim (again we use Example~\ref{example:basic duality}).  
\end{proof}

\section{Examples}\label{section:examples}

\subsection{Nonlinear graph resistance forms}

In this subsection we assume that $X$ is at most countable. Moreover, we assume that $w = \{w_{xy} \mid x,y \in X\}$ is a family of symmetric lower semicontinuous convex functions $w_{xy} \colon \R \to [0,\infty]$ with $w_{xy}(0) = 0$. We define the  functional 
$$\cE_w \colon \cF(X) \to [0,\infty], \quad \cE_w(f) = \sum_{x,y \in X} w_{xy}(f(x) - f(y)).$$
For convenience we assume $w_{xy} = w_{yx}$, because for  $\tilde w = \{\frac{1}{2} (w_{xy} + w_{yx}) \mid x,y \in X\}$ we have $\cE_w = \cE_{\tilde w}$.

\begin{example}[$p$-energy on weighted graphs] \label{example:p energy}
 A {\em weighted graph on $X$} is  a symmetric function $b \colon X \times X \to [0,\infty)$. It can be interpreted as an edge weight of an induced discrete graph $G_b = (X,E_b)$, where $(x,y) \in E_b$ if and only if $b(x,y) > 0$.  If we fix $1 \leq p < \infty$, then $w_{xy}(t) = \frac{1}{p} b(x,y) |t|^p$ is a family of convex functions as above and 
 $$\cE_w(f) = \sum_{x,y \in X}  \frac{1}{p} b(x,y)|f(x) - f(y)|^p$$
 is the well studied $p$-energy of the weighted graph $(X,b)$. Instead of constant $p$,  in the definition of $w$ it would also be possible to consider a symmetric function $p \colon X \times X \to [1,\infty)$. 
\end{example}

\begin{proposition}\label{proposition:graph resistance forms}
  $\cE_w$ is convex, symmetric and  lower semicontinuous. Moreover, all normal contractions operate on $\cE_w$. 
\end{proposition}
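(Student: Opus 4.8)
The plan is to verify all four properties appearing in the conclusion by exploiting that $\cE_w$ is a sum, indexed by pairs $(x,y) \in X \times X$, of the nonnegative convex building blocks $f \mapsto w_{xy}(f(x) - f(y))$. Convexity and symmetry are purely algebraic. For each pair the map $f \mapsto f(x) - f(y) = (\delta_x - \delta_y)(f)$ is linear, so $f \mapsto w_{xy}(f(x) - f(y))$ is convex as a composition of a linear map with the convex function $w_{xy}$; a sum of $[0,\infty]$-valued convex functions is convex, and $\cE_w(0) = \sum_{x,y} w_{xy}(0) = 0$. Symmetry follows because each $w_{xy}$ is symmetric, so $w_{xy}(-(f(x) - f(y))) = w_{xy}(f(x) - f(y))$ and hence $\cE_w(-f) = \cE_w(f)$.

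For lower semicontinuity I would write $\cE_w$ as a supremum of finite partial sums. Setting $\cE_w^S(f) = \sum_{(x,y) \in S} w_{xy}(f(x) - f(y))$ for finite $S \subset X \times X$, the fact that a sum of nonnegative terms equals the supremum of its finite partial sums gives $\cE_w = \sup_S \cE_w^S$. Each summand is lower semicontinuous with respect to pointwise convergence, being the composition of the $\mathfrak{P}$-continuous linear functional $\delta_x - \delta_y$ with the lower semicontinuous function $w_{xy}$; a finite sum of such functions is lower semicontinuous, and a supremum of lower semicontinuous functions is again lower semicontinuous. This yields (nRF2) directly, without any appeal to metrizability.

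The compatibility with normal contractions is where I would invoke Lemma~\ref{lemma:convexity estimate}. Fix a normal contraction $C$ and $f, g \in \cF(X)$; I would prove the required inequality pair by pair. For a fixed $(x,y)$ put $a = f(x) - f(y)$ and $b = g(x) - g(y)$. Since $(Cg)(x) - (Cg)(y) = C(g(x)) - C(g(y))$ and $C$ is $1$-Lipschitz, this difference equals $\lambda b$ for some $\lambda$ with $|\lambda| \le 1$ (the case $b = 0$ forcing the difference to vanish). Lemma~\ref{lemma:convexity estimate}, applied to the convex function $w_{xy}$ on $\R$, then gives
$$ w_{xy}(a + \lambda b) + w_{xy}(a - \lambda b) \le w_{xy}(a + b) + w_{xy}(a - b). $$
The left-hand side is exactly the pair-$(x,y)$ contribution to $\cE_w(f + Cg) + \cE_w(f - Cg)$, and the right-hand side the contribution to $\cE_w(f + g) + \cE_w(f - g)$. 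Summing over all pairs -- unproblematic since every term is nonnegative, so no $\infty - \infty$ arises -- yields (nRF4).

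The argument is short, and the only point requiring a little care is the reduction in the last step: one must observe that the $1$-Lipschitz property of $C$ is precisely what produces the multiplier $|\lambda| \le 1$ needed to apply Lemma~\ref{lemma:convexity estimate}, and that the pairwise inequalities may be added because all quantities are nonnegative. I expect this to be the main (and rather mild) obstacle; the remaining properties are immediate from the sum structure.
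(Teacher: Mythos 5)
Your proof is correct and takes essentially the same route as the paper: the key contraction step is identical (the $1$-Lipschitz property of $C$ produces the multiplier $|\lambda|\leq 1$ for each pair $(x,y)$, Lemma~\ref{lemma:convexity estimate} is applied to $w_{xy}$ pairwise, and the resulting inequalities are summed). The only cosmetic difference is the lower semicontinuity step, where the paper invokes Fatou's lemma while you write $\cE_w$ as a supremum of finite partial sums of lower semicontinuous functionals --- two formulations of the same fact.
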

 \begin{proof}
  Convexity and symmetry are obvious, lower semicontinuity follows from the lower semicontinuity of $w_{xy}$, $x,y \in X$, and Fatou's lemma.
  
  For $x,y \in X$ and $g \in \cF(X)$ we use the notation $\nabla_{xy} g = g(x) - g(y)$. Given a normal contraction  $C \colon \R \to \R$, for $x,y \in X$ we find $|\lambda_{xy}| \leq 1$ with 
  $$\lambda_{xy} \nabla_{xy} g =  \nabla_{xy}  Cg.$$
  Hence, using Lemma~\ref{lemma:convexity estimate}, we obtain 
  \begin{align*}
   &w_{xy} (\nabla_{xy}f  + \nabla_{xy}Cg ) + w_{xy} (\nabla_{xy}f  - \nabla_{xy}Cg)\\
   &= w_{xy} (\nabla_{xy}f  + \lambda_{xy} \nabla_{xy}g ) + w_{xy} (\nabla_{xy}f  - \lambda_{xy} \nabla_{xy}g)\\
   &\leq w_{xy} (\nabla_{xy}f  +  \nabla_{xy}g ) + w_{xy} (\nabla_{xy}f  -\nabla_{xy}g).
  \end{align*}
 Summing up these inequalities yields $\cE_w(f + Cg) + \cE_w(f-Cg) \leq \cE_w(f + g) + \cE_w(f-g)$. 
 \end{proof}

In what follows we use the concepts of Subsection~\ref{subsection:convex functionals}   for the functions in $w$ on $\R$, which is naturally identified with its dual space.   

For the finiteness of the elementary resistance we associate a  graph $G_w$ with vertex set $X$ to the family $w$. More precisely, we say that $x,y \in X$ are  neighbors in $G_w$ and write $x \sim_w y$ if $w_{xy} \neq 0$. Since we assumed the symmetry condition on $w$, the graph is symmetric.  Moreover, it is readily verified that $x \sim_w y$ implies the existence of $t > 0$ such that  $(w_{xy})^*(t) < \infty$ (use that  $w_{xy}^*(t) = \infty$ for all $t > 0$ and $w_{xy}(0) = 0$ imply $w_{xy} = (w_{xy})^{**} = 0$).

\begin{proposition}\label{proposition:finiteness of resistance graph forms}
 \begin{enumerate}[(a)]
  \item   $\cE_w$ is a nonlinear resistance form if and only if $G_w$ is connected.  
  \item If for each $x \in X$ there exists $t > 0$ with $\sum_{y \in X} w_{xy}(t) < \infty$, then  $\cF_c(X) \subset M(\cE_w)$ and $D(\cE_w)$ separates the points of $X$. 
 \end{enumerate}
\end{proposition}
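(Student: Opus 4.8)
The plan is to leverage Proposition~\ref{proposition:graph resistance forms}, which already establishes that $\cE_w$ is convex, symmetric and lower semicontinuous and that all normal contractions operate on it. Thus (nRF1), (nRF2) and (nRF4) of Definition~\ref{definition:nonlinear resistance form} hold unconditionally, and the entire content of part (a) is that the remaining axiom (nRF3), finiteness of the elementary resistance $R(x,y)$ for all $x,y \in X$, is equivalent to connectedness of $G_w$. I would therefore prove the two implications of (a) by analysing $R$ directly, and then treat (b) by an explicit computation of $\cE_w$ on multiples of point masses.

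For the direction ``$G_w$ disconnected $\Rightarrow$ not a resistance form'', I would pick $x,y$ in different connected components of $G_w$ and test $R(x,y)$ with functions that are constant on each component. Since $w_{uv} = 0$ whenever $u,v$ lie in different components, any such $f$ satisfies $\cE_w(f) = 0 \le 1$ while $f(x) - f(y)$ is unconstrained, so $R(x,y) = \infty$ and (nRF3) fails. For the converse I would first reduce to neighbouring vertices: by the triangle inequality for $R$ (Proposition~\ref{proposition: some basic inequalities}(a)) and the existence of a finite path $x = x_0 \sim_w \cdots \sim_w x_n = y$ in the connected graph $G_w$, it suffices to bound $R(x,y)$ for a single edge $x \sim_w y$.

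For such an edge I would use that discarding all but one summand gives $\cE_w(f) \ge w_{xy}(f(x) - f(y))$, so $w_{xy}(f(x) - f(y)) \le 1$ whenever $\cE_w(f) \le 1$. The decisive point --- and the one I expect to be the main obstacle --- is to show that the sublevel set $\{s \in \R \mid w_{xy}(s) \le 1\}$ is bounded. This is precisely where $x \sim_w y$, i.e.\ $w_{xy} \neq 0$, enters: an even convex function vanishing at $0$ that is not identically zero must grow at least linearly, because choosing $s_0 > 0$ with $c := w_{xy}(s_0) > 0$ and using convexity together with $w_{xy}(0) = 0$ yields $w_{xy}(s) \ge (s/s_0)\,c$ for all $s \ge s_0$. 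Hence $\{w_{xy} \le 1\}$ is bounded and $R(x,y) < \infty$; combined with the path reduction this gives finiteness of $R$ everywhere, and with Proposition~\ref{proposition:graph resistance forms} proves (a).

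For part (b) I would use that, $\cE_w$ being symmetric, $M(\cE_w)$ is a vector space, so it is enough to show $\delta_x \in M(\cE_w)$ for each $x$ in order to conclude $\cF_c(X) = \lin \{\delta_x \mid x \in X\} \subset M(\cE_w)$. Since $M(\cE_w) = \{\lambda g \mid \lambda \ge 0,\ g \in D(\cE_w)\}$, this reduces to finding $t > 0$ with $t\delta_x \in D(\cE_w)$. A direct computation, using that $t\delta_x(u) - t\delta_x(v)$ is nonzero only when exactly one of $u,v$ equals $x$ and invoking the symmetry $w_{uv}=w_{vu}=w_{uv}(-\cdot)$, gives $\cE_w(t\delta_x) = 2\sum_{y \in X} w_{xy}(t)$ (the diagonal term contributes $w_{xx}(0)=0$), which is finite for a suitable $t$ by hypothesis. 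This simultaneously settles separation: the function $t\delta_x$ then lies in $D(\cE_w)$ and satisfies $t\delta_x(x) = t \neq 0 = t\delta_x(y)$ for every $y \neq x$, so $D(\cE_w)$ separates the points of $X$.
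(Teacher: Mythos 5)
Your proposal is correct, and its overall architecture coincides with the paper's: invoke Proposition~\ref{proposition:graph resistance forms} for (nRF1), (nRF2), (nRF4); kill the disconnected case with functions constant on components (the paper uses $\alpha 1_W$); reduce the connected case to single edges along a finite path; and prove (b) by the computation $\cE_w(t 1_{\{x\}}) = 2\sum_{y} w_{xy}(t)$. The one place where you genuinely diverge is the decisive per-edge bound. The paper stays inside its duality framework: from $w_{xy} \neq 0$ it extracts $t > 0$ with $(w_{xy})^*(t) < \infty$ (via Fenchel--Moreau, as noted just before the proposition) and then applies the fundamental inequality of Proposition~\ref{proposition:fundamental inequalities}(b) to the scalar function $w_{xy}$, obtaining $|f(x)-f(y)| \leq \av{t}_{O,(w_{xy})^*}/t$ on the set $\{\cE_w \leq 1\}$. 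You instead argue elementarily that a nonzero even convex function vanishing at $0$ grows at least linearly past any point where it is positive, so its $1$-sublevel set is bounded. The two arguments encode the same fact (linear lower bounds are dual to boundedness of sublevel sets), but yours is self-contained and avoids the conjugate entirely, while the paper's has the advantage of exercising the Orlicz--Luxemburg machinery it built in Section~2. One cosmetic point in your version: the value $c = w_{xy}(s_0)$ may be $+\infty$, in which case the sublevel set is trivially contained in $[-s_0,s_0]$; this costs nothing but deserves a word. In part (b) you are in fact slightly more careful than the paper: the diagonal term is $w_{xx}(0)=0$, not $w_{xx}(t)$, and you spell out the passage from $1_{\{x\}} \in M(\cE_w)$ to $\cF_c(X) \subset M(\cE_w)$ via the vector-space structure of $M(\cE_w)$ for symmetric forms, as well as the separation statement, both of which the paper leaves implicit.
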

\begin{proof}
 (a): All properties except the finiteness of the elementary resistance have been discussed above.

 Assume that $G_w$ is connected. Let $x,y \in X$ and assume that $\cE_w(f) \leq 1$. If $x \sim_w y$, then $w_{xy} \neq 0$. Hence, there exists $t > 0$ with $(w_{xy})^*(t) < \infty$. As discussed in Proposition~\ref{proposition:fundamental inequalities}, we have 
 $$t (f(x) - f(y)) \leq \av{t}_{O,(w_{xy})^*}\av{f(x) - f(y)}_{L,w_{xy}}$$
and  $\av{t}_{O,(w_{xy})^*} < \infty$. The assumption $\cE_w(f) \leq 1$ leads to $w_{xy}(f(x) - f(y)) \leq 1$, which implies $\av{f(x) - f(y)}_{L,w_{xy}} \leq 1$. Using also the symmetry of $\cE_w$, for $0 \leq C_{xy} = \av{t}_{O,(w_{xy})^*}/t < \infty$ we find
$$|f(x) - f(y)| \leq C_{xy}.$$

If $x$ and $y$ are not neighbors in $G_w$, then we choose a path $x = x_0 \sim_w x_1 \sim_w \ldots \sim_w x_n = y$ and  constants $0 \leq C_{x_ix_{i-1}} < \infty$ as above, to obtain 
$$|f(y) - f(x)| \leq \sum_{i = 1}^n |f(x_i) - f(x_{i-1})| \leq \sum_{i = 1}^n C_{x_ix_{i-1}}. $$
Since $f \in \cF(X)$ with $\cE_w(f) \leq 1$ was arbitrary and the constants are independent of $f$, this shows $R(x,y)  < \infty$. 

Now assume that $G_w$ is not connected and let $\emptyset \neq W \neq X$ be a connected component of $G_w$. If $w_{xy} \neq 0$, then either $x,y \in W$ or $x,y \in X \setminus W$. In other words, $w_{xy} \neq 0$ implies $1_W(x) = 1_W(y)$ and for   $\alpha > 0$ we obtain $\cE_w( \alpha 1_W) = 0$. For $x \in W$, $y \in X \setminus W$ and $\alpha > 0$, this yields
$$R(x,y) \geq |\alpha 1_W(x) - \alpha 1_W(y)| = \alpha, $$
showing $R(x,y) = \infty$. 

(b): Using the symmetry of $w_{xy}(t)$ (in $x,y \in X$ and $t \in \R$), for $z \in X$ and $t > 0$ we obtain
$$\cE_w(t 1_{\{z\}}) = \sum_{x,y \in X} w_{xy}(t(1_{\{z\}}(x) - 1_{\{z\}}(y))) =  2 \sum_{y \in X} w_{zy}(t).$$
Hence, the assumption in (b) implies $t 1_{\{z\}} \in D(\cE_w)$ for some $t > 0$, i.e., $1_{\{z\}} \in M(\cE_w)$.
\end{proof}

\begin{remark}
 Non-quadratic energies of this type (under some stronger conditions on $w$) have been studied before, see e.g. \cite{Soa93,Soa93a,Soa94,LS96,Kas16,Kas20} and references therein. The idea to use Luxemburg seminorms in this context can also be found in these references.  However, their focus is quite different and we are not aware of a good definition of the resistance metric let alone its additivity over serial circuits in this case. Also the relation of lower semicontinuity and completeness of the modular space is not discussed there.  
 
 The most important example is certainly the $p$-energy of a weighted graph (Example~\ref{example:p energy}), which is closely related to the discrete $p$-Laplacian.
\end{remark}


\subsection{Hypergraph forms}

As in the previous section, we assume that $X$ is at most countable. We let  $\mathcal P_c(X) =  \{K \subset X \mid K \text{ finite}\}$  and assume that we are given $\mu  \colon \mathcal P_c(X) \to [0,\infty)$. We define the corresponding {\em hypergraph form} by 
$$\cE_\mu \colon \cF(X) \to [0,\infty], \quad \cE_\mu(f) = \sum_{K \in \mathcal P_c(X)} \mu(K) (\max_{x \in K} f(x) - \min_{x \in K} f(x))^2.$$
With essentially the same arguments as in the proof of Proposition~\ref{proposition:graph resistance forms}, it is readily verified that $\cE_\mu$ is a lower semicontinuous $2$-homogeneous convex functional and that all normal contractions operate on $\cE_\mu$.  

Similar to the previous subsection, we introduce a graph $G_\mu$ with vertex set $X$ to study finiteness of the elementary resistance. We say that $x,y \in X$ with $x \neq y$ are neighbors in $G_\mu$ and write $x \sim_\mu y$ if there exists $K \in \mathcal P_c(X)$ with $\mu(K) > 0$ and $x,y \in K$. 

\begin{proposition} \label{proposition:hypergraph form}
\begin{enumerate}[(a)]
 \item $\cE_\mu$ is a resistance form if and only if $G_\mu$ is connected.
 \item If for each $x \in X$ we have
 $$\sum_{W \in \mathcal P_c(X),\, x \in W} \mu(W) < \infty,$$
 then $\cF_c(X) \subset D(\cE_\mu) = M(\cE_\mu)$.
\end{enumerate}
 
\end{proposition}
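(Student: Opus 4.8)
The plan is to follow the blueprint of Proposition~\ref{proposition:finiteness of resistance graph forms} almost verbatim, since the paragraph preceding the proposition already records that $\cE_\mu$ is convex, symmetric, lower semicontinuous and compatible with every normal contraction. Hence for part (a) the only axiom of a nonlinear resistance form still to be verified is (nRF3), finiteness of the elementary resistance, and the entire content of (a) is to show that $R(x,y) < \infty$ for all $x,y \in X$ precisely when $G_\mu$ is connected. For the ``if'' direction I would first control the oscillation of a test function across a single hyperedge: if $\cE_\mu(f) \le 1$ and $x \sim_\mu y$, choose $K \in \mathcal{P}_c(X)$ with $\mu(K) > 0$ and $x,y \in K$; then the single summand $\mu(K)(\max_{z \in K} f(z) - \min_{z \in K} f(z))^2$ is dominated by $\cE_\mu(f) \le 1$, which forces $|f(x) - f(y)| \le \mu(K)^{-1/2} =: C_{xy} < \infty$, a bound independent of $f$. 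For arbitrary $x,y$, connectedness of $G_\mu$ supplies a path $x = x_0 \sim_\mu \cdots \sim_\mu x_n = y$, and the triangle inequality gives $|f(x) - f(y)| \le \sum_{i=1}^n C_{x_i x_{i-1}}$; taking the supremum over all $f$ with $\cE_\mu(f) \le 1$ yields $R(x,y) < \infty$.

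For the ``only if'' direction I would argue by contraposition. If $W$ is a connected component of $G_\mu$ with $\emptyset \neq W \neq X$, the key observation is that $\mu(K) > 0$ forces $K$ to lie inside a single component: any two distinct points of such a $K$ are neighbors, so $K$ cannot meet both $W$ and its complement. Consequently $1_W$ is constant on every $K$ with $\mu(K) > 0$, so $\max_{z \in K} 1_W(z) = \min_{z \in K} 1_W(z)$ and $\cE_\mu(\alpha 1_W) = 0$ for all $\alpha > 0$. Testing with these functions, for $x \in W$ and $y \in X \setminus W$ we obtain $R(x,y) \ge \alpha |1_W(x) - 1_W(y)| = \alpha$ for every $\alpha > 0$, whence $R(x,y) = \infty$ and $\cE_\mu$ fails to be a resistance form.

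For part (b) I would use that $\cE_\mu$ is $2$-homogeneous and therefore satisfies the $\Delta_2$-condition, which gives $D(\cE_\mu) = M(\cE_\mu)$; symmetry makes this common set a vector space, so it suffices to place each indicator $1_{\{z\}}$ in $D(\cE_\mu)$. A direct computation shows that the summand indexed by $K$ vanishes unless $z \in K$ and $K \neq \{z\}$, whence $\cE_\mu(1_{\{z\}}) = \sum_{K \ni z,\, K \neq \{z\}} \mu(K) \le \sum_{W \in \mathcal{P}_c(X),\, z \in W} \mu(W) < \infty$ by the standing hypothesis, so that $1_{\{z\}} \in D(\cE_\mu)$ and therefore $\cF_c(X) \subset D(\cE_\mu) = M(\cE_\mu)$.

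I expect the only genuinely new point beyond the graph case to be the structural remark used in the disconnected direction, namely that a hyperedge of positive weight is contained in a single connected component because its vertices are pairwise adjacent; everything else is a routine transcription of the argument for $\cE_w$.
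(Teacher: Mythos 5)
Your proposal is correct and follows essentially the same route as the paper: reduce (a) to finiteness of the elementary resistance, bound the oscillation across a single hyperedge by $\mu(K)^{-1/2}$, chain along a path, and use $\alpha 1_W$ for a connected component $W$ to get $R(x,y)=\infty$ in the disconnected case; for (b), compute $\cE_\mu(1_{\{z\}})$. The only differences are cosmetic refinements in your favor: you bound the single summand directly by $\cE_\mu(f)\le 1$ instead of invoking $\av{\cdot}_L=\cE_\mu^{1/2}$, you spell out the observation (which the paper only delegates to the graph-form proof) that a hyperedge of positive weight has pairwise adjacent vertices and hence lies in one component, and you note that the $K=\{z\}$ term vanishes in the computation for (b).
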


\begin{proof}
(a): It suffices to study the finiteness of the elementary resistance. 

Assume that $G_\mu$ is connected.  If $x \sim_\mu y$, then there exists $K \in \mathcal P_c(X)$ such that $x,y \in K$. Then
$$|f(x) - f(y)|^2 \leq (\max_{x \in K} f(x) - \min_{x \in K} f(x))^2$$
and, using $\av{\cdot}_L = \cE_\mu^{1/2}$, we infer 
$$|f(x) - f(y)| \leq \frac{1}{\sqrt{\mu(K)}}\av{f}_L. $$
If $x$ and $y$ are not neighbors, we can chain this inequality along a finite path to obtain the finiteness of $R$. 

As in the proof of Proposition~\ref{proposition:finiteness of resistance graph forms} we can show $R(x,y) = \infty$ if $x$ and $y$ belong to different connected components of $G_\mu$. Hence, $\cE_\mu$ is not a resistance form if $G_\mu$ is not connected. 

(b): This follows directly from 
$$\cE_\mu(1_{\{x\}}) = \sum_{W \in \mathcal P_c(X), x \in W} \mu(W).$$
\end{proof}

\begin{remark}
 The hypergraph form is the convex functional whose subgradient equals the multi-valued hypergraph Laplacian (when considered on the Hilbert space $\ell^2(X)$ instead of the whole space $\cF(X)$). Both objects have recently been studied in the context of discrete curvature and nonlinear heat flows, see e.g. \cite{HG1,HG2,HG3,HG4,HG5} and references therein.  
\end{remark}

%
%

%

\subsection{$p$-resistance forms} \label{subsection:p-resistance forms}

In \cite{KS25} so-called $p$-resistance forms are introduced. We show that these are nonlinear resistance forms in the sense of our definition. 

We start with repeating the definition of $p$-resistance forms in our slightly different notation.  For $1 \leq q \leq \infty$ we let $|\cdot|_q$ be the $\ell^q$-norm on $\R^d$, i.e., 
$|x|_q =  (\sum_{i = 1}^d |x_i|^q)^{1/q}$ if $q < \infty$ and $|x|_\infty = \max_i |x_i|$. Moreover, given any functional $\Phi \colon \cF(X) \to (-\infty,\infty]$ and $f = (f_1,\ldots,f_d) \in D(\Phi)^d$  we simply write $\Phi(f)$ for the vector $\Phi(f) = (\Phi(f_1),\ldots,\Phi(f_d)) \in \R^d.$

\begin{definition}[Generalized $p$-contraction property]
We say that a functional $\cE \colon \cF(X) \to [0,\infty]$ satisfies the {\em generalized $p$-contraction property}, if for all $1 \leq q_1 \leq p \leq q_2 \leq \infty$, all $d_1,d_2 \in \N$, all $T \colon \R^{d_1} \to \R^{d_2}$ with $T(0) = 0$ and $|T(x)-T(y)|_{q_2} \leq |x-y|_{q_1}$, $x,y \in \R^{d_1}$, and all $f \in D(\cE)^{d_1}$ we have $T(f) \in D(\cE)^{d_2}$ and 
$$|\cE^{1/p}(T(f))|_{q_2} \leq |\cE^{1/p}(f)|_{q_1}. $$
\end{definition}

\begin{definition}[$p$-resistance form according to \cite{KS25}] \label{definition:p resistance form}
 Let $1 < p < \infty$. A functional $\cE \colon \cF(X) \to [0,\infty]$ is called a {\em $p$-resistance form} if the following are satisfied. 
 \begin{enumerate}[(RF1)$_p$]
  \item   $\ker \cE = \R \cdot 1$  and $\cE^{1/p}$ is a seminorm on the vector space $D(\cE)$.
  \item The quotient space $(D(\cE)/ \R \cdot 1, \cE^{1/p})$ is a Banach space. 
  \item For all $x,y \in X$ with $x \neq y$ there exists $f \in D(\cE)$ with $f(x) \neq f(y)$. 
  \item For all $x,y \in X$ the elementary resistance of $\cE$ satisfies $R(x,y) < \infty$. 
  \item $\cE$ satisfies the generalized $p$-contraction property.
 \end{enumerate}
\end{definition}

\begin{remark}
 The quantity that is denoted by $R_\cE$ in \cite{KS25} equals $R^p$ in our notation. Hence, the formulation of (RF4)$_p$ in \cite{KS25} is slightly different but equivalent to the above. 
\end{remark}

\begin{lemma}\label{lemma:contraction property for p resistance}
 If $\cE$ satisfies the generalized $p$-contraction property, then all normal contractions  operate on $\cE$. 
\end{lemma}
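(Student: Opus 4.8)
The plan is to read off the compatibility of $\cE$ with a normal contraction $C$ as a single instance of the generalized $p$-contraction property, taken with the simplest admissible parameters $d_1 = d_2 = 2$ and $q_1 = q_2 = p$ (which satisfy $1 \le q_1 \le p \le q_2 \le \infty$). Fix a normal contraction $C$ and $f, g \in \cF(X)$. If $f + g \notin D(\cE)$ or $f - g \notin D(\cE)$, then the right-hand side of the target inequality
$$\cE(f + Cg) + \cE(f - Cg) \leq \cE(f + g) + \cE(f - g)$$
is $\infty$ and there is nothing to prove, so I may assume $(f + g, f - g) \in D(\cE)^2$.

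The key construction is the map $T \colon \R^2 \to \R^2$ given by
$$T(a, b) = \left(\tfrac{a+b}{2} + C\!\left(\tfrac{a-b}{2}\right),\ \tfrac{a+b}{2} - C\!\left(\tfrac{a-b}{2}\right)\right),$$
which satisfies $T(0) = 0$ because $C(0) = 0$. This $T$ is designed so that applying it pointwise to the pair $(f+g, f-g)$ produces $\frac{(f+g)+(f-g)}{2} = f$ and $\frac{(f+g)-(f-g)}{2} = g$ in its two slots, whence $T(f+g, f-g) = (f + Cg, f - Cg)$. Consequently the conclusion of the generalized $p$-contraction property reads
$$\left(\cE(f+Cg) + \cE(f - Cg)\right)^{1/p} = |\cE^{1/p}(T(f+g,f-g))|_p \leq |\cE^{1/p}(f+g,f-g)|_p = \left(\cE(f+g) + \cE(f-g)\right)^{1/p},$$
which after raising to the $p$-th power is exactly what is claimed; it also supplies $f + Cg, f - Cg \in D(\cE)$ automatically.

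The one hypothesis that still requires verification — and which I expect to be the only real work — is that $T$ is a contraction from $(\R^2, |\cdot|_p)$ to $(\R^2, |\cdot|_p)$. For $x = (a,b)$ and $y = (a', b')$ I would set $\sigma = \frac{(a+b)-(a'+b')}{2}$ and $\eta = \frac{(a-b)-(a'-b')}{2}$, so that $a - a' = \sigma + \eta$ and $b - b' = \sigma - \eta$, giving $|x - y|_p^p = |\sigma + \eta|^p + |\sigma - \eta|^p$. Since $C$ is $1$-Lipschitz, the quantity $\delta := C\!\left(\frac{a-b}{2}\right) - C\!\left(\frac{a'-b'}{2}\right)$ satisfies $|\delta| \leq |\eta|$, while the two coordinates of $T(x) - T(y)$ are $\sigma + \delta$ and $\sigma - \delta$, so $|T(x) - T(y)|_p^p = |\sigma + \delta|^p + |\sigma - \delta|^p$. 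The desired bound $|\sigma + \delta|^p + |\sigma - \delta|^p \leq |\sigma + \eta|^p + |\sigma - \eta|^p$ is then precisely Lemma~\ref{lemma:convexity estimate} applied to the convex function $t \mapsto |t|^p$ with $\lambda = \delta/\eta$ (the degenerate case $\eta = 0$, which forces $\delta = 0$, being trivial). This establishes $|T(x) - T(y)|_p \leq |x - y|_p$, after which the generalized $p$-contraction property applies verbatim and the proof concludes.
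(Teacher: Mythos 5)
Your proposal is correct and takes essentially the same approach as the paper's proof: the same parameters $d_1 = d_2 = 2$, $q_1 = q_2 = p$, the identical map $T$, and the same verification of its $\ell^p$-contractivity by combining the $1$-Lipschitz property of $C$ with Lemma~\ref{lemma:convexity estimate} applied to $t \mapsto |t|^p$. The only (harmless) differences are presentational: you make explicit the trivial case $f \pm g \notin D(\cE)$ and the degenerate case $\eta = 0$, which the paper handles implicitly by phrasing the Lipschitz property as $C(t)-C(s) = \lambda(t-s)$ with $|\lambda|\leq 1$.
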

\begin{proof}
 Let $C \colon \R \to \R$ be a normal contraction. We want to apply the generalized $p$-contraction property for the parameters $d_1 = d_2 = 2$ and $q_1 = q_2 = p$ and the map 
 $$T \colon \R^2 \to \R^2, \quad T(w,z) = (\frac{w+z}{2} + C(\frac{w-z}{2}),\frac{w+z}{2} - C(\frac{w-z}{2})).  $$
 Clearly,  $T(0) =  0$ and so we need to verify $|T(x) - T(y)|_p \leq |x-y|_p$ for all $x,y \in \R^2$. 
 
Since $C$ is a normal contraction, for all $t,s \in \R$ there exists $\lambda \in  \R$ with $|\lambda| \leq 1$ such that $C(t) - C(s) = \lambda (t-s)$. Hence, for $x = (x_1,x_2),y=(y_1,y_2) \in \R^2$,  Lemma~\ref{lemma:convexity estimate} implies
 \begin{align*}
  |T(x) - T(y)|_p^p &= \left|\frac{x_1+x_2}{2} - \frac{y_1+y_2}{2} + \left(C(\frac{x_1-x_2}{2}) - C(\frac{y_1-y_2}{2})\right)\right|^p\\
  & \,\, \,+ \left|\frac{x_1+x_2}{2} - \frac{y_1+y_2}{2} - \left(C(\frac{x_1-x_2}{2})  - C(\frac{y_1-y_2}{2})\right)\right|^p\\
  &\leq  \left|\frac{x_1+x_2}{2} - \frac{y_1+y_2}{2} + \left(\frac{x_1-x_2}{2} - \frac{y_1-y_2}{2}\right)\right|^p\\
  & \,\, \,+ \left|\frac{x_1+x_2}{2} - \frac{y_1+y_2}{2} - \left(\frac{x_1-x_2}{2}  - \frac{y_1-y_2}{2}\right)\right|^p\\
  &= |x-y|_p^p.
 \end{align*}
 For $f,g \in \cF(X)$ with $f \pm g \in D(\cE)$, the generalized $p$-contraction property implies (after taking $p$-th powers on both sides of the inequality) 
 \begin{align*}
  \cE(f + Cg) + \cE(f - Cg) &= \cE(T_1(f + g,f-g)) + \cE(T_2(f + g,f-g))\\
  &= |\cE^{1/p}(T(f+g,f-g))|_p^p\\
  &\leq |\cE^{1/p}(f+g,f-g)|_p^p\\
  &= \cE(f+g) + \cE(f-g). \hfill \qedhere
 \end{align*}
\end{proof}

\begin{proposition}\label{proposition:p resistance}
For $1 < p < \infty$ any $p$-resistance form in the sense of \cite{KS25} is a nonlinear resistance form.  
\end{proposition}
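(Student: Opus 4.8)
The plan is to verify the four defining properties (nRF1)--(nRF4) of a nonlinear resistance form from Definition~\ref{definition:nonlinear resistance form} one at a time, three of which are essentially immediate from the axioms (RF1)$_p$--(RF5)$_p$. By (RF1)$_p$ the functional $\cE^{1/p}$ is a seminorm on the vector space $D(\cE)$; in particular $\cE$ is positively $p$-homogeneous and symmetric with $\cE(0) = 0$, and writing $\cE = (\cE^{1/p})^p$ as the composition of the convex seminorm $\cE^{1/p}$ with the convex nondecreasing map $t \mapsto t^p$ on $[0,\infty)$ shows that $\cE$ is convex; this gives (nRF1). Property (nRF4), that all normal contractions operate on $\cE$, is exactly the content of Lemma~\ref{lemma:contraction property for p resistance} applied to the generalized $p$-contraction property (RF5)$_p$. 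Finally, (nRF3) is literally (RF4)$_p$. Thus only the lower semicontinuity (nRF2) requires genuine work.

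For (nRF2) I would invoke Theorem~\ref{theorem:charcterization lower semicontinuity constant kernel}. Since $\cE$ is positively $p$-homogeneous, Proposition~\ref{prop:seminorms for homogenous functionals} gives $M(\cE) = D(\cE)$ and $\av{\cdot}_L = \cE^{1/p}$ on $M(\cE)$; hence $\ker \av{\cdot}_L = \ker \cE = \R \cdot 1$ by (RF1)$_p$, and $R(x,y) < \infty$ for all $x,y$ by (RF4)$_p$. The functional $\cE$ is symmetric, and it is left-continuous because $\lim_{\lambda \to 1-} \cE(\lambda f) = \lim_{\lambda \to 1-} \lambda^p \cE(f) = \cE(f)$. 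Moreover $(M(\cE)/\R\cdot 1, \av{\cdot}_L) = (D(\cE)/\R \cdot 1, \cE^{1/p})$ is a Banach space by (RF2)$_p$. Therefore, once the \emph{reflexivity} of $\cE$ is known, condition (ii) of Theorem~\ref{theorem:charcterization lower semicontinuity constant kernel} holds and the theorem yields the lower semicontinuity of $\cE$.

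The one point that is not formal, and the main obstacle, is the reflexivity of $\cE$, i.e.\ the reflexivity of the Banach space $(D(\cE)/\R\cdot 1, \cE^{1/p})$. Here I would extract a uniform convexity estimate from the generalized $p$-contraction property. Concretely, apply (RF5)$_p$ to the linear map $T \colon \R^2 \to \R^2$, $T(u,v) = 2^{1/p}(\tfrac{u+v}{2}, \tfrac{u-v}{2})$, which satisfies $T(0) = 0$, taking $q_1 = q_2 = p$ when $p \geq 2$ and $q_1 = p$, $q_2 = q$ when $1 < p \leq 2$; in each case the required contractivity $|T(x) - T(y)|_{q_2} \leq |x-y|_{q_1}$ is precisely the classical scalar Clarkson inequality. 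The resulting conclusion $|\cE^{1/p}(T(f,g))|_{q_2} \leq |\cE^{1/p}(f,g)|_{q_1}$ becomes, after using the homogeneity of $\cE^{1/p}$, Clarkson's inequality for $\cE^{1/p}$, that is $\cE(\tfrac{f+g}{2}) + \cE(\tfrac{f-g}{2}) \leq \tfrac12(\cE(f)+\cE(g))$ when $p\ge 2$ and the dual form when $1<p\le 2$. This inequality passes to the quotient $D(\cE)/\R\cdot 1$ and shows that its norm is uniformly convex; since the quotient is complete by (RF2)$_p$, the Milman--Pettis theorem gives reflexivity, so the canonical embedding into the bidual is surjective and in particular has dense image, which is the reflexivity required above.

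Putting the pieces together, (nRF1)--(nRF4) all hold and $\cE$ is a nonlinear resistance form. I expect the derivation of the Clarkson inequality, together with the bookkeeping of the two ranges $p \leq 2$ and $p \geq 2$, to be the only delicate step; everything else reduces to reading off $p$-homogeneity and the already established structural results on homogeneous functionals and on lower semicontinuity.
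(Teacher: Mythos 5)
Your proposal is correct and follows essentially the same route as the paper: (nRF1), (nRF3) and (nRF4) are read off directly from (RF1)$_p$, (RF4)$_p$ and Lemma~\ref{lemma:contraction property for p resistance}, and (nRF2) is deduced from Theorem~\ref{theorem:charcterization lower semicontinuity constant kernel} after identifying $\av{\cdot}_L = \cE^{1/p}$ and establishing reflexivity of $(D(\cE)/\R\cdot 1, \cE^{1/p})$. The only difference is that the paper simply cites \cite{KS25} (Propositions 2.3 and 6.4 there) for this reflexivity, whereas you derive the Clarkson inequalities directly from the generalized $p$-contraction property via the explicit linear map $T(u,v) = 2^{1/p}\bigl(\tfrac{u+v}{2},\tfrac{u-v}{2}\bigr)$ and then apply Milman--Pettis --- a self-contained rendering of the same argument that the cited results carry out.
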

\begin{proof}
 (nRF1): This follows from $\cE^{1/p}$ being a seminorm (RF1)$_p$. 
 
 (nRF3): This is assumption (RF4)$_p$.
 
 (nRF4): This was proven in Lemma~\ref{lemma:contraction property for p resistance}.
 
 (nRF2): Since $\cE^{1/p}$ is a seminorm, $\cE$ is symmetric and positively $p$-homogeneous. Hence, the Luxemburg seminorm of $\cE$ equals $\cE^{1/p}$. According to \cite[Proposition~6.4]{KS25} the Banach space  $(D(\cE)/ \R \cdot 1, \cE^{1/p})$ is reflexive (this follows from Clarkson-type inequalities, which are a consequence of the generalized $p$-contraction property, see  \cite[Proposition 2.3]{KS25}). Hence, in our terminology $\cE$ is reflexive and the lower semicontinuity of $\cE$ follows from Theorem~\ref{theorem:charcterization lower semicontinuity constant kernel}.
\end{proof}

\begin{remark}
\begin{enumerate}[(a)]
 \item Under the summability condition of Proposition~\ref{proposition:hypergraph form}, it can be verified that the hypergraph form $\cE_\mu$ discussed in the previous subsection is a $2$-resistance form.
 \item Since $p$-resistance forms are nonlinear resistance forms, we refer to \cite{KS25} for more examples. 
\end{enumerate}

\end{remark}

\bibliographystyle{plain}
 
\bibliography{literatur}

\end{document}